\definecolor{darkgreen}{rgb}{0,0.45,0}
\crefname{equation}{}{}
\crefname{thm}{Theorem}{Theorems}
\crefname{defi}{Definition}{Definitions}
\crefname{rmk}{Remark}{Remarks}
\crefname{prop}{Proposition}{Propositions}
\crefname{ex}{Example}{Examples}
\theoremstyle{plain}
\newtheorem{thm}{Theorem}[section]
\newtheorem{cor}[thm]{Corollary}
\newtheorem{lem}[thm]{Lemma}
\newtheorem{prop}[thm]{Proposition}
\theoremstyle{remark}
\newtheorem{rmk}[thm]{Remark}
\newtheorem{ex}[thm]{Example}
\theoremstyle{definition}
\newtheorem{defi}[thm]{Definition}
\tikzstyle{start}=[to path={(\tikztostart.#1) -- (\tikztotarget)}]
\tikzstyle{end}=[to path={(\tikztostart) -- (\tikztotarget.#1)}]
\newcommand{\ca}{\mathcal}
\newcommand{\nc}{\mathsf}
\newcommand{\Set}{\nc{Set}}
\newcommand{\Pos}{\nc{Pos}}
\tikzset{tick/.style={postaction={decorate,decoration={markings,mark=at position 0.5 with {\draw[-] (0,.5ex) -- (0,-.5ex);}}}}}
\tikzset{bul/.style={postaction={decoration={markings,mark=at position 0.5 with {\node{$\sbul$};}},decorate}}}
\tikzset{tick/.style={postaction={decorate,decoration={markings,mark=at position 0.5 with {\draw[-] (0,.4ex) -- (0,-.4ex);}}}}}
\newcommand{\vleq}{\mathrel{\rotatebox{270}{$\leq$}}}
\begin{document}
\title{Poset-enriched categories and free exact completions}
	
\author{Vasileios Aravantinos-Sotiropoulos}

\address{School of Applied Mathematical and Physical Sciences, National Technical University of Athens, Greece}
\email{v\_aravantinos@mail.ntua.gr}

\begin{abstract}
	We give an elementary construction of the exact completion of a weakly lex category for categories enriched in the cartesian closed category $\mathsf{Pos}$ of partially ordered sets. Paralleling the ordinary case, we characterize categories which arise as such completions in terms of projective objects. We then apply the results to categories of Eilenberg-Moore algebras for enriched monads on $\mathsf{Pos}$. In particular, we show that every variety of ordered algebras is the exact completion of a subcategory on certain free algebras, thereby answering a question of A. Kurz and J. Velebil.
\end{abstract}

\maketitle

\section{Introduction}

The notion of regularity and the stronger Barr-exactness are by now ubiquitous in the literature on Category Theory and have found many applications in different areas such as Algebra and Logic. First considered in \cite{barr1971exact}, exactness can be thought of as the non-additive part of the axioms defining an abelian category. This manifests in Tierney's theorem, which states that a category is abelian if and only if it is both exact and additive.  However, there are at least a few perspectives from which these notions can be approached.

From one perspective, a finitely complete category is regular when it has quotients for kernel pairs and these behave well, in the sense that they are stable under pullback. From another, regularity is the existence of stable factorizations of morphisms into a regular epimorphism followed by a monomorphism, while a third states that regular categories are those which afford a useful calculus of internal relations. Exact categories are then those which are regular and in which, in addition, every equivalence relation is \emph{effective}, i.e. a kernel pair of some morphism.

There are many interesting contexts which can be described as a regular or exact category satisfying some additional axioms. We mentioned abelian categories above, but exactness holds in elementary toposes as well, which save for this are of a quite different nature than the former. It is also present in the varieties of Universal Algebra (in fact, in all categories \emph{monadic} over $\nc{Set}$), while regularity holds more generally in quasi-varieties. Actually, the distinction between the two algebraic notions corresponds precisely to the difference between the two categorical ones.

Given the significance of regular and exact categories, it is natural to ask whether a category lacking these properties can be ``completed'' to one which does not. The first such consideration is due to A. Carboni \& R. Celia-Magno \cite{CarboniCelia-Magno}, who gave a construction of the \emph{free} exact category $\ca{C}_{\rm{ex}}$ on a left exact category $\ca{C}$. Here ``free'' has its usual meaning, i.e. that it provides a left adjoint to a certain forgetful functor. In particular, this means that there is an exact category $\ca{C}_{\rm{ex}}$ together with a lex embedding $\ca{C}\hookrightarrow\ca{C}_{\rm{ex}}$ such that there is an equivalence
\begin{displaymath}
	\nc{Lex}(\ca{C},\ca{E})\simeq\nc{Reg}(\ca{C}_{\rm{ex}},\ca{E})
\end{displaymath}
between lex functors $\ca{C}\to\ca{E}$ and regular functors $\ca{C}_{\rm{ex}}\to\ca{E}$, for any exact category $\ca{E}$. 

It was later realized by Carboni that the construction of the exact completion $\ca{C}_{\rm{ex}}$ can still be carried out even if $\ca{C}$ is merely \emph{weakly} lex, which is to say that it only possesses \emph{weak} finite limits. Then Carboni \& Vitale \cite{Carboni-Vitale} produced constructions for both the regular and exact completion of a weakly lex category together with the associated universal properties. What is noteworthy here is that these regular and exact completions of a weakly lex $\ca{C}$ are no longer part of left adjoints to the obvious forgetful functors. In particular, it is not weakly lex functors (=those preserving weak finite limits) $\ca{C}\to\ca{E}$ into exact categories $\ca{E}$ that are involved in the universal property of $\ca{C}_{\rm{ex}}$. Rather, it is those functors which E. Vitale discovered and termed \emph{left covering}\cite{VitaleThesis}, a property which has since been identified as a form of flatness (see \cite{NotionsOfFlatness,tendas2024flatness}).

\vspace{0.3cm}

On the other hand, in a more recent paper \cite{Kurz-Velebil}, A. Kurz and J. Velebil considered for the first time notions of regularity and exactness specifically suited to the context of categories enriched over the cartesian monoidal category $\nc{Pos}$ of partially ordered sets and monotone functions. The main result therein is a pair of characterization theorems for quasivarieties and varieties respectively of ordered algebras in the sense of S. Bloom \& J. Wright \cite{BloomWright}. One aspect of these results which is remarkable is how they precisely mirror known results concerning ordinary varieties of algebras. More precisely, varieties of ordered algebras can be characterized as exact $\nc{Pos}$-categories which have a suitably nice generator, while for quasivarieties the corresponding result simply replaces exactness with regularity. In other words, the characterization of ordered (quasi-)varieties reads identical to the one of ordinary (quasi-)varieties, as long as regularity and exactness are interpreted in the appropriate enriched sense.

The work of Kurz \& Velebil has been the starting point for extensive research into the realm of $\nc{Pos}$-enriched categories in recent years, as for example in \cite{Adamek_Dostal_Velebil_2022,Adamek2023varieties,Adamek2024TAC,OrdMaltsev}. In particular, in \cite{MyPosExReg} we used a suitable calculus of relations to construct the exact completion of a regular $\nc{Pos}$-enriched category. The latter gives an enriched analogue of the ordinary ex/reg completion, which was initially suggested by Lawvere \cite{PerugiaNotes} and first described in detail by R. Succi Cruciani \cite{SucciCruciani}. However, (regular and) exact completions of weakly lex categories have not yet been considered in this context. In fact, one of the three open questions in \cite{Kurz-Velebil} concerns the existence of such completions and their connection to (quasi-)varieties of ordered algebras.

The main purpose of this paper is to attempt to provide an answer to this question. By adapting the construction of Carboni-Vitale from the ordinary setting, we give a construction of the exact completion $\ca{C}_{\rm{ex}}$ of any weakly lex $\nc{Pos}$-category $\ca{C}$. Here the notion of ``weakly lex'' category can be seen as an instance of a general framework for weak notions described by Lack \& Ros\'icky \cite{LackRosickyWeakness}. The universal property of $\ca{C}_{\rm{ex}}$ again involves left covering functors, where the latter is interpreted in the obvious enriched sense. In addition, we characterize free exact categories. What is once more noteworthy is how the results precisely mimic those for ordinary categories. In particular, every exact $\nc{Pos}$-category with enough projectives is the exact completion of any of its projective covers. Finally, we observe that in every variety of ordered algebras there is a projective cover given by those algebras which are free on a discrete poset. Thus, every variety is the exact completion of the full subcategory on such free algebras. In fact, the same holds more generally for categories $\nc{Pos}^{\mathbb{T}}$ for a sufficiently well behaved monad $\mathbb{T}$.

\vspace{0.3cm}

\emph{Outline of the paper}

In Section 2 we quickly record some preliminaries on $\nc{Pos}$-categories which will be required in the rest of the paper. For the most part, we do not go into much detail, so the reader should consult the indicated references if need be.

In Section 3, starting with a weakly lex $\nc{Pos}$-category $\ca{C}$, we describe a construction of the exact completion $\ca{C}_{\rm{ex}}$ and prove that it indeed produces an exact category. We also describe the canonical functor $\ca{C}\to\ca{C}_{\rm{ex}}$ and make a couple of observations on the manner in which $\ca{C}$ sits inside $\ca{C}_{\rm{ex}}$.

The universal property of the completion is established in Section 4, which also contains the characterization of free exact categories via projectives. The latter is applied in Section 5 to show that varieties of ordered algebras are exact completions and more generally to deduce a type of monadicity theorem over $\nc{Pos}$.

\vspace{0.3cm}

\emph{Acknowledgements}

The author would like to thank the members of the Category Theory group of Universit\'e Catholique de Louvain for listening to a talk on the subject of this paper and providing useful feedback.

This paper draws from and owes its existence to the work of Enrico Vitale and in particular to his wonderfully written thesis \cite{VitaleThesis}. Indeed, it was the detail and clarity in the latter that convinced the author that it should be possible to adapt the ideas to the poset-enriched context and also guided his way in attempting to carry this out.

\section{Preliminaries on $\Pos$-enriched Categories}

A \emph{poset-enriched category} $\ca{C}$ is a category enriched in the cartesian monoidal category $\ca{V}=\Pos$ of partially ordered sets and monotone functions. In more explicit terms this means that for any two objects $X,Y\in\ca{C}$ the set $\ca{C}(X,Y)$ of morphisms $X\to Y$ is equipped with a partial order $\leq$ and composition of morphisms is monotone in both variables. Similarly, $\Pos$-enriched functor $F\colon\ca{C}\to\ca{D}$ is a functor between $\Pos$-enriched categories which is \emph{locally monotone}, i.e. such that for all objects $X,Y\in\ca{C}$ the function $\ca{C}(X,Y)\to\ca{D}(FX,FY)$ is order-preserving.

In this paper, the terms \emph{category} and \emph{functor} will always be used to indicate the $\Pos$-enriched notions just recalled. If we want to consider the non-enriched (or $\Set$-enriched) notions, we will explicitly use the prefix \emph{ordinary}.

\subsection{Finite Weighted Limits}

Throughout the paper we shall use the term ``limit'' to mean \emph{weighted} limit. Hence, over our base $\ca{V}=\nc{Pos}$, the \emph{limit} of the diagram $D\colon\ca{D}\to\ca{C}$ weighted by $W\colon\ca{D}\to\nc{Pos}$ is an object $\{W,D\}$ with an isomorphism
\begin{displaymath}
	\ca{C}(X,\{W,D\})\cong[\ca{D},\nc{Pos}](W,\ca{C}(X,D-))
\end{displaymath}
which is natural in $X$.

If the weight $W\colon\ca{D}\to\nc{Pos}$ is taken to be the constant functor $\Delta1\colon\ca{D}\to\nc{Pos}$ on the one element poset, then we speak of a \emph{conical} limit. An example of such would be the equalizer $e\colon E\to X$ of a parallel pair $\begin{tikzcd}
	X\ar[r,shift left=0.75ex,"f"]\ar[r,shift right=0.75ex,"g"'] & Y
\end{tikzcd}$. Unraveling the definition, this means that $e$ satisfies the familiar universal property from ordinary category theory, but that in addition it is order-monic (as opposed to simply monic) in the following sense.

\begin{defi}
	A morphism $m\colon X\to Y$ in a category $\ca{C}$ is called an \emph{$\nc{ff}$-morphism} or an \emph{order-monomorphism} if for every $Z\in\ca{C}$ the monotone map $\ca{C}(Z,m)\colon\ca{C}(Z,X)\to\ca{C}(Z,Y)$ in $\nc{Pos}$ also reflects the order.
\end{defi}

A similar remark applies to any type of conical limit. Namely, the universal property will be the same as in the ordinary case, but augmented by the requirement that the legs of the limit cone are jointly order-monomorphic.

In this paper, we shall particularly like to consider \emph{finite} limits. Indeed, there is a standard notion of finite limit in enriched category theory which is due to Kelly \cite{FiniteLimitsEnriched}. Over the base category $\nc{Pos}$, a limit $\{W,D\}$ is called finite if the weight $W\colon\ca{D}\to\ca{C}$ satisfies the following:
\begin{enumerate}
	\item $\ca{D}$ has finitely many (isomorphism classes of) objects.
	\item $\ca{D}(d,d')$ is a finite poset for all $d,d'\in\ca{D}$.
	\item $Wd$ is a finite poset for all $d\in\ca{D}$.
\end{enumerate}
We now quickly recall some examples of (co)limits that will appear in the remainder of the paper. All except the last one are finite.

\begin{itemize}
\item The \emph{comma object} (also called \emph{lax pullback}) of an ordered pair of morphisms $(f\colon X\to Z,g\colon Y\to Z)$ is a square
\begin{displaymath}
	\begin{tikzcd}[sep=0.35in]		f/g\ar[r,"c_{1}"]\ar[d,"c_{0}"']\ar[dr,phantom,"\leq"] & Y\ar[d,"g"] \\
		X\ar[r,"f"'] & Z
	\end{tikzcd}	
\end{displaymath}
such that $fc_{0}\leq gc_{1}$ and which is universal with this property, in the following sense:
\begin{enumerate}
	\item Given $u_{0}:W\to X$ and $u_{1}\colon W\to Y$ in $\ca{C}$ such that $fu_{0}\leq gu_{1}$, there exists a $u\colon W\to f/g\in\ca{C}$ such that $c_{0}u=u_{0}$ and $c_{1}u=u_{1}$.
	\item If $h,h'\colon A\to f/g$ are such that $c_{0}h\leq c_{0}h'$ and $c_{1}h\leq c_{1}h'$, then $h\leq h'$.
\end{enumerate}
\item In particular, the \emph{kernel congruence} (or \emph{lax kernel}) of a morphism $f\colon X\to Y$ is the comma object $f/f$ of $f$ with itself.
\item The \emph{inserter} of an ordered pair $(f,g)$ of parallel morphisms $\begin{tikzcd}X\ar[r,shift left=1ex,"f"]\ar[r,shift right=1ex, "g"']& Y\end{tikzcd}$ is a morphism $e\colon E\to X\in\ca{C}$ which is universal such that $fe\leq ge$.
\item The \emph{coinserter} of an ordered pair $(f,g)$ of parallel morphisms $\begin{tikzcd}X\ar[r,shift left=1ex,"f"]\ar[r,shift right=1ex, "g"']& Y\end{tikzcd}$ is a morphism $q\colon Y\to Q\in\ca{C}$ which is universal such that $qf\leq qg$.
\item The \emph{copower} (also called \emph{tensor}) of an object $C\in\ca{C}$ to a poset $P\in\nc{Pos}$ is an object $P\bullet C$ defined by the natural isomorphism
\begin{displaymath}
	\ca{C}(P\bullet C,X)\cong\nc{Pos}(P,\ca{C}(C,X))
\end{displaymath}
\end{itemize}

\subsection{Regular and Exact Categories}

The notions of regularity and exactness pertinent to the $\nc{Pos}$-enriched context can be considered as simpler versions of corresponding 2-categorical notions that go back to the work of Street \cite{2dimSheafTheory}. However, they were first considered on their own merits by Kurz \& Velebil in their pioneering paper \cite{Kurz-Velebil}. In this section we quickly recall these notions. The reader can also consult \cite[Chapter 4]{MyThesis} for details.

The appropriate notion of monomorphism has already been recalled above. For the epimorphic part we take the class of morphisms which are left orthogonal to the order-monomorphisms, in the following sense.

\begin{defi}
We say that $e\colon A\to B$ is an \emph{$\nc{so}$-morphism} if the square
	\begin{center}
		\begin{tikzcd}
			\ca{C}(B,X)\ar[r,"-\circ e"]\ar[d,"m\circ -"'] & \ca{C}(A,X)\ar[d,"m\circ -"] \\
			\ca{C}(B,Y)\ar[r,"-\circ e"'] & \ca{C}(A,Y)
		\end{tikzcd}
	\end{center}
	is a pullback in $\nc{Pos}$ for every order-mono $m\colon X\to Y$.
\end{defi}

We can then define regularity as follows.

\begin{defi}\label{defi: regular cat}
	A category $\ca{C}$ is called \emph{regular} if it has all finite limits, every morphism factors as an $\nc{so}$-morphism followed by an order-monomorphism and $\nc{so}$-morphisms are stable under pullback.
\end{defi}

We should note that the original definition in \cite{Kurz-Velebil} contains the additional requirement that every $\nc{so}$-morphism be an \emph{effective} epimorphism, i.e. a coinserter of some pair, so then necessarily of its kernel congruence. However, one can then prove (see \cite[Proposition 2.16]{MyPosExReg}) that this actually is implied by the remaining conditions of \ref{defi: regular cat}. In particular, it then follows that one can also phrase regularity in terms of stable (effective epi, order-mono) factorizations or via the existence and stability of coinserters of kernel congruences. Throughout the paper we will use the terms ``$\nc{so}$-morphism'' and ``effective epimorphism'' interchangeably in the regular context.

Examples of regular categories are all quasivarieties of ordered algebras in the sense of Bloom \& Wright \cite{BloomWright,Kurz-Velebil}, as well as categories of enriched presheaves $[\ca{C}^{\rm{op}},\nc{Pos}]$ on any small category $\ca{C}$. In addition, certain categories of ordered topological spaces, such as the category $\nc{Pries}$ of \emph{Priestley spaces} or the larger category $\nc{Nach}$ of Nachbin's \emph{compact ordered spaces}, are also regular in this sense.

To talk about exactness, we first need to recall the corresponding notion of congruence in the $\nc{Pos}$-enriched setting. These will be relations which are preorders and are in some sense compatible with the existing order furnished by the enrichment. By a \emph{relation} on an object $X$ we as usual mean a subobject $R\subseteq X\times X$, i.e. an equivalence class of an order-monomorphism $R\rightarrowtail X\times X$.

\begin{defi}
	A \emph{congruence} $E$ on an object $X$ in a regular category $\ca{C}$ is a relation on $X$ which is:
	\begin{enumerate}
		\item reflexive
		\item transitive
		\item an \emph{order-ideal}, i.e. whenever $x,x',y,y'\colon A\to X$ are such that $(x,y)\in_{A}E$, $x'\leq x$ and $y\leq y'$, then also $(x',y')\in_{A}E$.
	\end{enumerate}
\end{defi}

We note here for future use that properties (1) and (3) can be combined into one. Namely, a relation $E$ on $X$ is a congruence if and only if it is transitive and \emph{order-reflexive} (see \cite{MyPosExReg}). The latter means that $E$ contains not merely the diagonal $\Delta_X$ but the full comma object $I_X\coloneqq 1_X/1_X$. In other words, it is to say that any pair of morphisms $x,x'\colon A\to X$ with $x\leq x'$ must factor through $E$.

\begin{rmk}
	In \cite{MyPosExReg}, the term \emph{weakening-closed} was used for relations satisfying property (3) above. We prefer here to switch to \emph{order-ideal}, since the latter is more established in the literature.
\end{rmk}

A congruence is \emph{effective} if it is the kernel congruence of some morphism. The definition of exactness can then be stated as follows.

\begin{defi}\cite{Kurz-Velebil}
	A category $\ca{E}$ is called \emph{exact} if it is regular and every congruence in $\ca{E}$ is effective.
\end{defi}

Of the examples mentioned earlier, exact are all varieties of ordered algebras, all presheaf categories $[\ca{C}^{\rm{op}},\nc{Pos}]$ and the category of Nachbin spaces $\nc{Nach}$.

\subsection{Projective Covers and Weak Weighted Limits}

For ordinary categories, projective objects are intimately connected to both the regular and exact completion of a category with (weak) finite limits. The exact same connection is present in our enriched setting. Of course here projectivity should be defined in terms of the appropriate notion of epimorphism.

\begin{defi}
	An object $P$ in a regular category $\ca{C}$ will be called $(\nc{so}-)$\emph{projective} if $\ca{C}(P,-)\colon\ca{C}\to\nc{Pos}$ preserves $\nc{so}$-morphisms.
\end{defi}

\begin{defi}
	A full subcategory $\ca{P}$ of a regular category $\ca{C}$ is called a \emph{projective cover} of $\ca{C}$ if:
	\begin{enumerate}
		\item Every $P\in\ca{P}$ is projective in $\ca{C}$.
		\item For every $C\in\ca{C}$ there exist an object $P\in\ca{P}$ and an $\nc{so}$-morphism $P\twoheadrightarrow C$.
	\end{enumerate} 
\end{defi}

A category which has a projective cover is said to have \emph{enough projectives}. We will show later on that every variety of ordered algebras and more generally every category of algebras $\nc{Pos}^{\mathbb{T}}$ for a sufficiently nice monad $\mathbb{T}$ enjoy this property.

\begin{rmk}
Projectives in $\nc{Pos}$-enriched categories are often closely related to discrete objects. In particular, in any category which has enough discrete objects, every projective object must itself be discrete, being a subobject of a discrete one. This is the case, for example, in the variety $\nc{OrdMon}$ of ordered monoids and in $\nc{Nach}$.
\end{rmk}

Next, we need to make precise what our notion of weak weighted limit will be. In layman's terms, this can be done almost immediately: a weak limit of some given type will satisfy only the ``existence part'' of the universal property which defines the corresponding strong limit.

So for example, a \emph{weak inserter} of an ordered pair $(f,g)$ of parallel morphisms $\begin{tikzcd}X\ar[r,shift left=1ex,"f"]\ar[r,shift right=1ex, "g"']& Y\end{tikzcd}$ is a morphism $e\colon E\to X\in\ca{C}$ such that:

\begin{itemize}
	\item $fe\leq ge$.
	\item Given any $h\colon Z\to X$ such that $fh\leq gh$, there exists a $u\colon Z\to E$ with $eu=h$.
\end{itemize} 
So we've essentially dropped the requirement that $e$ be an order-monomorphism. The same observation applies to any other type of limit. However, to alleviate the feeling that this is somewhat of an ad hoc choice and with an eye towards other bases of enrichment over which one might want to consider similar constructions of exact completions, let us say a few more words.

There is a very general framework for weak notions in enriched category theory, so in particular weak limits, which is due to Lack \& Ros\'icky \cite{LackRosickyWeakness}. The data one needs consist of a monoidal category $\ca{V}$ and a class of morphisms $\ca{E}$ therein, which are required to satisfy some conditions. Then given a $\ca{V}$-functor $D\colon\ca{D}\to\ca{C}$ and a weight $W\colon\ca{D}\to\ca{V}$, a \emph{weak $W$-weighted limit} consists of:
\begin{itemize}
	\item An object $\{W,D\}_{w}$
	\item A natural transformation $W\to\ca{C}(\{W,D\}_{w},D-)$
\end{itemize}
such that for all $X\in\ca{C}$ the induced morphism $\ca{C}(X,\{W,D\}_w)\to[\ca{D},\ca{V}](W,\ca{C}(X,D-))$ lies in the class $\ca{E}$.

Applying this general definition to the case where $\ca{V}=\nc{Pos}$ and $\ca{E}$ is the class of surjections, we have the following definition.

\begin{defi}\label{defi: weak limits}
	Given a functor $D\colon\ca{D}\to\ca{C}$ and a weight $W\colon\ca{D}\to\nc{Pos}$, a \emph{weak $W$-weighted limit} consists of:
	\begin{itemize}
		\item An object $\{W,D\}_{w}$
		\item A natural transformation $W\to\ca{C}(\{W,D\}_{w},D-)$
	\end{itemize}
	such that for all $X\in\ca{C}$ the induced morphism $\ca{C}(X,\{W,D\}_w)\to[\ca{D},\nc{Pos}](W,\ca{C}(X,D-))$ is a surjection.
\end{defi}

It is then easy to see that, for example, in the case of the weight defining an inserter, $\{W,D\}_w$ will have precisely the property we mentioned above. In this paper we will be interested in categories which have weak finite limits, so we accordingly introduce the following.

\begin{defi}\label{defi: weakly lex}
	A category $\ca{C}$ is said to be \emph{weakly lex} if the weak limit $\{W,D\}_w$ exists for every finite weight $W\colon\ca{D}\to\nc{Pos}$.
\end{defi}

In order to have a better handle on the existence of weak finite limits, it will be useful to know how to construct all of them from specific types of such. In particular, we will need the following result.

\begin{prop}\label{weakly lex from weak prod and weak ins}
	If a category $\ca{C}$ has weak finite products and weak inserters, then it is weakly lex.
\end{prop}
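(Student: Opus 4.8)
The plan is to follow the classical strategy by which finite limits are assembled from finite products and equalizers, modified at two points: the enrichment forces us to carry along the finite weight $W$, and \emph{weakness} destroys the uniqueness of factorizations, so the conditions carving out the limit must be imposed one at a time into single objects rather than bundled together. Concretely, I would first upgrade the hypotheses to include weak equalizers, and then show that weak finite products, weak inserters and weak equalizers suffice to build the weak limit of \emph{any} finite weight by a single ``weak product with relations''.

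First I would construct weak equalizers. Given $f,g\colon X\to Y$, form the weak inserter $e_1\colon E_1\to X$ of $(f,g)$, so that $fe_1\le ge_1$, and then the weak inserter $e_2\colon E_2\to E_1$ of $(ge_1,fe_1)$, so that $ge_1e_2\le fe_1e_2$. The two inequalities together with antisymmetry of the poset $\ca C(E_2,Y)$ give $fe_1e_2=ge_1e_2$, and a two-step factorization shows that any $h$ with $fh=gh$ factors through $e_1e_2$: indeed $fh\le gh$ lets $h$ factor through $e_1$, and then $ge_1(-)\le fe_1(-)$ holds on that factorization by $gh=fh$, letting it factor further through $e_2$. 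Hence $e\coloneqq e_1e_2$ is a weak equalizer.

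For the main step, fix $D\colon\ca D\to\ca C$ and a finite weight $W\colon\ca D\to\Pos$, and recall that a cone for $\{W,D\}$ with vertex $X$ is exactly a family of morphisms $\lambda_d(w)\colon X\to Dd$, indexed by $d\in\ca D$ and $w\in Wd$, such that (A) $w\le w'$ in $Wd$ implies $\lambda_d(w)\le\lambda_d(w')$, and (B) $\lambda_{d'}(W\phi(w))=D\phi\circ\lambda_d(w)$ for all $\phi\in\ca D(d,d')$ and $w\in Wd$; these encode precisely the monotonicity of each $\lambda_d$ and the $\Pos$-naturality of $\lambda$. By finiteness of $W$ there are finitely many index pairs $(d,w)$ and finitely many conditions (A), (B). I would therefore take the weak finite product $\Pi$ of the objects $Dd$, one factor per pair $(d,w)$, with projections $\pi_{d,w}\colon\Pi\to Dd$, and then impose (A) and (B) successively: each (A) is an inequality $\pi_{d,w}\le\pi_{d,w'}$ between maps into the single object $Dd$, imposed by a weak inserter, and each (B) is an equality between maps into the single object $Dd'$, imposed by a weak equalizer, precomposing the remaining conditions with the inclusion built so far at each stage. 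Writing $\ell\colon L\to\Pi$ for the resulting composite and $\rho_{d,w}\coloneqq\pi_{d,w}\ell$, the $\rho_{d,w}$ satisfy (A) and (B) and so assemble into a cone $\mu\colon W\Rightarrow\ca C(L,D-)$. To see $(L,\mu)$ is the weak limit, take any cone $\lambda$: weak-product existence yields $h_0\colon X\to\Pi$ with $\pi_{d,w}h_0=\lambda_d(w)$, and since $\lambda$ is a cone, $h_0$ satisfies the defining (in)equality of the first inserter or equalizer and factors through it; that condition being an (in)equality of maps into a single $Dd$, it is preserved under the precomposition, as are all the others, so one factors through each stage in turn and obtains $h\colon X\to L$ with $\ell h=h_0$, whence $\rho_{d,w}h=\lambda_d(w)$, i.e. $\mu\circ h=\lambda$. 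Thus the comparison $\ca C(X,L)\to[\ca D,\Pos](W,\ca C(X,D-))$ is surjective on objects; as a surjection in $\Pos$ is simply a surjective monotone map, this is exactly the condition of \Cref{defi: weak limits}, and $\ca C$ is weakly lex.

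The main obstacle, and the reason for organizing the construction in this way rather than through the usual end formula $\{W,D\}=\int_d\{Wd,Dd\}$ realized as one equalizer between products of cotensors, is the absence of uniqueness for weak limits. Were the naturality conditions bundled into a single parallel pair into a product, or into weak cotensors $\{Wd,Dd\}$, a candidate factorization and the two composites would agree only \emph{componentwise}, and componentwise agreement does not force equality of two maps into a weak product or weak cotensor, so the factorization through the final weak equalizer would fail. Imposing each condition separately as an (in)equality into a single object $Dd$ avoids this entirely, since precomposition always preserves $\le$ and $=$; this is the decisive point that makes the weak analogue of ``products and equalizers generate finite limits'' go through, and it is the step I would expect to require the most care.
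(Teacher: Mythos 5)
The paper states this proposition without proof, so there is no argument of the author's to compare yours against; judged on its own, your proof is correct and complete. The two places where genuine care is needed are exactly the ones you identify: weak equalizers are obtained from two successive weak inserters together with antisymmetry of the hom-posets, and the finitely many conditions (A) and (B) cutting out a cone must be imposed one at a time, each as an (in)equality of morphisms into a single object $Dd$ --- a form that survives precomposition with the later inserters and equalizers --- rather than bundled into a single parallel pair into a further weak product, where componentwise agreement of two maps need not give equality. This is the natural $\nc{Pos}$-enriched adaptation of the standard construction of weak limits from weak products and weak equalizers in the ordinary setting, and it establishes precisely the surjectivity of the comparison map demanded by \cref{defi: weak limits}. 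One cosmetic point: the paper's finiteness condition only requires finitely many \emph{isomorphism classes} of objects of $\ca{D}$, so to get a finite index set of pairs $(d,w)$ you should first replace $\ca{D}$ by a skeleton; this changes neither the finiteness of the weight nor the existence of the weak limit.
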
 
\begin{proof}
We will only describe the construction of an arbitrary weak finite limit and leave as an exercise for the reader the verification that this works. 
	
We observe first that weak equalizers can be constructed by taking two successive inserters. Then we can construct $n$-ary pullbacks via these and finite products.
	
Now consider an arbitrary finite weight $W\colon\ca{I}\to\nc{Pos}$ and a diagram $D\colon\ca{I}\to\ca{C}$. For every object $i\in\ca{I}$ and every $x\in Wi$, we set $D_{(i,x)}\coloneqq Di$ and form a weak product $P=\prod\limits_{i\in\ca{I},x\in Wi}D_{(i,x)}$. Then for every pair of arrows $\begin{tikzcd}
	i\ar[r,"\alpha"] & j & i'\ar[l,"\beta"']
\end{tikzcd}$ 
with common codomain and every pair $(x,x')\in Wi\times Wi'$ with $W\alpha(x)\leq W\beta(x')$ we take a weak inserter 
\begin{tikzcd}
	E^{(\alpha,\beta)}_{(x,x')}\ar[r,"e^{(\alpha,\beta)}_{(x,x')}"] & P\ar[r,shift left=1ex,"D\alpha\circ\pi_{(i,x)}"]\ar[r,shift right=1ex,"\vleq","D\beta\circ\pi_{(i',x')}"'] & Dj
\end{tikzcd}
We can assume here that for all $i\in\ca{I}$ and $x\in Wi$ we have chosen $e^{(1_i,1_i)}_{(x,x)}=1_P$. Finally, we form a weak pullback $l^{(\alpha,\beta)}_{(x,x')}\colon L\to E^{(\alpha,\beta)}_{(x,x')}$ of the finite family of arrows $e^{(\alpha,\beta)}_{(x,x')}\colon E^{(\alpha,\beta)}_{(x,x')}\to P$. Then $L$ with $\phi_i\colon Wi\to\ca{C}(L,Di)$ defined by $\phi_i(x)\coloneqq \pi_{(i,x)}\circ l^{(1_i,1_i)}_{(x,x)}$ gives the desired weak weighted limit.
\end{proof}

It is also not hard to check that, much like in the ordinary case, limits in a category imply the existence of the corresponding weak limits in any projective cover.

\begin{prop}\label{prop: weak limits in proj cover}
	Let $\ca{C}$ be a category and $\ca{P}$ a projective cover of $\ca{C}$. If $W\colon\ca{D}\to\nc{Pos}$ is any weight for which $\ca{C}$ has $W$-weighted limits, then $\ca{P}$ has weak $W$-weighted limits. In particular, if $\ca{C}$ is lex, then $\ca{P}$ is weakly lex.
\end{prop}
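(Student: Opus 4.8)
The plan is to mimic the familiar ordinary argument: compute the strong limit in $\ca{C}$ and then cover it by a projective from $\ca{P}$. Concretely, fix a diagram $D\colon\ca{D}\to\ca{P}$ and compose with the inclusion to regard it as a diagram in $\ca{C}$. By hypothesis the strong weighted limit $L\coloneqq\{W,D\}$ exists in $\ca{C}$, with its limit cone $\lambda\colon W\to\ca{C}(L,D-)$ inducing, for every $X$, the defining isomorphism $\ca{C}(X,L)\cong[\ca{D},\nc{Pos}](W,\ca{C}(X,D-))$. Using that $\ca{P}$ is a projective cover, I would then choose an $\nc{so}$-morphism $p\colon P\twoheadrightarrow L$ with $P\in\ca{P}$, and claim that $P$ carries the sought weak limit.

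First I would produce the candidate cone. Since $\ca{P}$ is full, $\ca{P}(P,Dd)=\ca{C}(P,Dd)$ for every $d\in\ca{D}$, so the composite natural transformation
\[
\mu\colon W\xrightarrow{\ \lambda\ }\ca{C}(L,D-)\xrightarrow{\,-\circ p\,}\ca{C}(P,D-)
\]
is a legitimate $\ca{P}$-valued cone $W\to\ca{P}(P,D-)$. The key step is then to identify the morphism induced by $(P,\mu)$. For $X\in\ca{P}$ (again reading all hom-posets inside $\ca{C}$ by fullness), an arrow $f\colon X\to P$ is sent by this induced morphism to the composite $\ca{C}(f,D-)\circ\mu=\ca{C}(pf,D-)\circ\lambda$, which is exactly the image of $pf\colon X\to L$ under the limit isomorphism for $L$. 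Hence the induced morphism factors as
\[
\ca{C}(X,P)\xrightarrow{\ \ca{C}(X,p)\ }\ca{C}(X,L)\xrightarrow{\ \cong\ }[\ca{D},\nc{Pos}](W,\ca{C}(X,D-)).
\]

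Now the second arrow is the limit isomorphism, while the first arrow $\ca{C}(X,p)$ is a surjection: $X$ is projective and $p$ is an $\nc{so}$-morphism, so by definition $\ca{C}(X,-)$ carries $p$ to a surjection in $\nc{Pos}$. A composite of a surjection with an isomorphism is a surjection, so the induced morphism lies in the class of surjections, which is precisely the condition of \cref{defi: weak limits}. Thus $(P,\mu)$ is a weak $W$-weighted limit of $D$ in $\ca{P}$.

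The only thing one really has to be careful about is the bookkeeping around fullness — verifying both that $\mu$ genuinely defines a $\ca{P}$-valued cone and that the induced morphism computed internally to $\ca{P}$ agrees with the one computed in $\ca{C}$ — together with the precise factorization of that induced morphism; beyond this there is no genuine obstacle, since projectivity of $X$ supplies the surjectivity for free. The final clause is then immediate: if $\ca{C}$ is lex it has $W$-weighted limits for every finite weight $W$, so the construction above yields weak $W$-weighted limits in $\ca{P}$ for all such $W$, i.e.\ $\ca{P}$ is weakly lex.
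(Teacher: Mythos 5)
Your proof is correct and is precisely the standard ``cover the strong limit by a projective'' argument that the paper alludes to when it states this proposition without proof (``much like in the ordinary case''): compute $\{W,D\}$ in $\ca{C}$, take an $\nc{so}$-morphism $p\colon P\twoheadrightarrow\{W,D\}$ with $P\in\ca{P}$, and use projectivity of each $X\in\ca{P}$ to see that the induced map factors as the surjection $\ca{C}(X,p)$ followed by the limit isomorphism. The factorization you identify is exactly the right bookkeeping, so nothing is missing.
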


\section{The Exact Completion}

In this section we give the construction of the category $\ca{C}_{\rm{ex}}$ and the verification that it is indeed exact. To make the proof easier to digest, we have split it up into a sequence of separate steps verifying each successive layer of properties.

Our intent is to closely follow the construction of the ordinary exact completion of a weakly lex category, as presented in \cite{Carboni-Vitale} and especially its detailed exposition in \cite{VitaleThesis}. To begin with, the objects of $\ca{C}_{\rm{ex}}$ are weak versions of equivalence relations, called \emph{pseudo-}equivalence relations, which are the appropriate notion of congruence in the ordinary setting. By ``weak'' here we mean the absence of the jointly monic property of the span representing a relation. Thus, it seems natural that the corresponding objects to consider here should be the following.

\begin{defi}
A  \emph{pseudocongruence} in $\ca{C}$ is a parallel pair of morphisms 
$\begin{tikzcd}
	R \ar[r,"r_0",shift left=1ex]\ar[r,"r_1"',shift right=1ex] & X
	\end{tikzcd}$
\end{defi}
which is:
\begin{enumerate}
\item \emph{order-reflexive}: every pair of morphisms $a_{0},a_{1}\colon A\to X$ with $a_0\leq a_1$ factors through $(r_0,r_1)$.
\item \emph{transitive}: For a weak pullback square 
\begin{center}
	\begin{tikzcd}
		R*R\ar[r,"d_{0}"]\ar[d,"d_{1}"'] & R\ar[d,"r_{1}"] \\
		R\ar[r,"r_{0}"'] & X
	\end{tikzcd}
\end{center}
	there exists a $\tau\colon R*R\to R\in\ca{C}$ such that $r_{0}\tau=r_{0}d_{0}$ and $r_{1}\tau=r_{1}d_{1}$.
\end{enumerate}

\begin{rmk}
\begin{enumerate}
	\item Observe that transitivity does not depend on a choice of weak pullback, since any two such choices will factor through each other.
	\item Order-reflexivity of 
	$\begin{tikzcd}
		R \ar[r,"r_0",shift left=1ex]\ar[r,"r_1"',shift right=1ex] & X
		\end{tikzcd}$ is equivalent to the requirement that any (hence every) weak comma square
	\begin{center}
		\begin{tikzcd}
			I_X\ar[r,"\iota_{1}"]\ar[d,"\iota_{0}"']\ar[dr,phantom,"\leq"] & X\ar[d,equal] \\
			X\ar[r,equal] & X
		\end{tikzcd}
	\end{center}
		factors through $(r_0,r_1)$.
\end{enumerate}
\end{rmk}
\vspace{0.2cm}

We now define a category $\ca{C}_{\rm{ex}}$ as follows:
\begin{itemize}
	\item \underline{Objects}: The pseudocongruences of $\ca{C}$. The object $\begin{tikzcd}
		R \ar[r,"r_0",shift left=0.75ex]\ar[r,"r_1"',shift right=0.75ex] & X
	\end{tikzcd}$ will be denoted by a pair $(X,R)$. This is to be thought of as the would-be quotient of $X$ by $R$.
	\item \underline{Morphisms}: Given objects $(X,R)$ and $(Y,S)$, to define morphisms $(X,R)\to(Y,S)$ we first consider morphisms $f\colon X\to Y\in\ca{C}$ for which there exists an $\bar{f}\colon R\to S\in\ca{C}$ such that $s_{0}\bar{f}=fr_{0}$ and $s_{1}\bar{f}=fr_{1}$.
	\begin{center}
		\begin{tikzcd}
			R \ar[r,"r_0",shift left=0.75ex]\ar[r,"r_1"',shift right=0.75ex]\ar[d,"\bar{f}"'] & X\ar[d,"f"] \\
			S \ar[r,"s_0",shift left=0.75ex]\ar[r,"s_1"',shift right=0.75ex] & Y
		\end{tikzcd}
	\end{center}
	Given our intuition on the objects, this is to say that we consider those morphisms which would be (well-defined and) order-preserving on the corresponding quotients.
	We then define a relation $f\preccurlyeq g$ on the collection of such morphisms as follows: $$f\preccurlyeq g \iff (\exists \Sigma\colon X\to S\in\ca{C})\quad s_{0}\Sigma=f, s_{1}\Sigma=g$$
	
	This is easily seen to be a preorder relation, reflexivity and transitivity following from the two corresponding internal properties of $S$. We denote by $\sim$ the induced equivalence relation and then define a morphism $[f]\colon (X,R)\to(Y,S)\in\ca{C}_{\rm{ex}}$ to be an equivalence class with respect to $\sim$.
	
	It is easy to check that $\preccurlyeq$ is compatible with composition of morphisms in $\ca{C}$ so that $\ca{C}_{\rm{ex}}$ canonically becomes a $\nc{Pos}$-category. 
\end{itemize}

We now begin the painstaking process of verifying that $\ca{C}_{\rm{ex}}$ is exact. In all proofs that follow, whenever we take a (weak) limit of some finite diagram in $\ca{C}$, we will denote the corresponding cone by broken arrows, while for the diagram itself we will use solid ones. Hopefully this alleviates some of the notational  complexity of the situation.

\begin{prop}\label{inserters in C_ex}
	$\ca{C}_{\rm{ex}}$ has inserters.
\end{prop}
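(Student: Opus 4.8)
The plan is to realize the inserter of a parallel pair $[f],[g]\colon (X,R)\to(Y,S)$ by building its underlying object as a weak limit in $\ca{C}$ and equipping it with the pseudocongruence pulled back from $R$. Fix representatives $f,g\colon X\to Y$ together with comparison maps $\bar f,\bar g\colon R\to S$. Since $\ca{C}$ is weakly lex, I can form the weak limit $E$ of the finite diagram encoding pairs $(e,\Sigma)$ with $e\colon E\to X$ and $\Sigma\colon E\to S$ subject to $s_0\Sigma=fe$ and $s_1\Sigma=ge$ (concretely, a weak pullback of $(f,g)\colon X\to Y\times Y$ along $(s_0,s_1)\colon S\to Y\times Y$). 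The witness $\Sigma$ is exactly the data certifying that $[fe]\preccurlyeq[ge]$, hence that $[f][e]\preccurlyeq[g][e]$; the role of the weak limit is that any $h$ with $[f][h]\preccurlyeq[g][h]$ supplies such a witness and therefore factors through $e$.

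Next I would endow $E$ with a pseudocongruence $T$ obtained as the weak pullback of $R$ along $e$, i.e. the weak limit of $E\xrightarrow{e}X\xleftarrow{r_0}R\xrightarrow{r_1}X\xleftarrow{e}E$, carrying $t_0,t_1\colon T\to E$ and $\bar e\colon T\to R$ with $et_0=r_0\bar e$ and $et_1=r_1\bar e$. By construction, a pair $(a_0,a_1)\colon A\to E$ factors through $(t_0,t_1)$ precisely when $(ea_0,ea_1)$ factors through $(r_0,r_1)$. Using this I would check that $T$ is a pseudocongruence: order-reflexivity follows because $a_0\leq a_1$ gives $ea_0\leq ea_1$, which factors through $R$, whence the pair factors through $T$ by the weak limit property; transitivity follows by transporting a composable pair in $T$ to one in $R$, applying transitivity of $R$ via the weak pullback $R*R$, and lifting the result back along the weak limit defining $T$. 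The same $\bar e\colon T\to R$ then exhibits $[e]\colon(E,T)\to(X,R)$ as a morphism of $\ca{C}_{\rm ex}$.

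It remains to verify the universal property, i.e. that precomposition with $[e]$ is a poset isomorphism $\ca{C}_{\rm ex}((Z,W),(E,T))\cong\{[h]\mid[f][h]\preccurlyeq[g][h]\}$, naturally in $(Z,W)$. This map is monotone and, as $[f][e]\preccurlyeq[g][e]$, lands in the displayed sub-poset. Given such an $[h]$ with witness $\Sigma_h\colon Z\to S$, the cone $(h,\Sigma_h)$ factors through $E$, producing $k\colon Z\to E$ with $ek=h$; feeding the comparison map $\bar h\colon W\to R$ of $[h]$ into the weak limit defining $T$ yields a comparison map $\bar k\colon W\to T$ over $k$, so $[k]\colon(Z,W)\to(E,T)$ is a genuine morphism with $[e][k]=[h]$. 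For injectivity and order-reflection I would argue uniformly: if $ek'\preccurlyeq ek''$ with respect to $R$, witnessed by $\Phi\colon Z\to R$ with $r_0\Phi=ek'$ and $r_1\Phi=ek''$, then the cone $(k',k'',\Phi)$ factors through $T$, giving $k'\preccurlyeq k''$ with respect to $T$; applying this in both directions turns $[e][k']=[e][k'']$ into $[k']=[k'']$ and shows the order is reflected.

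I expect the main obstacle to be the injectivity/order-reflection half of the universal property, since weak limits only guarantee the \emph{existence} of factorizations; what rescues uniqueness is that equality and order in $\ca{C}_{\rm ex}$ are taken modulo the pseudocongruences, so any two lifts are compared not on the nose but through $R$ and $T$, and the weak limit property of $T$ is precisely strong enough to manufacture the required comparison maps. Verifying that $T$ is genuinely transitive --- juggling $R*R$ against the weak limit defining $T$ --- is the most calculation-heavy step, though conceptually routine, while naturality of the isomorphism in $(Z,W)$ is straightforward bookkeeping that I would leave to the reader.
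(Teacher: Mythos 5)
Your proposal is correct and follows essentially the same route as the paper's proof: the same two weak conical limits (the witness object $E$ over $f,g,s_0,s_1$, then the pseudocongruence obtained as a weak pullback of $R$ along $e$), the same verification of order-reflexivity and transitivity by transporting through $R$, and the same factorization and order-reflection arguments for the universal property. The only cosmetic difference is your parenthetical description of $E$ via $Y\times Y$, which should be read purely as shorthand for the conical diagram you state (since $\ca{C}$ need not have genuine products), exactly as the paper sets it up.
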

\begin{proof}
	Consider an ordered pair of parallel morphisms $\begin{tikzcd}(X,R)\ar[r,"{[f]}",shift left=0.7ex]\ar[r,"{[g]}"',shift right=0.7ex] & (Y,S)
	\end{tikzcd}$ in $\ca{C}_{\rm{ex}}$. We then successively form the following two weak conical limits in $\ca{C}$:
	\begin{center}
		\begin{tikzcd}
			& X\ar[dl,"f"']\ar[dr,"g"] & \\
			Y & E\ar[u,dashed,"e"']\ar[d,dashed,"\phi"] & Y \\
			& S\ar[ul,"s_0"]\ar[ur,"s_1"'] & 
		\end{tikzcd}
		\qquad
		\begin{tikzcd}
			& \tilde{R}\ar[dl,dashed,"\tilde{r_0}"']\ar[dr,dashed,"\tilde{r_1}"]\ar[dd,dashed,"\bar{e}"'] & \\
			E\ar[dd,"e"'] & &  E\ar[dd,"e"] \\
			& R\ar[dl,"r_0"']\ar[dr,"r_1"] & \\
			X & & X
		\end{tikzcd}
	\end{center}
	
	We claim that $\begin{tikzcd}[cramped] \tilde{R}\ar[r,"\tilde{r_0}",shift left=0.75ex]\ar[r,"\tilde{r_1}"',shift right=0.75ex] & E
	\end{tikzcd}$ is a pseudocongruence and that then $[e]\colon(E,\tilde{R})\to(X,R)$ is a morphism in $\ca{C}_{\rm{ex}}$ which is the inserter of $([f],[g])$.
	
	First, consider any $a_{0}\leq a_{1}\colon A\to X$ in $\ca{C}$. Then $ea_0\leq ea_1$ and hence by order-reflexivity of $R$ we obtain a $u\colon A\to R\in\ca{C}$ such that $r_{0}u=ea_0$ and $r_{1}u=ea_1$. This means that $A$ together with the morphisms $a_0$, $a_1$ and $u$ form a cone over the diagram whose weak limit is $\tilde{R}$. Thus, we obtain a $u'\colon A\to\tilde{R}$ such that $\tilde{r_0}u'=a_0$, $\tilde{r_1}u'=a_1$ and $\bar{e}u'=u$.
	
	For transitivity, consider the following two weak pullbacks in $\ca{C}$
	\begin{center}
		\begin{tikzcd}
			R*R\ar[r,"d_{0}^{R}"]\ar[d,"d_{1}^{R}"'] & R\ar[d,"r_{1}"] \\
			R\ar[r,"r_{0}"'] & X
		\end{tikzcd}
		\quad
		\begin{tikzcd}
			\tilde{R}*\tilde{R}\ar[r,"d_{0}^{\tilde{R}}"]\ar[d,"d_{1}^{\tilde{R}}"'] & \tilde{R}\ar[d,"\tilde{r_{1}}"] \\
			\tilde{R}\ar[r,"\tilde{r_{0}}"'] & E
		\end{tikzcd}
	\end{center}
	There is a morphism $u\colon\tilde{R}*\tilde{R}\to R*R$ such that $d_{0}^{R}u=\bar{e}d_{0}^{\tilde{R}}$ and $d_{1}^{R}u=\bar{e}d_{1}^{\tilde{R}}$. Using the transitivity $\tau^{R}\colon R*R\to R$ of the pseudocongruence $R$, we then have the following cone over the diagram defining $\tilde{R}$:
	\begin{center}
		\begin{tikzcd}
			& \tilde{R}*\tilde{R}\ar[dl,dashed,"\tilde{r_0}d_{0}^{\tilde{R}}"']\ar[dr,dashed,"\tilde{r_1}d_{1}^{\tilde{R}}"]\ar[dd,dashed,"\tau^{R}u"'] & \\
			E\ar[dd,"e"'] & &  E\ar[dd,"e"] \\
			& R\ar[dl,"r_0"']\ar[dr,"r_1"] & \\
			X & & X
		\end{tikzcd}
	\end{center}
	Thus, there exists a $\tau^{\tilde{R}}\colon\tilde{R}*\tilde{R}\to\tilde{R}$ such that $\tilde{r_{0}}\tau^{\tilde{R}}=\tilde{r_{0}}d_{0}^{\tilde{R}}$, $\tilde{r_{1}}\tau^{\tilde{R}}=\tilde{r_{1}}d_{1}^{\tilde{R}}$ and $\bar{e}\tau^{\tilde{R}}=\tau^{R}u$.
	
	By construction, we have the morphism $\bar{e}\colon\tilde{R}\to R$ which shows that $e$ defines a morphism $[e]\colon(E,\tilde{R})\to(X,R)$ in $\ca{C}_{\rm{ex}}$. In addition, the morphism $\phi$ exhibits the inequality $[f][e]\leq[g][e]$.
	
	Now suppose that $[h]\colon(Z,T)\to(X,R)$ is such that $[f][h]\leq[g][h]$ in $\ca{C}_{\rm{ex}}$. This means that there exists a $\Sigma\colon Z\to S\in\ca{C}$ such that $s_{0}\Sigma=fh$ and $s_{1}\Sigma=gh$. By the definition of $E$ as a weak limit, there is an induced $u\colon Z\to E\in\ca{C}$ such that $eu=h$ and $\phi u=\Sigma$. Now we have the following cone
	\begin{center}
		\begin{tikzcd}
			& T\ar[dl,dashed,"ut_0"']\ar[dr,dashed,"ut_1"]\ar[dd,dashed,"\bar{h}"'] & \\
			E\ar[dd,"e"'] & &  E\ar[dd,"e"] \\
			& R\ar[dl,"r_0"']\ar[dr,"r_1"] & \\
			X & & X
		\end{tikzcd}
	\end{center}
	Hence, we obtain a $\bar{u}\colon T\to \tilde{R}$ such that $\tilde{r_0}\bar{u}=ut_0$, $\tilde{r_1}\bar{u}=ut_1$ and $\bar{e}\bar{u}=\bar{h}$. The first two of these equalities imply that $u$ defines a morphism $(Z,T)\to(E,\tilde{R})\in\ca{C}_{\rm{ex}}$ and then we trivially have $[e][u]=[h]$.
	
	Finally, consider $[u],[v]\colon (Z,T)\to(E,\tilde{R})$ in $\ca{C}_{\rm{ex}}$ such that $[e][u]\leq[e][v]$. So there exists a $\Sigma\colon Z\to R$ with $r_{0}\Sigma=eu$ and $r_{1}\Sigma=ev$. Then by the weak limit property of $\tilde{R}$, we get a $\tilde{\Sigma}\colon Z\to\tilde{R}$ such that $\tilde{r_0}\tilde{\Sigma}=u$, $\tilde{r_1}\tilde{\Sigma}=v$ and $\bar{e}\tilde{\Sigma}=\Sigma$. The first two equalities yield $[u]\leq[v]$.
\end{proof}

\begin{prop}\label{products in C_ex}
	$\ca{C}_{\rm{ex}}$ has finite products.
\end{prop}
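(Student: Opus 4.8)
The plan is to establish finite products by treating the empty product (terminal object) and binary products separately, in each case lifting the appropriate \emph{weak} limits of $\ca{C}$ to genuine limits of $\ca{C}_{\rm{ex}}$, following the same assemble-a-cone-and-use-the-weak-limit-property pattern as in Proposition \ref{inserters in C_ex}.

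For the terminal object I would take a weak terminal object $T$ of $\ca{C}$ together with a weak binary product $T\times T$ with projections $u_0,u_1$, and declare the terminal object of $\ca{C}_{\rm{ex}}$ to be $(T,T\times T)$. The key point is that $T\times T$ relates \emph{every} pair of maps into $T$, not only the ordered ones, so both order-reflexivity and transitivity of this pseudocongruence follow immediately from the weak product property. Then for any $(Z,W)$ a map $Z\to T$ gives a morphism $(Z,W)\to(T,T\times T)$, while any two such maps $f,g$ are related in both directions by a $\Sigma\colon Z\to T\times T$ with projections $f$ and $g$; hence they are identified under $\sim$, the hom-poset is a singleton, and $(T,T\times T)$ is terminal.

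For the binary product of $(X,R)$ and $(Y,S)$ I would first form a weak product $P$ of $X$ and $Y$ with projections $p_X,p_Y$. The naive choice of a weak product of $R$ and $S$ as the relation fails: since $P$ is only a \emph{weak} product, a map into it is not determined by its two projections, so the factorizations required by order-reflexivity cannot be obtained strictly. The remedy, and the main obstacle to get past, is to define the relation $Q$ as the weak finite limit (which exists since $\ca{C}$ is weakly lex) of the evident finite diagram, whose cones over $A$ are quadruples $(\alpha,\beta,u,v)$ with $\alpha,\beta\colon A\to P$, $u\colon A\to R$, $v\colon A\to S$ satisfying $p_X\alpha=r_0 u$, $p_X\beta=r_1 u$, $p_Y\alpha=s_0 v$, $p_Y\beta=s_1 v$; write $q_0,q_1\colon Q\to P$, $\pi_R\colon Q\to R$, $\pi_S\colon Q\to S$ for the legs. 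Building $q_0,q_1$ into the limit data is exactly what forces the needed equations to hold on the nose. I would then take the product to be $(P,Q)$ with projections $[p_X],[p_Y]$, using $\pi_R,\pi_S$ as the witnesses $\overline{p_X},\overline{p_Y}$.

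The verification is then routine cone-chasing against the weak limit property of $Q$. Order-reflexivity of $Q$ follows by pushing $a_0\leq a_1\colon A\to P$ through $p_X$ and $p_Y$ and invoking order-reflexivity of $R$ and $S$; transitivity follows by feeding the composable pairs $\pi_R d_0,\pi_R d_1$ and $\pi_S d_0,\pi_S d_1$ (composable because $q_1 d_0=q_0 d_1$) into the transitivity maps $\tau^R,\tau^S$ and reassembling a cone. For the universal property, given $[f]\colon(Z,W)\to(X,R)$ and $[g]\colon(Z,W)\to(Y,S)$ I would pair $f,g$ through $P$ to get $h\colon Z\to P$ and lift $\bar f,\bar g$ to $\bar h\colon W\to Q$ via a cone, yielding $[h]$ with $[p_X][h]=[f]$ and $[p_Y][h]=[g]$. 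The decisive step, which simultaneously delivers uniqueness, is that the comparison is an \emph{isomorphism of posets}: I would prove that $[h]\leq[h']$ holds whenever $[p_X][h]\leq[p_X][h']$ and $[p_Y][h]\leq[p_Y][h']$, by converting the two witnesses $\Sigma_R\colon Z\to R$ and $\Sigma_S\colon Z\to S$ into a single $\Sigma\colon Z\to Q$ through the $Q$-diagram. As throughout, the only genuine difficulty is the failure of uniqueness in weak products; once $Q$ is chosen to carry $q_0,q_1$ as part of its defining data, every equation involved becomes strict and the remaining arguments are mechanical.
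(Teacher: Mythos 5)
Your proposal is correct and follows essentially the same route as the paper: your $Q$ is precisely the weak conical limit $\Gamma$ that the paper forms (your $q_0,q_1,\pi_R,\pi_S$ are its $\gamma_0,\gamma_1,\bar{\pi}_X,\bar{\pi}_Y$), the terminal object $(T,T\times T)$ is constructed identically, and the four verification steps (order-reflexivity, transitivity, existence of pairings, joint order-monicity of the projections) proceed by the same cone-chasing arguments. Your observation about why a naive weak product of $R$ and $S$ would fail is a nice motivating remark, but the construction itself coincides with the paper's.
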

\begin{proof}
	Consider objects $(X,R)$ and $(Y,S)$ in $\ca{C}_{\rm{ex}}$. Take a weak binary product $\begin{tikzcd}[cramped] X & P\ar[l,"\pi_X"']\ar[r,"\pi_Y"] & Y\end{tikzcd}$ in $\ca{C}$ and then form the following weak conical limit in $\ca{C}$
	\begin{center}
		\begin{tikzcd}
			& & & \Gamma\ar[dll,dashed,"\bar{\pi}_X"']\ar[drr,dashed,"\bar{\pi}_Y"]\ar[dddl,dashed,"\gamma_0"']\ar[dddr,dashed,"\gamma_1"] & & & \\
			& R\ar[dl,"r_0"']\ar[dr,"r_1"] & & & & S\ar[dl,"s_0"']\ar[dr,"s_1"] & \\
			X & & X & & Y & & Y  \\
			& & P\ar[ull,"\pi_X"]\ar[urr,"\pi_Y"',near start] & &  P\ar[ull,"\pi_X",near start]\ar[urr,"\pi_Y"'] & & 
		\end{tikzcd}
	\end{center}
	We claim that $\begin{tikzcd}[cramped] \Gamma\ar[r,shift left=0.75ex,"\gamma_0"]\ar[r,shift right=0.75ex,"\gamma_1"'] & P
	\end{tikzcd}$ is a pseudocongruence in $\ca{C}$ and that then \\
	$\begin{tikzcd}[cramped] (X,R) & (P,\Gamma)\ar[l,"{[\pi_X]}"']\ar[r,"{[\pi_Y]}"] & (Y,S)
	\end{tikzcd}$ is  a product diagram in $\ca{C}_{\rm{ex}}$.First, consider any $a_{0}\leq a_{1}\colon A\to P$ in $\ca{C}$. Then we have both $\pi_{X}a_{0}\leq \pi_{X}a_{1}$ and $\pi_{Y}a_{0}\leq \pi_{Y}a_{1}$, so by order-reflexivity of $R$ and $S$ we get morphisms $u\colon A\to R$ and $v\colon A\to S$ such that $r_{0}u=\pi_{X}a_{0}$, $r_{1}u=\pi_{X}a_{1}$ and $s_{0}v=\pi_{Y}a_{0}$, $s_{1}v=\pi_{Y}a_{1}$. Now 
	\begin{center}
		\begin{tikzcd}
			& & & A\ar[dll,dashed,"u"']\ar[drr,dashed,"v"]\ar[dddl,dashed,"a_0"']\ar[dddr,dashed,"a_1"] & & & \\		& R & & & & S & \\
			& &  & &  & &   \\
			& & P\ & &  P & & 
		\end{tikzcd}
	\end{center}
	forms a cone over the above diagram, hence there is a $w\colon A\to\Gamma$ such that $\pi_{R}w=u$, $\pi_{S}w=v$, $\gamma_{0}w=a_0$ and $\gamma_{1}w=a_{1}$.
	
	To prove transitivity of $\Gamma$, observe first that we have $r_{0}(\bar{\pi}_X d_{1}^{\Gamma})=\pi_{X}\gamma_{0}d_{1}^{\Gamma}=\pi_{X}\gamma_{1}d_{0}^{\Gamma}=r_{1}(\bar{\pi}_X d_{0}^{\Gamma})$, and so there is a $u_{R}\colon\Gamma*\Gamma\to R*R$ making the following commute:
	\begin{center}
		\begin{tikzcd}
			\Gamma*\Gamma\ar[dr,dashed,"u_R"]\ar[ddr,"\bar{\pi}_X d_{1}^{\Gamma}"',bend right]\ar[drr,"\bar{\pi}_X d_{0}^{\Gamma}",bend left] & & \\
			& R*R\ar[r,"d_{0}^{R}"]\ar[d,"d_{1}^{R}"'] & R\ar[d,"r_1"] \\
			& R\ar[r,"r_0"'] & X
		\end{tikzcd}
	\end{center}
	Similarly, we get $u_{S}\colon \Gamma*\Gamma\to S*S$ 
	\begin{center}
		\begin{tikzcd}
			\Gamma*\Gamma\ar[dr,dashed,"u_S"]\ar[ddr,"\bar{\pi}_Y d_{1}^{\Gamma}"',bend right]\ar[drr,"\bar{\pi}_Y d_{0}^{\Gamma}",bend left] & & \\
			& S*S\ar[r,"d_{0}^{S}"]\ar[d,"d_{1}^{S}"'] & S\ar[d,"s_1"] \\
			& S\ar[r,"s_0"'] & Y
		\end{tikzcd}
	\end{center}
	Now we have the following cone over our initial diagram defining $\Gamma$:
	\begin{center}
		\begin{tikzcd}
			& & & \Gamma*\Gamma\ar[dll,dashed,"\tau^{R}u_R"']\ar[drr,dashed,"\tau^{S}u_S"]\ar[dddl,dashed,"\gamma_{0}d_{0}^{\Gamma}"']\ar[dddr,dashed,"\gamma_{1}d_{1}^{\Gamma}"] & & & \\		& R & & & & S & \\
			& &  & &  & &   \\
			& & P\ & &  P & & 
		\end{tikzcd}
	\end{center}
	Thus, there exists a $\tau^{\Gamma}\colon \Gamma*\Gamma\to\Gamma$ such that $\bar{\pi}_X\tau^{\Gamma}=\tau^{R}u_R$, $\bar{\pi}_Y\tau^{\Gamma}=\tau^{S}u_S$, $\gamma_{0}\tau^{\Gamma}=\gamma_{0}d_{0}^{\Gamma}$ and $\gamma_{1}\tau^{\Gamma}=\gamma_{1}d_{1}^{\Gamma}$. In particular, $\tau^{\Gamma}$ exhibits transitivity of $\Gamma$.
	
	Next, consider any pair of morphisms $\begin{tikzcd}[cramped] (X,R) & (Z,T)\ar[l,"{[f]}"']\ar[r,"{[g]}"] & (Y,S)
	\end{tikzcd}$ in $\ca{C}_{\rm{ex}}$. First of all, by the weak product property of $P$, we have an $h\colon Z\to P$ with $\pi_{X}h=f$ and $\pi_{Y}h=g$. Then we have the following cone over the diagram defining $\Gamma$ in $\ca{C}$,
	\begin{center}
		\begin{tikzcd}
			& & & T\ar[dll,dashed,"\bar{f}"']\ar[drr,dashed,"\bar{g}"]\ar[dddl,dashed,"ht_0"']\ar[dddr,dashed,"ht_1"] & & & \\		& R & & & & S & \\
			& &  & &  & &   \\
			& & P\ & &  P & & 
		\end{tikzcd}
	\end{center}
	simply because $[f],[g]$ are morphisms in $\ca{C}_{\rm{ex}}$. Thus, we obtain an $\bar{h}\colon T\to\Gamma$ such that $\bar{\pi}_X\bar{h}=\bar{f}$, $\bar{\pi}_Y\bar{h}=\bar{g}$, $\gamma_{0}\bar{h}=ht_0$ and $\gamma_{1}\bar{h}=ht_1$. The last two equalities say precisely that $h$ defines a morphism $[h]\colon(Z,T)\to(P,\Gamma)$ and then we immediately have $[\pi_{X}]\circ[h]=[f]$ and $[\pi_{Y}]\circ[h]=[g]$.
	
	Finally, consider arrows $\begin{tikzcd}[cramped](Z,T)\ar[r,"{[h]}",shift left=0.75ex]\ar[r,"{[h']}"',shift right=0.75ex] & (P,\Gamma) \end{tikzcd}$ such that $[\pi_{X}]\circ[h]\leq[\pi_{X}]\circ[h']$ and $[\pi_{Y}]\circ[h]\leq[\pi_{Y}]\circ[h']$. This means that there are $\Sigma_{R}\colon Z\to R$ with $r_{0}\Sigma_{R}=\pi_{X}h$, $r_{1}\Sigma_{R}=\pi_{X}h'$ and $\Sigma_{S}\colon Z\to S$ with $s_{0}\Sigma_{S}=\pi_{Y}h$, $s_{1}\Sigma_{S}=\pi_{Y}h'$. Then again we have a cone
	\begin{center}
		\begin{tikzcd}
			& & & Z\ar[dll,dashed,"\Sigma_R"']\ar[drr,dashed,"\Sigma_S"]\ar[dddl,dashed,"h"']\ar[dddr,dashed,"{h'}"] & & & \\		& R & & & & S & \\
			& &  & &  & &   \\
			& & P\ & &  P & & 
		\end{tikzcd}
	\end{center}
	and so we get a $\Sigma\colon Z\to\Gamma$ such that $\bar{\pi}_X\Sigma=\Sigma_{R}$, $\bar{\pi}_Y\Sigma=\Sigma_{S}$, $\gamma_{0}\Sigma=h$ and $\gamma_{1}\Sigma=h'$. The last two equalities yield $[h]\leq[h']$ in $\ca{C}_{\rm{ex}}$.
	\vspace{1cm}
	
	It remains to construct a terminal object for $\ca{C}_{\rm{ex}}$. Let $T\in\ca{C}$ be a weak terminal object and take a weak binary product $\begin{tikzcd}[cramped] T & T\times T\ar[l,"\pi_{0}"']\ar[r,"\pi_1"] & T \end{tikzcd}$ in $\ca{C}$. Then it is easy to see that $\begin{tikzcd}[cramped] T\times T\ar[r,"\pi_0",shift left=0.75ex]\ar[r,"\pi_1"',shift right=0.75ex] & T \end{tikzcd}$ is a pseudocongruence in $\ca{C}$. For any $(X,R)\in\ca{C}_{\rm{ex}}$, we first have a morphism $f\colon X\to T\in\ca{C}$, and then by the weak product property an $\bar{f}\colon R\to T\times T$ with $\pi_{0}\bar{f}=fr_0$ and $\pi_{1}\bar{f}=fr_1$. So we have a morphism $[f]\colon(X,R)\to(T,T\times T)$. If $[g]$ is any other such morphism, then the weak product gives a $\Sigma\colon X\to T\times T$ with $\pi_{0}\Sigma=f$ and $\pi_{1}\Sigma=g$, so that $[f]\leq[g]$. Symmetrically, we also have $[g]\leq[f]$.
\end{proof}

Combining \ref{inserters in C_ex} and \ref{products in C_ex}, we have that $\ca{C}_{\rm{ex}}$ has all finite limits. 

\begin{rmk}\label{commas and pullbacks in C_ex}
	Even though we can in principle construct any finite weighted limit via finite products and inserters, it will be useful for what follows to have a direct and explicit description of two particular types of such limits.
	\begin{enumerate}
		\item \underline{Comma squares}: To form a comma square of the form
		\begin{center}
			\begin{tikzcd}
				(C,\Gamma)\ar[r,"{[\pi_{Y}]}"]\ar[d,"{[\pi_{X}]}"']\ar[dr,phantom,"\leq"] & (Y,S)\ar[d,"{[g]}"] \\
				(X,R)\ar[r,"{[f]}"'] & (Z,T)
			\end{tikzcd}
		\end{center}
		in $\ca{C}_{\rm{ex}}$, we successively take the following weak conical limits in $\ca{C}$:
		\begin{center}
			\begin{tikzcd}
				X\ar[d,"f"'] & C\ar[l,dashed,"\pi_X"']\ar[r,dashed,"\pi_Y"]\ar[d,dashed,"\phi"] & Y\ar[d,"g"] \\
				Z & T\ar[l,"t_0"']\ar[r,"t_1"] & Z
			\end{tikzcd}
			\quad
			\begin{tikzcd}
				& & & \Gamma\ar[dll,dashed,"\bar{\pi}_X"']\ar[drr,dashed,"\bar{\pi}_Y"]\ar[dddl,dashed,"\gamma_0"']\ar[dddr,dashed,"\gamma_1"] & & & \\
				& R\ar[dl,"r_0"']\ar[dr,"r_1"] & & & & S\ar[dl,"s_0"']\ar[dr,"s_1"] & \\
				X & & X & & Y & & Y  \\
				& & C\ar[ull,"\pi_X"]\ar[urr,"\pi_Y"',near start] & &  C\ar[ull,"\pi_X",near start]\ar[urr,"\pi_Y"'] & & 
			\end{tikzcd}
		\end{center}
		The fact that $\begin{tikzcd}[cramped] \Gamma\ar[r,shift left=0.75ex,"\gamma_0"]\ar[r,shift right=0.75ex,"\gamma_1"'] & P
		\end{tikzcd}$ is a pseudocongruence in $\ca{C}$ follows exactly as in the proof of \ref{products in C_ex}. Indeed, note that said proof relied solely on the shape of the diagram defining $\Gamma$ and the fact that $R$ and $S$ are pseudocongruences. The inequality $[f]\circ[\pi_X]\leq[g]\circ[\pi_Y]$ is exhibited by the morphism $\phi$ and the commutativity of the first weak limit diagram above. 
		
		Now let $[x]\colon (A,V)\to(X,R)$ and $[y]\colon (A,V)\to(Y,S)$ be such that $[f]\circ[x]\leq[g]\circ[y]$ in $\ca{C}_{\rm{ex}}$. This means that there is a $\Sigma\colon A\to T$ such that $t_{0}\Sigma=fx$ and $t_{1}\Sigma=gy$. By the property of the first weak limit above, we obtain a $u\colon A\to C$ with $\pi_{X}u=x$, $\pi_{Y}u=y$ and $\phi u=\Sigma$. We then have the following cone over the second weak limit diagram above
		\begin{center}
			\begin{tikzcd}
				& & & V\ar[dll,dashed,"\bar{x}"']\ar[drr,dashed,"\bar{y}"]\ar[dddl,dashed,"uv_0"']\ar[dddr,dashed,"uv_1"] & & & \\		& R & & & & S & \\
				& &  & &  & &   \\
				& & C\ & &  C & & 
			\end{tikzcd}
		\end{center}
		Thus, there is a $\bar{u}\colon V\to\Gamma$ such that $\bar{\pi}_X\bar{u}=\bar{x}$, $\bar{\pi}_Y\bar{u}=\bar{y}$, $\gamma_{0}\bar{u}=uv_0$ and $\gamma_{1}\bar{u}=uv_1$. The latter two equalities show that $u$ defines a morphism $[u]\colon (A,V)\to(C,\Gamma)$ in $\ca{C}_{\rm{ex}}$ and then we immediately have $[\pi_{X}]\circ[u]=[x]$ and $[\pi_{Y}]\circ[u]=[y]$.
		
		Finally, consider arrows $\begin{tikzcd}[cramped](A,V)\ar[r,"{[u]}",shift left=0.75ex]\ar[r,"{[u']}"',shift right=0.75ex] & (C,\Gamma) \end{tikzcd}$ such that $[\pi_{X}]\circ[u]\leq[\pi_{X}]\circ[u']$ and $[\pi_{Y}]\circ[u]\leq[\pi_{Y}]\circ[u']$. 
		This means that there are $\Sigma_{R}\colon A\to R$ with $r_{0}\Sigma_{R}=\pi_{X}u$, $r_{1}\Sigma_{R}=\pi_{X}u'$ and $\Sigma_{S}\colon A\to S$ with $s_{0}\Sigma_{S}=\pi_{Y}u$, $s_{1}\Sigma_{S}=\pi_{Y}u'$. Then again we have a cone
		\begin{center}
			\begin{tikzcd}
				& & & A\ar[dll,dashed,"\Sigma_R"']\ar[drr,dashed,"\Sigma_S"]\ar[dddl,dashed,"u"']\ar[dddr,dashed,"{u'}"] & & & \\		& R & & & & S & \\
				& &  & &  & &   \\
				& & C\ & &  C & & 
			\end{tikzcd}
		\end{center}
		and so we get a $\Sigma\colon A\to\Gamma$ such that $\bar{\pi}_X\Sigma=\Sigma_{R}$, $\bar{\pi}_Y\Sigma=\Sigma_{S}$, $\gamma_{0}\Sigma=u$ and $\gamma_{1}\Sigma=u'$. The last two equalities say that $[u]\leq[u']$ in $\ca{C}_{\rm{ex}}$.
		\item \underline{Pullbacks}: The construction of a pullback square
		\begin{center}
			\begin{tikzcd}
				(P,\Gamma)\ar[r,"{[\pi_{Y}]}"]\ar[d,"{[\pi_{X}]}"']\ar[dr,phantom,"\leq"] & (Y,S)\ar[d,"{[g]}"] \\
				(X,R)\ar[r,"{[f]}"'] & (Z,T)
			\end{tikzcd}
		\end{center}
		proceeds almost verbatim as the construction of the comma square above, except that in the first step we define $P$ as a weak conical limit as follows:
		\begin{center}
			\begin{tikzcd}
				& & T\ar[dll,"t_0"']\ar[drr,"t_1"] & & \\
				Z & X\ar[l,"f"'] & P\ar[u,dashed,"\phi"]\ar[d,dashed,"\phi'"']\ar[r,dashed,"\pi_Y"]\ar[l,dashed,"\pi_X"'] & Y\ar[r,"g"] & Z \\
				& & T\ar[ull,"t_1"]\ar[urr,"t_0"'] & & 
			\end{tikzcd}
		\end{center}
		The required verification is then entirely analogous.
	\end{enumerate}
\end{rmk}

\begin{prop}\label{(so,ff) factorizations in C_ex}
	$\ca{C}_{\rm{ex}}$ has $(\textrm{effective epi},\textrm{order-mono})$ factorizations.
\end{prop}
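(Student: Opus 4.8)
The plan is to factor an arbitrary $[f]\colon(X,R)\to(Y,S)$ as
\[
(X,R)\xrightarrow{\ [1_X]\ }(X,\hat R)\xrightarrow{\ [f]\ }(Y,S),
\]
where the intermediate object is built from the \emph{pullback pseudocongruence} $\hat R\coloneqq f^{*}S$. Explicitly, I would take $\hat R$ to be the weak conical limit in $\ca{C}$ of the diagram $X\xrightarrow{f}Y\xleftarrow{s_0}S\xrightarrow{s_1}Y\xleftarrow{f}X$, with projections $\hat r_0,\hat r_1\colon\hat R\to X$ and $\sigma\colon\hat R\to S$ satisfying $s_0\sigma=f\hat r_0$ and $s_1\sigma=f\hat r_1$. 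That $\hat R\rightrightarrows X$ is again a pseudocongruence follows by transporting order-reflexivity and transitivity along $\sigma$ from $S$, in the same style as the verification for $\Gamma$ in \ref{products in C_ex}. The cone $(r_0,r_1,\bar f)$ over the defining diagram of $\hat R$ induces a map $R\to\hat R$ over $X$, so that $1_X$ constitutes a morphism $[1_X]\colon(X,R)\to(X,\hat R)$, while $\sigma$ witnesses that $f$ constitutes $[f]\colon(X,\hat R)\to(Y,S)$; their composite is plainly the original $[f]$.

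The conceptual core is a characterization of the order-monos of $\ca{C}_{\rm{ex}}$: I claim $[m]\colon(U,M)\to(V,N)$ is $\nc{ff}$ precisely when $M$ is the pullback pseudocongruence $m^{*}N$, equivalently when, for all $u_0,u_1\colon A\to U$, the pair $(mu_0,mu_1)$ factoring through $N$ forces $(u_0,u_1)$ to factor through $M$. For the direction needed below I would apply the order-reflection property of $[m]$ to the test object $(A,I_A)$: any $u\colon A\to U$ defines a morphism $(A,I_A)\to(U,M)$ by order-reflexivity of $M$, and $I_A$ is itself a pseudocongruence, so a witness that $(mu_0,mu_1)$ factors through $N$ reads as $[m][u_0]\leq[m][u_1]$, whence $[u_0]\leq[u_1]$ and $(u_0,u_1)$ factors through $M$. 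Granting the characterization, $\nc{ff}$-ness of $[f]\colon(X,\hat R)\to(Y,S)$ is immediate, since $\hat R=f^{*}S$ by construction: a witness that $(fu_0,fu_1)$ factors through $S$ is exactly the datum lifting $(u_0,u_1)$ through $\hat R$.

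The substantive step is that $[1_X]\colon(X,R)\to(X,\hat R)$ is an $\nc{so}$-morphism, for which I would verify the pullback-in-$\nc{Pos}$ condition directly against an arbitrary order-mono $[m]\colon(U,M)\to(V,N)$. Given a commuting square with top edge $[a]\colon(X,R)\to(U,M)$ and bottom edge $[b]\colon(X,\hat R)\to(V,N)$, the candidate diagonal filler is the \emph{same} underlying morphism $a$, and the whole point is to show that $a$ respects the larger congruence $\hat R$. To this end I would chain $N$-relations: the extension $\bar b\colon\hat R\to N$ of $[b]$ relates $b\hat r_0$ to $b\hat r_1$, while the witnesses of $[ma]=[b]$ relate $ma\hat r_0$ to $b\hat r_0$ and $b\hat r_1$ to $ma\hat r_1$; transitivity of $N$ then yields that $(ma\hat r_0,ma\hat r_1)$ factors through $N$, whence the characterization of $[m]$ gives a map $\hat a\colon\hat R\to M$ with $m_0\hat a=a\hat r_0$ and $m_1\hat a=a\hat r_1$, i.e.\ exactly the extension of $a$ over $\hat R$. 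Uniqueness of the filler, and the fact that the comparison map is not merely a bijection but an isomorphism of posets (so that the square is a genuine pullback in $\nc{Pos}$ and not only in $\Set$), both follow for free because $[m]$, being $\nc{ff}$, is order-monic and hence monic.

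The main obstacle is this $\nc{so}$-step, and within it the extension of the filler across $\hat R$: this is where the interaction between the lax, merely order-reflexive (and non-symmetric) nature of pseudocongruences and the transitivity chaining must be handled with care, since one may only compose $N$-relations in a fixed direction and must therefore keep track of both halves $ma\preccurlyeq b$ and $b\preccurlyeq ma$ of the hypothesis $[ma]=[b]$. I also expect the bookkeeping around weak limits—ensuring that every cone invoked genuinely factors through the relevant weak limit, and that the chosen transitivity witnesses can be strung together—to be the most delicate part of the otherwise routine verification.
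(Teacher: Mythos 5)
Your factorization is the same as the paper's: your intermediate object $(X,\hat R)$, with $\hat R$ the weak limit of $X\xrightarrow{f}Y\xleftarrow{s_0}S\xrightarrow{s_1}Y\xleftarrow{f}X$, is exactly the paper's $(X,I)$, and your verification that $[f]\colon(X,\hat R)\to(Y,S)$ is $\nc{ff}$ is the paper's argument (the weak limit property of $\hat R$ converts a witness for $[f][u]\leq[f][v]$ into one for $[u]\leq[v]$). Where you genuinely diverge is the $\nc{so}$-half. The paper proves that $[1_X]\colon(X,R)\to(X,\hat R)$ is an \emph{effective epimorphism}: it computes the kernel congruence of $[1_X]$ via the comma construction of Remark \ref{commas and pullbacks in C_ex} and checks that $[1_X]$ is its coinserter, then relies on the general fact that coinserters are $\nc{so}$-morphisms. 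You instead verify the orthogonality condition directly against an arbitrary order-mono $[m]\colon(U,M)\to(V,N)$, using as key lemma the characterization that $\nc{ff}$-morphisms of $\ca{C}_{\rm{ex}}$ reflect factorization through the congruences (proved by testing against the objects $(A,I_A)$, which is correct), together with a two-fold application of transitivity of $N$ to chain $ma\hat r_0\to b\hat r_0\to b\hat r_1\to ma\hat r_1$; you rightly note that both halves of the equality $[ma]=[b]$ are needed, since pseudocongruences are not symmetric. Your argument goes through: the diagonal filler is $a$ itself, the chaining plus the characterization show $a$ respects $\hat R$, and uniqueness and order-reflection of the filler do come for free from $[m]$ being $\nc{ff}$, so the square is a genuine pullback in $\nc{Pos}$. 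Your route is more self-contained, never invoking the implication that effective epimorphisms are $\nc{so}$-morphisms, and the characterization lemma for order-monos in $\ca{C}_{\rm{ex}}$ is of independent interest. What it buys less of: the paper's route identifies the first factor as the coinserter of its kernel congruence, structural information reused in the sequel (the pullback-stability proof starts from the fact that every effective epi of $\ca{C}_{\rm{ex}}$ is represented by an identity of $\ca{C}$); with your proof that effectiveness would still remain to be checked separately.
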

\begin{proof}
	Consider any $[f]\colon(X,R)\to(Y,S)\in\ca{C}_{\rm{ex}}$. Form the following weak conical limit in $\ca{C}$
	\begin{center}
		\begin{tikzcd}
			X\ar[d,"f"'] & I\ar[l,dashed,"i_0"']\ar[r,dashed,"i_1"]\ar[d,dashed,"\phi"] & X\ar[d,"f"] \\
			Y & S\ar[l,"s_0"']\ar[r,"s_1"] & Y
		\end{tikzcd}
	\end{center}
	First of all, $\begin{tikzcd}[cramped] I\ar[r,"i_0",shift left=0.75ex]\ar[r,"i_1"',shift right=0.75ex] & X
	\end{tikzcd}$ is a pseudocongruence in $\ca{C}$:
	Let $a_{0},a_{1}\colon A\to X\in \ca{C}$ with $a_0\leq a_1$. Then $fa_0\leq fa_1$, so there is a $u\colon A\to S$ with $s_{0}u=fa_0$ and $s_{1}u=fa_1$. Thus, by the weak limit property, there exists a $u'\colon A\to I$ such that $i_{0}u'=a_0$, $i_{1}u'=a_1$ and $\phi u'=u$.
	
	For transitivity, observe that $s_{0}\phi d_{1}^{I}=fi_{0}d_{1}^{I}=fi_{1}d_{0}^{I}=s_{1}\phi d_{0}^{I}$, so there is a $\bar{\phi}\colon I*I\to S*S$ such that $\phi d_{0}^{I}=d_{0}^{S}\bar{\phi}$ and $\phi d_{1}^{I}=d_{1}^{S}\bar{\phi}$.
	\vspace{0.5cm}
	
	Now we have a factorization $[f]=\begin{tikzcd}[cramped] (X,R)\ar[r,"{[1_X]}"] & (X,I)\ar[r,"{[f]}"] & (Y,S)
	\end{tikzcd}$ in $\ca{C}_{\rm{ex}}$. To see that $1_X$ indeed defines a morphism as indicated, use the $\bar{f}\colon R\to S$ which is furnished by the fact that $[f]$ is a morphism $(X,R)\to(Y,S)$. Then the weak limit property of $I$ yields a $\psi\colon R\to I$ with $i_{0}\psi=r_0$, $i_{1}\psi=r_1$ and $\phi\psi=\bar{f}$. The fact that $f$ also defines a morphism $(X,I)\to(Y,S)$ is exhibited by the morphism $\phi$.
	
	We claim that $\begin{tikzcd}[cramped] (X,I)\ar[r,"{[f]}"] & (Y,S)  \end{tikzcd}$ is an order mono and that $\begin{tikzcd}[cramped] (X,R)\ar[r,"{[1_X]}"] & (X,I)  \end{tikzcd}$ is an effective epi.
	
	For the first claim, consider a pair $\begin{tikzcd}[cramped] (Z,T)\ar[r,"{[u]}",shift left=0.75ex]\ar[r,"{[v]}"',shift right=0.75ex] & (X,I)
	\end{tikzcd}$ with $[f]\circ[u]\leq[f]\circ[v]$. Hence, there is a $\Sigma\colon Z\to S$ such that $s_{0}\Sigma=fu$ and $s_{1}\Sigma=fv$. Then by the weak limit property of $I$ we get a $\Sigma'\colon Z\to I$ with $i_{0}\Sigma'=u$, $i_{1}\Sigma'=v$ and $\phi\Sigma'=\Sigma$, so that $[u]\leq[v]$.
	
	To show that $[1_X]\colon (X,R)\to(X,I)$ is an effective epi, we will show that it (as it must be) is the coinserter of its kernel congruence. To form the latter, we follow the construction given in \ref{commas and pullbacks in C_ex}. In the present situation, since both underlying morphisms are $1_X$, the weak limit $C$ is actually $I$ itself. Thus, the kernel congruence is of the form $\begin{tikzcd}[cramped] (I,\Gamma)\ar[r,"{[i_0]}",shift left=0.75ex]\ar[r,"{[i_1]}"',shift right=0.75ex] & (X,R)
	\end{tikzcd}$. 
	
	Now let $[g]\colon (X,R)\to(Z,T)$ be such that $[g]\circ[i_0]\leq[g]\circ[i_1]$. Then there is a $\Sigma\colon I\to T$ with $t_{0}\Sigma=gi_0$ and $t_{1}\Sigma=gi_1$. But these equalities now imply that $g$ also defines a morphism $(X,I)\to(Z,T)$, which is clearly the desired factorization.
	\begin{center}
		\begin{tikzcd} (I,\Gamma)\ar[r,"{[i_0]}",shift left=0.75ex]\ar[r,"{[i_1]}"',shift right=0.75ex] & (X,R)\ar[r,"{[1_X]}"]\ar[dr,"{[g]}"'] & (X,I)\ar[d,dashed,"{[g]}"] \\
			& & (Z,T)
		\end{tikzcd}
	\end{center}
	
	Finally, if $\begin{tikzcd}[cramped] (X,I)\ar[r,"{[g]}",shift left=0.75ex]\ar[r,"{[h]}"',shift right=0.75ex] & (Z,T)
	\end{tikzcd}$ are such that $[g]\circ[1_X]\leq[h]\circ[1_X]$, then there is a $\Sigma'\colon X\to T$ with $t_{0}\Sigma'=g$ and $t_{1}\Sigma'=h$, which is to say that $[g]\leq[h]$ in $\ca{C}_{\rm{ex}}$.
\end{proof}

\begin{prop}
	Effective epis are stable under pullback in $\ca{C}_{\rm{ex}}$.
\end{prop}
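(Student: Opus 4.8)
The plan is to first isolate a workable description of the effective epimorphisms of $\ca{C}_{\rm{ex}}$ and then to verify, using the explicit pullback construction of \Cref{commas and pullbacks in C_ex}, that this description passes to pullback projections. It is convenient to work throughout with $\nc{so}$-morphisms: in any $\nc{Pos}$-category equipped with $(\nc{so},\nc{ff})$ factorizations these coincide with the effective epis. Indeed, any coinserter $e$ of a pair $(a,b)$ is left orthogonal to every order-mono $m$ (if $mu=ve$ then $m(ua)=v(ea)\leq v(eb)=m(ub)$, so $ua\leq ub$ and $u$ factors through $e$), hence is $\nc{so}$; conversely, by \Cref{(so,ff) factorizations in C_ex} an $\nc{so}$-morphism is the $\nc{so}$-part of its own factorization and so a coinserter.

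The description I aim to establish is that $[f]\colon(X,R)\to(Y,S)$ is an $\nc{so}$-morphism if and only if the representative $f$ admits a \emph{section modulo $S$}, meaning a morphism $g\colon Y\to X$ in $\ca{C}$ with $fg\sim 1_Y$ (so that $g$ witnesses both $fg\preccurlyeq 1_Y$ and $1_Y\preccurlyeq fg$). To prove this I would run the factorization $(X,R)\xrightarrow{[1_X]}(X,I)\xrightarrow{[f]}(Y,S)$ of \Cref{(so,ff) factorizations in C_ex} and note that $[f]$ is $\nc{so}$ exactly when its $\nc{ff}$-part $[f]\colon(X,I)\to(Y,S)$ is invertible; being an order-mono, this part is invertible as soon as it is a split epi, and unwinding a splitting is precisely a $g\colon Y\to X$ with $fg\sim 1_Y$, the compatibility of $g$ with the congruences $I$ and $S$ being forced by order-reflexivity and transitivity of $S$. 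The crucial subtlety is that $g$ lives at the level of $\ca{C}$ and need \emph{not} respect $R$, so the fact that $[f]$ is $\nc{so}$ does not make $[f]$ a split epimorphism of $\ca{C}_{\rm{ex}}$; this is essential, since over $\nc{Pos}$ a split epi need not be an effective epi.

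With the criterion in hand the stability is a short computation. Let $[g]\colon(Y,S)\to(Z,T)$ be an $\nc{so}$-morphism, so there is $k\colon Z\to Y$ with $gk\sim 1_Z$, witnessed by $\tau_{1},\tau_{2}\colon Z\to T$ with $t_{0}\tau_{1}=gk,\ t_{1}\tau_{1}=1_Z$ and $t_{0}\tau_{2}=1_Z,\ t_{1}\tau_{2}=gk$. Let $[f]\colon(X,R)\to(Z,T)$ be arbitrary and form the pullback $(P,\Gamma)$ exactly as in \Cref{commas and pullbacks in C_ex}, with $\pi_X\colon P\to X$, $\pi_Y\colon P\to Y$ and the two maps $\phi,\phi'\colon P\to T$ encoding $f\pi_X\sim g\pi_Y$. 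I would feed the weak limit defining $P$ the cone over $X$ whose $X$-leg is $1_X$, whose $Y$-leg is $kf\colon X\to Y$, and whose two $T$-legs are $\tau_{1}f$ and $\tau_{2}f$; these satisfy the cone equations precisely because $gk\sim 1_Z$ yields $gkf\sim f$ after precomposing $\tau_{1},\tau_{2}$ with $f$. This produces $\ell\colon X\to P$ with $\pi_X\ell=1_X$, whence $\pi_X\ell\sim 1_X$, and the criterion shows that $[\pi_X]\colon(P,\Gamma)\to(X,R)$ is an $\nc{so}$-morphism, i.e. an effective epi. Note that $\ell$ is only required to be a morphism of $\ca{C}$, so no compatibility with $\Gamma$ or $R$ must be checked.

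The main obstacle lies entirely in the first step: proving the section-modulo-$S$ characterization, and in particular checking that a representative-level section $g$ automatically respects the kernel congruence $I$, so that it genuinely inverts the $\nc{ff}$-part. Once this is secured the pullback-stability is immediate, since the witness $k$ is simply transported through the weak limit $P$; the only care needed is to keep the section at the level of $\ca{C}$, which is exactly what reflects the failure of split and effective epimorphisms to coincide over $\nc{Pos}$.
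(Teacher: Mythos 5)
Your proposal is correct, but it is organized quite differently from the paper's proof, so let me compare. The paper first invokes \Cref{(so,ff) factorizations in C_ex} to replace the effective epi being pulled back by an isomorphic one of the form $[1_X]\colon(X,R)\to(X,T)$, forms the pullback along $[g]\colon(Y,S)\to(X,T)$, and shows the $\nc{ff}$-part of the projection $[\pi_Y]$ is split epi by feeding the weak limit $P$ the cone $(g,\,1_Y,\,\bar{g}\delta^{S},\,\bar{g}\delta^{S})$, where $\delta^{S}$ is the common splitting of $(s_0,s_1)$ coming from order-reflexivity; the compatibility check $S\to I$ is then immediate because the section satisfies $\pi_{Y}m=1_Y$ on the nose. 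You avoid the normalization entirely by proving a section-modulo-congruence characterization of $\nc{so}$-morphisms ($[f]\colon(X,R)\to(Y,S)$ is $\nc{so}$ iff some representative-level $g\colon Y\to X$ has $fg\sim 1_Y$), after which stability is just transport of the witnesses $\tau_1,\tau_2$ through the weak limit defining $P$. Your route costs more up front (both directions of the characterization need work), but it buys a reusable lemma — the $\nc{Pos}$-analogue of the classical Carboni--Vitale description of the regular epis of $\ca{C}_{\rm{ex}}$ — and a stability argument that applies uniformly to any effective epi without choosing a special representative. Both proofs ultimately rest on the same mechanism: a section of the order-monic part of the factorization, built from the weak limit property of $P$, plus ``split epi and order-mono implies iso''.

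Two points should be addressed. First, the step you single out as the main obstacle does go through, and it is worth recording the mechanism: from witnesses $\Sigma_1,\Sigma_2\colon Y\to S$ of $fg\sim 1_Y$, two applications of the transitivity morphism $\tau^{S}$ (through weak pullbacks $S*S$, composing $\Sigma_1 s_0$, $1_S$ and $\Sigma_2 s_1$) produce $\sigma\colon S\to S$ with $s_0\sigma=fgs_0$ and $s_1\sigma=fgs_1$; the weak limit property of $I$ then yields $\bar{g}\colon S\to I$ with $i_0\bar{g}=gs_0$, $i_1\bar{g}=gs_1$. Note that only transitivity of $S$ is used here, not order-reflexivity; and in the stability application the lemma is applied to $\ell$ with $\pi_X\ell=1_X$ exactly, where the compatibility is trivial ($\sigma=1_R$ works), so the heavy case is needed only for extracting $k$ from the hypothesis that $[g]$ is $\nc{so}$. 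Second, your justifying aside ``over $\nc{Pos}$ a split epi need not be an effective epi'' is false in the relevant setting: in any regular $\nc{Pos}$-category --- in particular in $\ca{C}_{\rm{ex}}$, and in $\nc{Pos}$ itself --- a split epi \emph{is} an effective epi, since the order-monic part of its $(\nc{so},\nc{ff})$ factorization is then a split epi and a mono, hence invertible. The genuine subtlety is the converse, which is exactly what your argument respects: an effective epi of $\ca{C}_{\rm{ex}}$ need not be split, because its section exists only at the level of representatives and modulo the codomain congruence, and need not respect $R$. This mis-statement is purely motivational and not load-bearing, so the proof itself stands.
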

\begin{proof}
	By the proof of \ref{(so,ff) factorizations in C_ex}, we know that every effective epi in $\ca{C}_{\rm{ex}}$ is isomorphic to one whose underlying morphism in $\ca{C}$ is an identity. Thus, it suffices to consider a pullback square of the form
	\begin{center}
		\begin{tikzcd}
			(P,\Gamma)\ar[r,"{[\pi_Y]}"]\ar[d,"{[\pi_X]}"'] & (Y,S)\ar[d,"{[g]}"] \\
			(X,R)\ar[r,"{[1_X]}"'] & (X,T)
		\end{tikzcd}
	\end{center}
	Following the construction of pullbacks given in \ref{commas and pullbacks in C_ex}, we must successively take the following weak conical limits in $\ca{C}$:
	\begin{center}
		\begin{tikzcd}
			& & T\ar[dll,"t_0"']\ar[drr,"t_1"] & & \\
			X & X\ar[l,equal] & P\ar[u,dashed,"\phi"]\ar[d,dashed,"\phi'"']\ar[r,dashed,"\pi_Y"]\ar[l,dashed,"\pi_X"'] & Y\ar[r,"g"] & X \\
			& & T\ar[ull,"t_1"]\ar[urr,"t_0"'] & & 
		\end{tikzcd}
		\quad
		\begin{tikzcd}
			& & & \Gamma\ar[dll,dashed,"\bar{\pi}_X"']\ar[drr,dashed,"\bar{\pi}_Y"]\ar[dddl,dashed,"\gamma_0"']\ar[dddr,dashed,"\gamma_1"] & & & \\
			& R\ar[dl,"r_0"']\ar[dr,"r_1"] & & & & S\ar[dl,"s_0"']\ar[dr,"s_1"] & \\
			X & & X & & Y & & Y  \\
			& & P\ar[ull,"\pi_X"]\ar[urr,"\pi_Y"',near start] & &  P\ar[ull,"\pi_X",near start]\ar[urr,"\pi_Y"'] & & 
		\end{tikzcd}
	\end{center}
	We will show that the order-monic part of the factorization is an iso. Recall by the previous proposition how this is constructed: take a weak conical limit 
	\begin{center}
		\begin{tikzcd}
			P\ar[d,"\pi_Y"'] & I\ar[l,dashed,"i_0"']\ar[r,dashed,"i_1"]\ar[d,dashed,"{\phi''}"] & P\ar[d,"\pi_Y"] \\
			Y & S\ar[l,"s_0"]\ar[r,"s_1"'] & Y
		\end{tikzcd}
	\end{center}
	Then the factorization is given by $[\pi_Y]=\begin{tikzcd}[cramped] (P,\Gamma)\ar[r,two heads,"{[1_P]}"] & (P,I)\ar[r,tail,"{[\pi_Y]}"] & (Y,S)\end{tikzcd}$. We will now show that $[\pi_Y]\colon (P,I)\rightarrowtail(Y,S)$ is also a split epi.
	
	First, observe that we have the following cone in $\ca{C}$,
	\begin{center}
		\begin{tikzcd}
			& T &  \\
			X &  Y\ar[u,dashed,"\bar{g}\delta^{S}"]\ar[d,dashed,"\bar{g}\delta^{S}"']\ar[r,dashed,"1_Y"]\ar[l,dashed,"g"'] & Y \\
			& T & 
		\end{tikzcd}
	\end{center}
	where $\delta^{S}$ is a common splitting of $s_0$,$s_1$ (furnished by reflexivity). Thus, there is an $m\colon Y\to P$ such that $\phi m=\bar{g}\delta^{S}=\phi'm$, $\pi_{X}m=g$ and $\pi_{Y}m=1_Y$. So in turn it suffices to show that $m$ defines a morphism $(Y,S)\to(P,I)$ in $\ca{C}_{\rm{ex}}$. But since $\pi_{Y}(ms_0)=s_0$ and $\pi_{Y}(ms_1)=s_1$, by the definition of $I$ as a weak limit we get an $\bar{m}\colon S\to I$ such that $\phi''\bar{m}=1_S$, $i_{0}\bar{m}=ms_0$ and $i_{1}\bar{m}=ms_1$. This completes the proof.
\end{proof}

\begin{prop}\label{Effective Congruences in C_ex}
	Congruences in $\ca{C}_{\rm{ex}}$ are effective.
\end{prop}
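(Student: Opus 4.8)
The plan is to imitate the classical argument: from a congruence on an object $(X,R)$ I extract a pseudocongruence on the single object $X$ that ``contains'' $R$, and I realise the given congruence as the kernel congruence (lax kernel) of the resulting quotient map. Since the preceding results already show $\ca{C}_{\rm{ex}}$ is regular, this is all that remains for exactness. Concretely, denote the congruence by $\dc{E}$ and represent it by its jointly order-monic pair of legs $[d_0],[d_1]\colon (D,U)\rightrightarrows (X,R)$, arising from an order-mono $(D,U)\rightarrowtail (X,R)\times (X,R)$ computed as in \ref{products in C_ex}; write $d_0,d_1\colon D\to X$ for the underlying morphisms of $\ca{C}$. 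I claim that $d_0,d_1\colon D\rightrightarrows X$ is itself a pseudocongruence, yielding an object $(X,D)\in\ca{C}_{\rm{ex}}$, that $q\coloneqq [1_X]\colon (X,R)\to (X,D)$ is a well-defined morphism, and that the lax kernel $q/q$ of $q$ is exactly $\dc{E}$.

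\emph{Step 1 (the quotient datum).} Each fact needed about $(X,D)$ and about the inclusion $R\subseteq D$ has a clean source at the level of $\ca{C}_{\rm{ex}}$: order-reflexivity of $\dc{E}$ (the containment $I_{(X,R)}\le\dc{E}$) applied to $a_0\le a_1\colon A\to X$, viewed as a comparison $[a_0]\le[a_1]\colon (A,I_A)\to (X,R)$, produces a lift into $(D,U)$; the same principle applied to $[r_0]\le[r_1]\colon (R,I_R)\to (X,R)$ (comparable via $1_R$) produces the map $\rho\colon R\to D$ witnessing $R\subseteq D$, hence making $q$ a morphism; and transitivity of $\dc{E}$ produces the transitivity operation on $D$. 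The real work is to \emph{strictify} these data: every factorization supplied by a universal property in $\ca{C}_{\rm{ex}}$ holds only up to the preorder $\preccurlyeq$, that is, up to the relation $R$, whereas being a pseudocongruence in $\ca{C}$ demands the on-the-nose equalities $d_0w=a_0$, $d_1w=a_1$ and an honest $\tau^{D}\colon D*D\to D$ with $d_0\tau^{D}=d_0 d_0^{D}$, $d_1\tau^{D}=d_1 d_1^{D}$. One removes the slack by using that $D$, as a relation on $X$, is closed under pre- and post-composition with $R$: this is exactly the content of $R\subseteq D$ together with transitivity of $D$ and the order-ideal character of $\dc{E}$, and it lets one absorb any $R$-discrepancy back into $D$.

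\emph{Step 2 (the kernel congruence).} Because the underlying morphism of $q$ is an identity, \ref{commas and pullbacks in C_ex} presents $q/q$ as a relation whose legs are again $[d_0],[d_1]$, exactly as in the effectiveness argument inside the proof of \ref{(so,ff) factorizations in C_ex}. For $\dc{E}\le q/q$ it suffices to check $q[d_0]\le q[d_1]$, and this holds because $1_D\colon D\to D$ exhibits $[d_0]\preccurlyeq[d_1]$ as morphisms into $(X,D)$, the two legs of the codomain pseudocongruence being $d_0,d_1$ themselves. Conversely, if $[u],[v]\colon (A,V)\rightrightarrows (X,R)$ satisfy $q[u]\le q[v]$, the witnessing $\Sigma\colon A\to D$ with $d_0\Sigma=u$, $d_1\Sigma=v$ defines a morphism $(A,V)\to (D,U)$ through which $([u],[v])$ factors, giving $q/q\le\dc{E}$; verifying that $\Sigma$ is genuinely a morphism uses the pseudocongruence structure of $U$ on $D$ once more. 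As both are relations, mutual containment yields $\dc{E}=q/q$, so $\dc{E}$ is effective.

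\emph{Main obstacle.} The skeleton above is short, but essentially all the difficulty is concentrated in the strictification of Step 1. This is the enriched analogue of the most delicate lemma in Vitale's treatment, and the extra bookkeeping forced by the order-reflexivity and order-ideal conditions—absent in the ordinary setting—is precisely what makes it the crux of the argument.
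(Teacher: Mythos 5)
Your Step 2 is fine in outline, but Step 1 contains a genuine gap, and it is not mere bookkeeping: the claim that the underlying span $d_0,d_1\colon D\rightrightarrows X$ of a jointly order-monic representative of $\dc{E}$ is a pseudocongruence, or can be ``strictified'' into one, is false. Take $\ca{C}=\nc{Pos}$, let $X=\{a,b\}$ be discrete and let $R$ be the full relation $X\times X$ with its projections, so that $(X,R)$ is a terminal object of $\ca{C}_{\rm{ex}}$; let $\dc{E}$ be the unique congruence on it, represented by the order-mono $(D,U)\rightarrowtail(X,R)\times(X,R)$ with $D=U=1$ and $d_0=a$, $d_1=b$. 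This is a perfectly legitimate jointly order-monic representative, yet the pair $(a,a)\colon 1\to X$ (with $a\le a$) factors through $(d_0,d_1)$ only up to $R$, never on the nose, so the span is not order-reflexive; likewise there is no $\rho\colon R\to D$ with $d_i\rho=r_i$, so your containment ``$R\subseteq D$'' fails strictly, as does the closure of $D$ under pre-/post-composition with $R$ that you invoke to absorb the slack. The underlying circularity is this: every datum the congruence axioms hand you (order-reflexivity, transitivity, the order-ideal property) is an equation in $\ca{C}_{\rm{ex}}$, i.e.\ an equation holding only up to $R$, and converting such data into the strict equations demanded of a pseudocongruence in $\ca{C}$ requires exactly the strict closure property you are trying to establish --- which, as the example shows, can fail outright for the given representative.

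The paper's proof avoids this by changing the object rather than sharpening the proof: instead of the representing span itself (in the paper's notation $(h_0,h_1)$, a relation on $(Y,S)$), it forms a weak conical limit $S'$ whose ``elements'' consist of a point of $X$ together with four $S$-witnesses identifying its two images in $Y$, up to $S$-equivalence, with a given pair of points of $Y$ --- in effect the $S$-saturation of the span, with the witnesses internalized as part of the data. Precisely because the witnesses are carried along, order-reflexivity and transitivity of $S'$ can be verified on the nose (this verification is the bulk of the paper's argument), and the congruence is then shown to be the kernel congruence of $[1_Y]\colon(Y,S)\to(Y,S')$. So the key idea missing from your proposal is that the quotient must be taken along this saturated relation, constructed as a weak limit in $\ca{C}$, not along the span of an order-monic representative; no after-the-fact strictification of the latter is possible.
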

\begin{proof}
	Consider a congruence $\begin{tikzcd}[cramped] (X,R)\ar[r,shift left=0.75ex,"{[h_0]}"]\ar[r,shift right=0.75ex,"{[h_1]}"'] & (Y,S) \end{tikzcd}$ in $\ca{C}_{\rm{ex}}$. Let us begin with analyzing what this entails in terms of arrows in $\ca{C}$.
	
	By the construction of comma squares given in \ref{commas and pullbacks in C_ex}, the comma of the pair of identity morphisms on $(Y,S)$ in $\ca{C}_{\rm{ex}}$ is precisely
	\begin{center}
		\begin{tikzcd}
			(S,\Gamma)\ar[r,"{[s_1]}"]\ar[d,"{[s_0]}"']\ar[dr,phantom,"\leq"] & (Y,S)\ar[d,equal] \\
			(Y,S)\ar[r,equal] & (Y,S)
		\end{tikzcd}
	\end{center} 
	So order-reflexivity of the congruence $([h_0],[h_1])$ means that there exists a morphism $[u]\colon(S,\Gamma)\to(X,R)$ such that $[h_0]\circ[u]=[s_0]$ and $[h_1]\circ[u]=[s_1]$. From these two equalities we respectively get morphisms $\Sigma_{0},\Sigma_{0}'\colon S\to S$ and $\Sigma_{1},\Sigma_{1}'\colon S\to S$ such that the following equalities hold:
	\begin{equation}\label{order-reflexivity equations 1}
		\begin{aligned}
			s_0\Sigma_0&=h_0u & s_0\Sigma_{0}'&=s_0 \\
			s_1\Sigma_0&=s_0       & s_1\Sigma_{0}'&=h_{0}u
		\end{aligned}
	\end{equation}
	\begin{equation}\label{order-reflexivity equations 2}
		\begin{aligned}
			s_0\Sigma_1&=h_1u & s_0\Sigma_{1}'&=s_1 \\
			s_1\Sigma_1&=s_1       & s_1\Sigma_{1}'&=h_{1}u
		\end{aligned}
	\end{equation}
	
	For transitivity, recall first that the pullback square
	\begin{center}
		\begin{tikzcd}
			(P,\Gamma)\ar[r,"{[\pi_0]}"]\ar[d,"{[\pi_1]}"'] & (X,R)\ar[d,"{[h_1]}"] \\
			(X,R)\ar[r,"{[h_0]}"'] & (Y,S)
		\end{tikzcd}
	\end{center}
	is constructed by first taking the following weak conical limit in $\ca{C}$:
	\begin{center}
		\begin{tikzcd}
			& & S\ar[dll,"s_0"']\ar[drr,"s_1"] & & \\
			Y & X\ar[l,"h_0"'] & P\ar[u,dashed,"\phi"]\ar[d,dashed,"\phi'"']\ar[r,dashed,"\pi_0"]\ar[l,dashed,"\pi_1"'] & X\ar[r,"h_1"] & Y \\
			& & S\ar[ull,"s_1"]\ar[urr,"s_0"'] & & 
		\end{tikzcd}
	\end{center}
	Now transitivity of $([h_0],[h_1])$ in $\ca{C}_{\rm{ex}}$ means that there exists a $[\tau]\colon (P,\Gamma)\to(X,R)$ such that $[h_0]\circ[\tau]=[h_0]\circ[\pi_0]$ and $[h_1]\circ[\tau]=[h_1]\circ[\pi_1]$. Hence, we respectively get morphisms $\psi_{0},\psi_{0}'\colon P\to S$ and  $\psi_{1},\psi_{1}'\colon P\to S$ such that
	\begin{equation}
		\begin{aligned}
			s_0\psi_0&=h_0\tau & s_0\psi_{0}'&=h_0\pi_0 \\
			s_1\psi_0&=h_0\pi_0       & s_1\psi_{0}'&=h_{0}\tau
		\end{aligned}
	\end{equation}
	\begin{equation}
		\begin{aligned}
			s_0\psi_1&=h_1\tau & s_0\psi_{1}'&=h_1\pi_1 \\
			s_1\psi_1&=h_1\pi_1       & s_1\psi_{1}'&=h_{1}\tau
		\end{aligned}
	\end{equation}
	\vspace{0.5cm}
	
	Form the following weak conical limit in $\ca{C}$:
	\begin{center}
		\begin{tikzcd}
			& S\ar[dl,"s_1"']\ar[d,"s_0"'] & S'\ar[l,dashed,"p_0"']\ar[ddl,dashed,"{p_{0}'}"',near start]\ar[d,dashed,"{s'}"]\ar[ddr,dashed,"{p_{1}'}",near start]\ar[r,dashed,"p_1"] & S\ar[d,"s_0"]\ar[dr,"s_1"] &  \\
			Y & Y & X\ar[l,"h_0"']\ar[r,"h_1"] & Y & Y \\
			& S\ar[ul,"s_0"]\ar[u,"s_1"] & & S\ar[u,"s_1"']\ar[ur,"s_0"'] & 
		\end{tikzcd}
	\end{center}
	We set $s_{0}'\coloneqq s_{0}p_{0}'=s_{1}p_0$ and $s_{1}'\coloneqq s_{1}p_1=s_{0}p_{1}'$. Now the claim, which will occupy most of this proof, is that $\begin{tikzcd}[cramped] S'\ar[r,shift left=0.75ex,"{s_0'}"]\ar[r,shift right=0.75ex,"{s_1'}"'] & Y\end{tikzcd}$ is a pseudocongruence on $Y$ in $\ca{C}$. Before proving this though, let us observe first that there is a cone over the diagram above
	\begin{center}
		\begin{tikzcd}
			S & S\ar[l,dashed,"\Sigma_0"']\ar[ddl,dashed,"{\Sigma_{0}'}"']\ar[d,dashed,"u"]\ar[ddr,dashed,"{\Sigma_{1}'}"]\ar[r,dashed,"\Sigma_1"] & S   \\
			& X &  \\
			S & & S 
		\end{tikzcd}
	\end{center}
	where all needed equalities are contained in \ref{order-reflexivity equations 1}, \ref{order-reflexivity equations 2}. Thus, there exists a $q\colon S\to S'$ such that $p_{0}q=\Sigma_{0}$, $p_{0}'q=\Sigma_{0}'$, $p_{1}'q=\Sigma_{1}'$ and $p_{1}q=\Sigma_{1}$. Then we have $s_{0}'q=s_{0}p_{0}'q=s_{0}\Sigma_{0}'=s_{0}$ and $s_{1}'q=s_{1}p_{1}q=s_{1}\Sigma_{1}=s_1$, which is to say that $1_Y$ will define a morphism $(Y,S)\to(Y,S')$ in $\ca{C}_{\rm{ex}}$. In fact, this will be the coinserter of the congruence $([h_0],[h_1])$ and so to show that the latter is effective we will show that it is the kernel congruence of said coinserter.
	
	In wading through the arguments below, it could be useful to the reader to bear in mind the following set-theoretic intuition for $S'$: as a relation on $Y$, $S'$ consists by definition of those pairs $(y,y')$ for which there exists an $x\in X$ such that the following hold
	\begin{displaymath}
		(h_0(x),y)\in S,\quad (y,h_0(x))\in S,\quad (y',h_1(x))\in S,\quad (h_1(x),y')\in S
	\end{displaymath}
	These are to say that in the quotient of $X$ by the preorder $S$ we have that $y$ becomes equal to some $h_0(x)$ and $y'$ becomes equal to the corresponding $h_1(x)$.
	\vspace{0.5cm}
	
	Let us start with the technically most involved part, which is showing that $S'$ is transitive. In the following two diagrams we have taken weak pullbacks $S*S$ and $S'*S'$ in $\ca{C}$ and it is easy to check that the solid parts commute. This then gives the induced dashed arrows.
	\begin{center}
		\begin{tikzcd}
			S'*S'\ar[dr,dashed,"a"]\ar[ddr,"{p_{0}'d_{1}^{S'}}"',bend right]\ar[drr,"p_{1}d_{0}^{S'}",bend left] & & \\
			& S*S\ar[r,"d_{0}^{S}"]\ar[d,"d_{1}^{S}"'] & S\ar[d,"s_1"] \\
			& S\ar[r,"s_0"'] & Y
		\end{tikzcd}
		\quad
		\begin{tikzcd}
			S'*S'\ar[dr,dashed,"b"]\ar[ddr,"{p_{1}'d_{0}^{S'}}"',bend right]\ar[drr,"p_{0}d_{1}^{S'}",bend left] & & \\
			& S*S\ar[r,"d_{0}^{S}"]\ar[d,"d_{1}^{S}"'] & S\ar[d,"s_1"] \\
			& S\ar[r,"s_0"'] & Y
		\end{tikzcd}
	\end{center}
	Now we have the following cone over the diagram defining $P$
	\begin{center}
		\begin{tikzcd}
			& & S\ar[dll,"s_0"']\ar[drr,"s_1"] & & \\
			Y & X\ar[l,"h_0"'] & S'*S'\ar[u,dashed,"\tau^{S}b"]\ar[d,dashed,"\tau^{S}a"']\ar[r,dashed,"{s'd_{0}^{S'}}"]\ar[l,dashed,"{s'd_{1}^{S'}}"'] & X\ar[r,"h_1"] & Y \\
			& & S\ar[ull,"s_1"]\ar[urr,"s_0"'] & & 
		\end{tikzcd}
	\end{center} 
	Thus, there exists a $c\colon S'*S'\to P$ such that
	\begin{equation*}
		\begin{aligned}
			\phi c&=\tau^{S}b & \phi'c&=\tau^{S}a \\
			\pi_1 c&=s'd_{1}^{S'}       & \pi_0 c&=s'd_{0}^{S'}
		\end{aligned}
	\end{equation*}
	Then using this morphism $c$ we have the following commutative diagrams involving weak pullbacks with the corresponding induced factorizations.
	\begin{center}
		\begin{tikzcd}
			S'*S'\ar[dr,dashed,"\omega_0"]\ar[ddr,"{p_{0}d_{0}^{S'}}"',bend right]\ar[drr,"\psi_{0}c",bend left] & & \\
			& S*S\ar[r,"d_{0}^{S}"]\ar[d,"d_{1}^{S}"'] & S\ar[d,"s_1"] \\
			& S\ar[r,"s_0"'] & Y
		\end{tikzcd}
		\quad
		\begin{tikzcd}
			S'*S'\ar[dr,dashed,"{\omega_{0}'}"]\ar[ddr,"{\psi_{0}'c}"',bend right]\ar[drr,"p_{0}'d_{0}^{S'}",bend left] & & \\
			& S*S\ar[r,"d_{0}^{S}"]\ar[d,"d_{1}^{S}"'] & S\ar[d,"s_1"] \\
			& S\ar[r,"s_0"'] & Y
		\end{tikzcd}
	\end{center}
	\begin{center}
		\begin{tikzcd}
			S'*S'\ar[dr,dashed,"\omega_1"]\ar[ddr,"{p_{1}d_{1}^{S'}}"',bend right]\ar[drr,"\psi_{1}c",bend left] & & \\
			& S*S\ar[r,"d_{0}^{S}"]\ar[d,"d_{1}^{S}"'] & S\ar[d,"s_1"] \\
			& S\ar[r,"s_0"'] & Y
		\end{tikzcd}
		\quad
		\begin{tikzcd}
			S'*S'\ar[dr,dashed,"{\omega_{1}'}"]\ar[ddr,"{\psi_{1}'c}"',bend right]\ar[drr,"p_{1}'d_{1}^{S'}",bend left] & & \\
			& S*S\ar[r,"d_{0}^{S}"]\ar[d,"d_{1}^{S}"'] & S\ar[d,"s_1"] \\
			& S\ar[r,"s_0"'] & Y
		\end{tikzcd}
	\end{center}
	Now we have the following cone over the diagram defining $S'$:
	\begin{center}
		\begin{tikzcd}
			S & S'*S'\ar[l,dashed,"\tau^{S}\omega_0"']\ar[ddl,dashed,"{\tau^{S}\omega_0'}"']\ar[d,dashed,"\tau c"]\ar[ddr,dashed,"{\tau^{S}\omega_1'}"]\ar[r,dashed,"\tau^{S}\omega_1"] & S   \\
			& X &  \\
			S & & S 
		\end{tikzcd}
	\end{center}
	Hence, we obtain a morphism $\tau^{S'}\colon S'*S'\to S'$ such that
	\begin{equation*}
		\begin{aligned}
			p_{0}\tau^{S'}&=\tau^{S}\omega_0 & p_{0}'\tau^{S'}&=\tau^{S}\omega_0' \\
			p_{1}\tau^{S'}&=\tau^{S}\omega_1       & p_{1}'\tau^{S'}&=\tau^{S}\omega_1'
		\end{aligned}
	\end{equation*}
	Then we also have:
	\begin{displaymath}
		s_{0}'\tau^{S'}=s_{0}p_{0}'\tau^{S'}=s_{0}\tau^{S}\omega_{0}'=s_{0}d_{0}^{S}\omega_{0}'=s_{0}p_{0}'d_{0}^{S'}=s_{0}'d_{0}^{S'}
	\end{displaymath}
	\begin{displaymath}
		s_{1}'\tau^{S'}=s_{1}p_{1}\tau^{S'}=s_{1}\tau^{S}\omega_{1}=s_{1}d_{1}^{S}\omega_{1}=s_{1}p_{1}d_{1}^{S'}=s_{1}'d_{1}^{S'}
	\end{displaymath}
	So, finally, $\tau^{S'}$ is the sought for transitivity of $\begin{tikzcd}[cramped] S'\ar[r,shift left=0.75ex,"{s_0'}"]\ar[r,shift right=0.75ex,"{s_1'}"'] & Y\end{tikzcd}$.
	\vspace{0.5cm}
	
	Order-reflexivity is not nearly as lengthy to check: consider any $y_{0},y_{1}\colon A\to Y\in\ca{C}$ with $y_0\leq y_1$. By order-reflexivity of $S$, the pair $(y_0,y_1)$ factors through $(s_0,s_1)$ via some $v\colon A\to S$. Then the following is easily seen to be a cone over the diagram defining $S'$:
	\begin{center}
		\begin{tikzcd}
			S & A\ar[l,dashed,"\Sigma_{0}v"']\ar[ddl,dashed,"{\Sigma_{0}'v}"']\ar[d,dashed,"uv"]\ar[ddr,dashed,"{\Sigma_{1}'v}"]\ar[r,dashed,"\Sigma_{1}v"] & S   \\
			& X &  \\
			S & & S 
		\end{tikzcd}
	\end{center}
	Thus, there is a $v'\colon A\to S$ such that $p_{0}v'=\Sigma_{0}v$, $s'v'=uv$, $p_{0}'v'=\Sigma_{0}'v$, $p_{1}'v'=\Sigma_{1}'v$ and $p_{1}v'=\Sigma_{1}v$. We then also have
	\begin{displaymath}
		s_{0}'v'=s_{0}p_{0}'v'=s_{0}\Sigma_{0}'v=s_{0}v=y_0
	\end{displaymath}
	\begin{displaymath}
		s_{1}'v'=s_{1}p_{1}v'=s_{1}\Sigma_{1}v=s_{1}v=y_1
	\end{displaymath}
	\vspace{0.5cm}
	
	Finally, now that we have established that $(Y,S')$ is a legitimate object of $\ca{C}_{\rm{ex}}$ and that there is a morphism $[1_Y]\colon (Y,S)\to(Y,S')$, we show that $\begin{tikzcd}[cramped] (X,R)\ar[r,shift left=0.75ex,"{[h_0]}"]\ar[r,shift right=0.75ex,"{[h_1]}"'] & (Y,S) \end{tikzcd}$ is the kernel congruence of this morphism. So consider any morphisms $\begin{tikzcd}[cramped] (Z,T)\ar[r,shift left=0.75ex,"{[f_0]}"]\ar[r,shift right=0.75ex,"{[f_1]}"'] & (Y,S) \end{tikzcd}$ in $\ca{C}_{\rm{ex}}$ with $[1_Y]\circ[f_0]\leq[1_Y]\circ[f_1]$. So there is  $\Sigma\colon Z\to S'\in\ca{C}$ with $s_{0}'\Sigma=f_0$ and $s_{1}'\Sigma=f_1$. Set $f\coloneqq s'\Sigma\colon Z\to X$. We claim that $f$ defines a morphism $(Z,T)\to(X,R)$ which is then clearly the desired factorization in $\ca{C}_{\rm{ex}}$.
	
	Note that $ft_0$ and $ft_1$ trivially define morphisms $(T,I_T)\to (X,R)$, where $I_T$ is a weak comma square of $(1_T,1_T)$ in $\ca{C}$. If we show that $[ft_0]\leq[ft_1]$, then this will entail that $f$ defines a morphism as wanted, since the two statements say exactly the same thing. In turn, the latter inequality is equivalent to the pair of inequalities $[h_0]\circ[ft_0]\leq[h_0]\circ[ft_1]$ and $[h_1]\circ[ft_0]\leq[h_1]\circ[ft_1]$, since $([h_0],[h_1])$ is a jointly order-monic pair in $\ca{C}_{\rm{ex}}$. To prove that these two inequalities indeed hold, consider the following two diagrams in succession.
	\begin{center}
		\begin{tikzcd}
			T\ar[dr,dashed,"a"]\ar[ddr,"\bar{f}_{0}"',bend right]\ar[drr,"p_{0}\Sigma t_0",bend left] & & \\
			& S*S\ar[r,"d_{0}^{S}"]\ar[d,"d_{1}^{S}"'] & S\ar[d,"s_1"] \\
			& S\ar[r,"s_0"'] & Y
		\end{tikzcd}
		\quad
		\begin{tikzcd}
			T\ar[dr,dashed,"b"]\ar[ddr,"{p_{0}'\Sigma t_1}"',bend right]\ar[drr,"\tau^{S}a",bend left] & & \\
			& S*S\ar[r,"d_{0}^{S}"]\ar[d,"d_{1}^{S}"'] & S\ar[d,"s_1"] \\
			& S\ar[r,"s_0"'] & Y
		\end{tikzcd}
	\end{center}
	Now the morphism $\tau^{S}b\colon T\to S$ satisfies
	\begin{displaymath}
		s_{0}(\tau^{S}b)=s_{0}d_{0}^{S}b=s_{0}\tau^{S}a=s_{0}d_{0}^{S}a=s_{0}p_{0}\Sigma t_0=h_{0}s'\Sigma t_0=h_{0}ft_0
	\end{displaymath}
	\begin{displaymath}
		s_{1}(\tau^{S}b)=s_{1}d_{1}^{S}b=s_{1}p_{0}'\Sigma t_1=h_{0}s'\Sigma t_1=h_{0}ft_1
	\end{displaymath}
	which proves that $[h_0]\circ[ft_0]\leq[h_0]\circ[ft_1]$. The second inequality follows in an entirely similar fashion.
	
	Thus, we indeed have a morphism $[f]\colon (Z,T)\to (X,R)$ in $\ca{C}_{\rm{ex}}$. We now observe that 
	\begin{align*}
		s_{0}(p_{0}\Sigma)=h_{0}s'\Sigma=h_{0}f &, \hspace{0.3cm} s_{1}(p_{0}\Sigma)=s_{0}'\Sigma=f_0 \\
		s_{0}(p_{0}'\Sigma)=s_{0}'\Sigma=f_0 &, \hspace{0.3cm} s_{1}(p_{0}'\Sigma)=h_{0}s'\Sigma=h_0 f
	\end{align*}
	which together show that $[h_0]\circ[f]=[f_0]$. Similarly, we also have $[h_1]\circ[f]=[f_1]$.
\end{proof}

Collecting together all of our results on $\ca{C}_{ex}$ so far we have the following.

\begin{thm}
	The category $\ca{C}_{\rm{ex}}$ is exact.
\end{thm}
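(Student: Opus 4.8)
The plan is to assemble the structural facts about $\ca{C}_{\rm{ex}}$ established in the preceding propositions directly into the definition of an exact category, which asks for two things: \emph{regularity}, and the \emph{effectivity} of every congruence. All the substantive verifications have by now been carried out, so the task here is essentially one of matching the conclusions of those propositions against the clauses of the definitions; I therefore do not expect a genuine obstacle, only some bookkeeping.

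First I would record finite completeness. As observed immediately after Proposition~\ref{products in C_ex}, the category $\ca{C}_{\rm{ex}}$ has finite products (Proposition~\ref{products in C_ex}) and inserters (Proposition~\ref{inserters in C_ex}), and over the base $\nc{Pos}$ every finite weighted limit is constructible from these two — equalizers, finite powers, comma objects and pullbacks all arise in this way — so $\ca{C}_{\rm{ex}}$ has all finite weighted limits. The explicit descriptions of comma objects and pullbacks collected in Remark~\ref{commas and pullbacks in C_ex} are what make the subsequent arguments tractable.

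Next I would deduce regularity. The cleanest route is the reformulation recalled in Section~2, whereby a finitely complete category is regular as soon as coinserters of kernel congruences exist and are stable under pullback. Proposition~\ref{(so,ff) factorizations in C_ex} supplies, for every morphism, a factorization whose left factor is the coinserter of its own kernel congruence and whose right factor is an order-mono, while the proposition immediately following it shows that effective epimorphisms are stable under pullback; together these yield precisely the stable $(\nc{so},\nc{ff})$-factorizations demanded by regularity. If instead one wishes to phrase regularity through the class of $\nc{so}$-morphisms, it suffices to note that in this factorization system every $\nc{so}$-morphism has an invertible $\nc{ff}$-part — the diagonal filler of the square it forms against its own order-mono factor is a two-sided inverse — so that $\nc{so}$-morphisms and effective epimorphisms coincide, and their pullback-stability is exactly the content of that proposition.

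Finally, Proposition~\ref{Effective Congruences in C_ex} establishes that every congruence in $\ca{C}_{\rm{ex}}$ is effective. Together with regularity this is verbatim the definition of exactness, so $\ca{C}_{\rm{ex}}$ is exact. The single point needing care in the write-up is the identification of the left class of the factorization system with the effective epimorphisms, which is what allows the pullback-stability proposition to feed directly into the regularity axiom.
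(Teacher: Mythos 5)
Your proposal is correct and follows exactly the paper's route: the theorem there is stated as a direct assembly of Propositions~\ref{inserters in C_ex}, \ref{products in C_ex}, \ref{(so,ff) factorizations in C_ex}, the pullback-stability proposition, and Proposition~\ref{Effective Congruences in C_ex}, just as you do. Your extra care in identifying the $\nc{so}$-morphisms with the effective epimorphisms (via the retract argument on the $\nc{ff}$-part) is a sound elaboration of a point the paper leaves to the reformulation of regularity recalled in Section~2, not a departure from its proof.
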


\begin{rmk}
	The astute reader might have noticed that in all constructions and proofs above we have almost exclusively used \emph{conical} weak finite limits. The only exception has been the need for weak comma objects for pairs of identity morphisms. However, it is not too hard to check that the existence of these particular types of weak limits already implies the existence of all weak finite limits. The same is of course true for their strong counterparts.
\end{rmk}

Next, we want to define a functor $\Gamma\colon\ca{C}\to\ca{C}_{\rm{ex}}$ which will be an embedding of $\ca{C}$ into $\ca{C}_{\rm{ex}}$. In order to do this, we pick for every object $X\in\ca{C}$ a weak comma square
\begin{center}
	\begin{tikzcd}
		I_{X}\ar[d,"i_{0}^{X}"']\ar[r,"i_{1}^{X}"]\ar[dr,phantom,"\leq"] & X\ar[d,equal] \\
		X\ar[r,equal] & X
	\end{tikzcd}
\end{center}
and set $\Gamma X\coloneqq(X,I_X)$. Now for any morphism $f\colon X\to Y\in\ca{C}$ we have that $i_{0}^{X}\leq i_{1}^{X}\implies fi_{0}^{X}\leq fi_{1}^{X}$, hence $(fi_{0}^{X}, fi_{1}^{X})$ factors through $I_{Y}$. Thus, we have a morphism $\Gamma f\coloneqq [f]\colon (X,I_X)\to(Y,I_Y)$ in $\ca{C}_{\rm{ex}}$. It is clear that this assignment preserves composition and identities. In addition, simply by definition of $I_Y$, we have for any $f,g\colon X\to Y\in\ca{C}$ that $f\leq g\iff\Gamma f\leq\Gamma g$. In other words, $\Gamma$ is a fully order-faithful functor $\ca{C}\to\ca{C}_{\rm{ex}}$.

\begin{rmk}
	Observe that $\Gamma$ is essentially independent of the choice of weak comma square for every $X\in\ca{C}$. Indeed, if $I_{X}'$ is another weak comma square of $(1_X,1_X)$ in $\ca{C}$, then $I_X$ and $I_{X}'$ factor through each other. This implies that $1_X$ defines morphisms $(X,I_X)\to(X,I_{X}')$ and $(X,I_{X}')\to(X,I_X)$ in $\ca{C}_{\rm{ex}}$, which are then clearly mutually inverse isomorphisms. These are also compatible with the definition of $\Gamma$ on morphisms. 
\end{rmk}

We end this section with a couple of observations concerning the manner in which $\ca{C}$ is embedded in $\ca{C}_{\rm{ex}}$. These are again precise enriched analogues of familiar facts about the ordinary exact completion \cite{Carboni-Vitale}. 

\begin{prop}\label{C generates C_ex via coinserters}
	Every morphism $[f]\colon(X,R)\to(Y,S)\in\ca{C}_{\rm{ex}}$ occurs in a commutative diagram of the following form
	\begin{center}
		\begin{tikzcd}
			\Gamma R\ar[r,shift left=0.75ex,"\Gamma r_0"]\ar[r,shift right=0.75ex,"\Gamma r_1"']\ar[d,"\Gamma\bar{f}"'] & \Gamma X\ar[d,"\Gamma f"]\ar[r,two heads,"{[1_X]}"] & (X,R)\ar[d,"{[f]}"] \\
			\Gamma S\ar[r,shift left=0.75ex,"\Gamma s_0"]\ar[r,shift right=0.75ex,"\Gamma s_1"'] & \Gamma Y\ar[r,two heads,"{[1_Y]}"] & (Y,S)
		\end{tikzcd}
	\end{center}
	where the rows are coinserters.
\end{prop}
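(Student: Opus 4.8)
\emph{Plan.} The statement has two halves: that the square commutes, and that each row is a coinserter. Commutativity is immediate and I would dispatch it first. The right-hand square commutes because both composites $[f]\circ[1_X]$ and $[1_Y]\circ\Gamma f$ have underlying $\ca{C}$-morphism $f$ and so represent the same morphism $(X,I_X)\to(Y,S)$; the left-hand square commutes because $\Gamma$ is a functor and the equalities $fr_0=s_0\bar f$, $fr_1=s_1\bar f$ witnessing that $[f]$ is a morphism say precisely that $\Gamma f\circ\Gamma r_i=\Gamma s_i\circ\Gamma\bar f$ for $i=0,1$. (That $[1_X]$ is itself a morphism $\Gamma X\to(X,R)$ uses only order-reflexivity of $R$ applied to $i_0^X\leq i_1^X$.) Thus the real content is the claim about the rows, and by symmetry it suffices to treat the top one: to show $[1_X]\colon\Gamma X\to(X,R)$ is the coinserter of $(\Gamma r_0,\Gamma r_1)\colon\Gamma R\rightrightarrows\Gamma X$.

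The key observation I would isolate is that the preorder $\preccurlyeq$ on $\ca{C}_{\rm{ex}}$-morphisms depends only on the underlying $\ca{C}$-morphisms and on the \emph{target} pseudocongruence, and is entirely insensitive to the source. With this in hand the three parts of the enriched coinserter universal property become re-readings of defining inequalities. For the inequality part, $[1_X]\circ\Gamma r_0$ and $[1_X]\circ\Gamma r_1$ are simply $[r_0],[r_1]\colon\Gamma R\to(X,R)$, and the witness $\Sigma=1_R$ gives $[r_0]\preccurlyeq[r_1]$, i.e.\ $[1_X]\circ\Gamma r_0\leq[1_X]\circ\Gamma r_1$.

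For the one-dimensional universal property, suppose $[g]\colon\Gamma X\to(Z,T)$ satisfies $[g]\circ\Gamma r_0\leq[g]\circ\Gamma r_1$. Unwinding, this inequality furnishes a $\Sigma\colon R\to T$ with $t_0\Sigma=gr_0$ and $t_1\Sigma=gr_1$ — but this is exactly the datum needed for $g$ to define a morphism $[\tilde g]\colon(X,R)\to(Z,T)$. Since $[\tilde g]\circ[1_X]$ again has underlying morphism $g$, it equals $[g]$, giving the required factorization. For the order-reflecting (two-dimensional) part, if $[k],[k']\colon(X,R)\to(Z,T)$ satisfy $[k]\circ[1_X]\leq[k']\circ[1_X]$, then because precomposition with $[1_X]$ leaves underlying morphisms unchanged and the order ignores the source congruence, the very witness of this inequality already shows $[k]\leq[k']$; uniqueness of the factorization above then follows by antisymmetry.

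Since every step merely re-reads a defining inequality, I do not anticipate any genuine obstacle here. The one point demanding care is stating the coinserter universal property correctly in the $\nc{Pos}$-weighted sense — namely that precomposition with $[1_X]$ must induce an order-\emph{embedding} of hom-posets onto those morphisms that order-coequalize the pair, not merely a bijection — together with bookkeeping of which pseudocongruence serves as source and which as target at each stage, so that the insensitivity of $\preccurlyeq$ to the source is applied legitimately.
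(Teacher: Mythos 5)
Your proposal is correct and follows essentially the same route as the paper's proof: commutativity is dispatched as immediate, the inequality $[1_X]\circ\Gamma r_0\leq[1_X]\circ\Gamma r_1$ is a tautology, the factorization exists because the hypothesis $[g]\circ\Gamma r_0\leq[g]\circ\Gamma r_1$ is literally the statement that $g$ defines a morphism out of $(X,R)$, and the two-dimensional part holds because the order $\preccurlyeq$ ignores the source pseudocongruence (the paper phrases this as $[1_X]$ being order-epimorphic). Your explicit isolation of that last observation is simply an unpacking of what the paper treats as clear.
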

\begin{proof}
	The diagram is clearly commutative, so it suffices to show that	the row
	\begin{tikzcd}[cramped]
		\Gamma R\ar[r,shift left=0.75ex,"\Gamma r_0"]\ar[r,shift right=0.75ex,"\Gamma r_1"'] & \Gamma X\ar[r,two heads,"{[1_X]}"] & (X,R)
	\end{tikzcd}
	is a coinserter in $\ca{C}_{\rm{ex}}$. It is a tautology that $[1_X]\circ\Gamma r_0\leq[1_X]\circ\Gamma r_1$. Now let $[f]\colon\Gamma X\to(Y,S)\in\ca{C}_{\rm{ex}}$ be such that $[f]\circ\Gamma r_0\leq[f]\circ\Gamma r_1$. This means that $(fr_0,fr_1)$ factors through $(s_0,s_1)$, which is the same as the statement that $f$ also defines a morphism $(X,R)\to(Y,S)$.
	\begin{center}
		\begin{tikzcd}
			\Gamma R\ar[r,shift left=0.75ex,"\Gamma r_0"]\ar[r,shift right=0.75ex,"\Gamma r_1"'] & \Gamma X\ar[r,two heads,"{[1_X]}"]\ar[dr,"{[f]}"'] &  (X,R)\ar[d,dashed,"{[f]}"] \\
			& &  (Y,S)  
		\end{tikzcd}
	\end{center}
	Finally, it is clear that $[1_X]$ is order-epimorphic.
\end{proof}

\begin{prop}
	For every $A\in\ca{C}$, the object $\Gamma A$ is projective in $\ca{C}_{\rm{ex}}$.
\end{prop}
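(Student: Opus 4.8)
The plan is to unwind the definition of $\nc{so}$-projectivity and exploit the minimality of the pseudocongruence $I_A$ defining $\Gamma A$. Concretely, I must show that for every $\nc{so}$-morphism $[e]\colon(Y,S)\to(Z,T)$ in $\ca{C}_{\rm{ex}}$ the induced monotone map
\[
	\ca{C}_{\rm{ex}}(\Gamma A,[e])\colon\ca{C}_{\rm{ex}}(\Gamma A,(Y,S))\longrightarrow\ca{C}_{\rm{ex}}(\Gamma A,(Z,T))
\]
is a surjection in $\nc{Pos}$. Since hom-functors are automatically locally monotone, and since the surjections in $\nc{Pos}$ are exactly the surjective monotone maps, it suffices to check that the \emph{underlying} function is onto, i.e. that every $[g]\colon\Gamma A\to(Z,T)$ admits a preimage under $[e]$.

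First I would reduce to a convenient normal form for the $\nc{so}$-morphism. Because $\ca{C}_{\rm{ex}}$ is regular, an $\nc{so}$-morphism is the same as an effective epi, and by the analysis carried out in the proof of \ref{(so,ff) factorizations in C_ex} every such morphism is isomorphic, in the arrow category, to one of the form $[1_Y]\colon(Y,S)\to(Y,S')$ whose underlying map in $\ca{C}$ is an identity; here the fact that $1_Y$ defines a morphism means precisely that $(s_0,s_1)$ factors through $(s_0',s_1')$. As $\ca{C}_{\rm{ex}}(\Gamma A,-)$ is a functor it sends this arrow-isomorphism to an isomorphism of monotone maps, and isomorphisms in $\nc{Pos}$ are surjective; so it is enough to treat the case $[e]=[1_Y]$.

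The main step is then the lifting, and this is where the minimality of $I_A$ does all the work: since $\Gamma A=(A,I_A)$ is built from the \emph{smallest} pseudocongruence on $A$, any $g\colon A\to Y$ in $\ca{C}$ already represents a morphism $\Gamma A\to(Y,S)$ for \emph{any} pseudocongruence $S$ on $Y$. Indeed, given $[g]\colon\Gamma A\to(Y,S')$ with underlying $g\colon A\to Y$, the defining inequality $i_0^A\leq i_1^A$ of the weak comma $I_A$ yields $g i_0^A\leq g i_1^A$, whence order-reflexivity of $S$ factors the pair $(g i_0^A,g i_1^A)$ through $(s_0,s_1)$. This exhibits $g$ as a morphism $[g]\colon\Gamma A\to(Y,S)$, and since composition in $\ca{C}_{\rm{ex}}$ is computed on underlying morphisms we get $[1_Y]\circ[g]=[g]$ back in $\ca{C}_{\rm{ex}}(\Gamma A,(Y,S'))$. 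Thus $[g]$ is literally its own lift, and $\ca{C}_{\rm{ex}}(\Gamma A,[1_Y])$ is onto.

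I expect the only delicate point to be the first reduction, namely that an arbitrary $\nc{so}$-morphism may be replaced up to isomorphism by one with underlying identity; but this structural fact about effective epis is already supplied by the proof of \ref{(so,ff) factorizations in C_ex}. Once it is invoked, the genuinely new ingredient—that $I_A$ is initial among pseudocongruences on $A$, so that order-reflexivity makes every representing morphism available simultaneously for $S$ and $S'$—renders the lift immediate, and no further computation is required.
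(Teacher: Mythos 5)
Your proof is correct and follows essentially the same route as the paper's: reduce (via the analysis in the proof of Proposition \ref{(so,ff) factorizations in C_ex}) to an $\nc{so}$-morphism whose underlying $\ca{C}$-morphism is an identity, then observe that order-reflexivity of the source pseudocongruence lets the representing morphism $g$ of any $[g]\colon\Gamma A\to(Y,S')$ serve as its own lift. Your extra framing about $I_A$ being ``minimal'' is only heuristic (the actual mechanism is $gi_0^A\leq gi_1^A$ plus order-reflexivity, exactly as in the paper), but the argument itself is complete.
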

\begin{proof}
	Recall that, up to isomorphism, every $\nc{so}$-morphism in $\ca{C}_{\rm{ex}}$ can be represented by an identity morphism of $\ca{C}$. So consider such a morphism $[1_X]\colon(X,R)\to(X,S)$ and an arbitrary morphism $[f]\colon\Gamma Z\to(X,S)$ in $\ca{C}_{\rm{ex}}$. Then $f$ also defines a morphism  $\Gamma Z\to (X,R)$, simply by order-reflexivity of $R$, yielding the desired factorization.
\end{proof}

In particular, we have that $\ca{C}$ is a projective cover of $\ca{C}_{\rm{ex}}$.

\section{The Universal Property and Characterization of the Completion}

In the case of the exact completion $\ca{C}_{\rm{ex}}$ of an ordinary category $\ca{C}$, a fact which at first appeared curious is that its universal property does not involve the type of functors that one might expect.

To state a prospective universal property, one has in any case to consider functors $\ca{C}\to\ca{E}$ from the merely weakly lex category $\ca{C}$ into exact categories $\ca{E}$. It is natural to initially expect that the appropriate functors should be the weakly lex ones, i.e. the ones preserving weak finite limits, and that any such functor will uniquely extend to an exact functor $\ca{C}_{\rm{ex}}\to\ca{E}$. In other words, this is to say that the construction of the exact completion should be the left biadjoint to the obvious forgetful 2-functor $\nc{EX}\to\nc{WLEX}$. However, as observed in \cite{Carboni-Vitale}, this is simply not the case. Furthermore, the authors show that this issue cannot be avoided by choosing a different class of functors. It is simply impossible to turn the construction $\ca{C}_{\rm{ex}}$ into a left biadjoint.

Instead, the universal property of $\ca{C}_{\rm{ex}}$ is of a different flavor. The identification of the functors involved is a fundamental contribution of Enrico Vitale. He has termed them \emph{left covering} and they form a central part (and provide the title) of his 1994 PhD thesis \cite{VitaleThesis}. It is this notion which we now simply adapt to the $\nc{Pos}$-enriched context.

\begin{defi}\label{defi: left covering}
	Consider a functor $F\colon\ca{C}\to\ca{E}$ where $\ca{C}$ is weakly lex and $\ca{E}$ is regular. We say that $F$ is \emph{left covering} if, given any finite weight $W\colon\ca{I}\to\nc{Pos}$ and diagram $D\colon\ca{I}\to\ca{C}$, for every weak limit $\{W,D\}_{w}$ in $\ca{C}$ the canonical factorization $F\{W,D\}_{w}\to\{W,FD\}$ is an effective epimorphism in $\ca{E}$.
\end{defi}

We will not go into an extensive analysis of the properties that such functors enjoy, but rather restrict ourselves to those that are strictly necessary for proving the universal property of the exact completion. Perhaps the most fundamental is the one contained in the following, which should be compared to Theorem 26 of \cite{Carboni-Vitale}.

\begin{thm}\label{left covering functors and congruences}
	Let $F\colon\ca{C}\to\ca{E}$ be a left covering functor, where $\ca{C}$ is weakly lex and $\ca{E}$ is regular. If $\begin{tikzcd}[cramped] R\ar[r,shift left=0.75ex,"r_0"]\ar[r,shift right=0.75ex,"r_1"'] & X\end{tikzcd}$ is a pseudocongruence in $\ca{C}$ and $\begin{tikzcd}[cramped]FR\ar[r,two heads,"p"] & S\ar[r,tail,"{\langle s_0,s_1\rangle}"] & FX \times FX\end{tikzcd}$ is the $(\nc{so},\nc{ff})$ factorization of $\langle Fr_0,Fr_1\rangle$ in $\ca{E}$, then $\begin{tikzcd}[cramped] S\ar[r,shift left=0.75ex,"s_0"]\ar[r,shift right=0.75ex,"s_1"'] & FX\end{tikzcd}$ is a congruence in $\ca{E}$.
\end{thm}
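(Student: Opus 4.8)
The plan is to verify that $S$, equipped with the jointly order-monic pair $(s_0,s_1)$ coming from the $(\nc{so},\nc{ff})$ factorization, satisfies the two properties that by the earlier remark characterize a congruence: it is \emph{order-reflexive} and \emph{transitive}. That $S$ is a genuine relation is automatic, since $\langle s_0,s_1\rangle$ is an order-mono by construction. In both remaining cases the mechanism is the same: I would produce a suitable $\nc{so}$-morphism out of $F$ applied to a weak limit in $\ca{C}$ --- which is exactly what the left covering hypothesis delivers --- and then descend a map along it against the order-mono $\langle s_0,s_1\rangle$ using the orthogonality $\nc{so}\perp\nc{ff}$ that holds in the regular category $\ca{E}$. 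Throughout I would use that $s_0 p=Fr_0$ and $s_1 p=Fr_1$.

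For order-reflexivity, first I would invoke order-reflexivity of the pseudocongruence $R$: applied to the legs $i_0^X\le i_1^X$ of a weak comma object $I_X=1_X/1_X$, it yields $\rho\colon I_X\to R$ with $r_0\rho=i_0^X$ and $r_1\rho=i_1^X$. Applying $F$, local monotonicity makes $(Fi_0^X,Fi_1^X)$ an ordered pair into $FX$, hence there is a canonical comparison $\kappa\colon FI_X\to I_{FX}$ into the \emph{strong} comma object $I_{FX}=1_{FX}/1_{FX}$, and left covering (for the finite comma weight) guarantees that $\kappa$ is an $\nc{so}$-morphism. Now $pF\rho\colon FI_X\to S$ and the legs $\langle\iota_0,\iota_1\rangle\colon I_{FX}\to FX\times FX$ fit into a square over $\langle s_0,s_1\rangle$ and $\kappa$ whose two composites both equal $\langle Fi_0^X,Fi_1^X\rangle$, so orthogonality produces a filler $I_{FX}\to S$ with legs $\iota_0,\iota_1$. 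This exhibits $I_{FX}\le S$ as subobjects of $FX\times FX$, which is exactly order-reflexivity.

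Transitivity is the step I expect to be the main obstacle, as it requires combining left covering with the stability properties of $\nc{so}$-morphisms. Forming the strong pullback $S*S$ (with projections $\partial_0,\partial_1$) in $\ca{E}$ and a weak pullback $R*R$ in $\ca{C}$, I would first note that left covering applied to the conical finite weak pullback $R*R$ makes the comparison $\theta\colon F(R*R)\to FR\times_{FX}FR$ an $\nc{so}$-morphism, while the induced map $p\times_{FX}p\colon FR\times_{FX}FR\to S*S$ is an $\nc{so}$-morphism because in a regular category a fibered product of $\nc{so}$-morphisms is again $\nc{so}$ (by pullback-stability together with closure under composition). Hence the composite $F(R*R)\to S*S$ is $\nc{so}$. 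I would then push the transitivity witness $\tau^R\colon R*R\to R$ forward to $pF\tau^R\colon F(R*R)\to S$ and descend it along this $\nc{so}$-morphism. The commutativity required for the orthogonality square is where the transitivity equations $r_0\tau^R=r_0 d_0^R$ and $r_1\tau^R=r_1 d_1^R$ enter: both legs of the relevant square reduce to $\langle F(r_0 d_0^R),F(r_1 d_1^R)\rangle$, so $\nc{so}\perp\nc{ff}$ yields a unique $\tau^S\colon S*S\to S$ with $s_0\tau^S=s_0\partial_0$ and $s_1\tau^S=s_1\partial_1$, which is precisely transitivity of $S$. With order-reflexivity and transitivity established, the cited remark gives that $S$ is a congruence in $\ca{E}$.
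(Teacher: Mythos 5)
Your proposal is correct and follows essentially the same route as the paper: order-reflexivity via the left-covering comparison $FI_X\twoheadrightarrow I_{FX}$ and a diagonal fill-in against the order-mono $\langle s_0,s_1\rangle$, and transitivity by composing the comparison $F(R*R)\twoheadrightarrow FR\times_{FX}FR$ with $p\times_{FX}p$ (the latter shown to be an $\nc{so}$-morphism by pullback-stability and composition, exactly what the paper's grid of pullback squares encodes) and then filling against $\langle s_0,s_1\rangle$ using $\tau^R$. The reduction to order-reflexivity plus transitivity via the remark in the preliminaries is also how the paper proceeds.
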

\begin{proof}
	Consider the following weak comma in $\ca{C}$ and comma in $\ca{E}$ respectively.
	\begin{center}
		\begin{tikzcd}
			I_{X}\ar[d,"c_0"']\ar[r,"c_1"]\ar[dr,phantom,"\leq"] & X\ar[d,equal] \\
			X\ar[r,equal] & X
		\end{tikzcd}
		\quad 
		\begin{tikzcd}
			I_{FX}\ar[d,"i_0"']\ar[r,"i_1"]\ar[dr,phantom,"\leq"] & FX\ar[d,equal] \\
			FX\ar[r,equal] & FX
		\end{tikzcd}
	\end{center}
	By order-reflexivity of $R$, there is a $u\colon I_X\colon\to R\in\ca{C}$ such that $r_{0}u=c_0$ and $r_{1}u=c_1$. Since $F$ is left covering, the comparison morphism $q\colon FX\twoheadrightarrow I_{FX}$ in an $\nc{so}$-morphism in $\ca{E}$. We then have the following commutative diagram:
	\begin{center}
		\begin{tikzcd}
			FI_{X}\ar[r,two heads,"q"]\ar[d,"Fu"'] & I_{FX}\ar[r,tail,"{\langle i_0,i_1\rangle}"]\ar[d,dashed] & FX\times FX\ar[d,equal] \\
			FR\ar[r,two heads,"p"'] & S\ar[r,tail,"{\langle s_0,s_1\rangle}"'] & FX\times FX
		\end{tikzcd}
	\end{center}
	The arrow $I_{FX}\to S$ is induced by functoriality of $(\nc{so},\nc{ff})$ factorizations and exhibits order-reflexivity of $S$.
	
	For transitivity, we observe first that the induced arrow $\epsilon\colon FR*FR\twoheadrightarrow S*S$ is an $\nc{so}$-morphism. This can be seen by forming the following diagram of pullback squares and using the stability of $\nc{so}$-morphisms under pullback in the regular category $\ca{E}$:
	\begin{center}
		\begin{tikzcd}
			FR*FR\ar[r,two heads]\ar[d,two heads] & P\ar[r]\ar[d,two heads] & FR\ar[d,two heads,"p"] \\
			P'\ar[r,two heads]\ar[d] & S*S\ar[r]\ar[d] & S\ar[d,tail,"{\langle s_0,s_1\rangle}"] \\
			FR\ar[r,two heads,"p"'] & S\ar[r,tail,"{\langle s_0,s_1\rangle}"'] & FX\times FX
		\end{tikzcd}
	\end{center}
	In addition, the canonical $q\colon F(R*R)\twoheadrightarrow FR*FR$ is an $\nc{so}$-morphism because $F$ is left covering. Using the transitivity of $R$ we finally have the following commutative diagram:
	\begin{center}
		\begin{tikzcd}
			F(R*R)\ar[r,two heads,"\epsilon q"]\ar[d,"p\circ F\tau^{R}"'] & S*S\ar[d,"{\langle s_{0}d_{0}^{S},s_{1}d_{1}^{S}\rangle}"]\ar[dl,dashed,"\tau^{S}"'] \\
			S\ar[r,tail,"{\langle s_0,s_1\rangle}"'] & FX\times FX
		\end{tikzcd}
	\end{center}
	
	The arrow $\epsilon q$ is an $\nc{so}$-morphism because both $\epsilon,q$ are such. Since $\langle s_0,s_1\rangle$ is order-monic, we get a $\tau^{S}\colon S*S\to S$ making both triangles commute. This gives transitivity of $S$.
\end{proof}

It will also be important to have a simpler way of checking that a functor is left covering. The next proposition reduces this check to two specific types of finite limits. However, we begin with a lemma which will be used multiple times here and later on.

\begin{lem}\label{useful lemma}
	Suppose that $\ca{E}$ is a regular category.
\begin{enumerate}
	\item Consider the following commutative diagram in $\ca{E}$, where the rows are inserters, $m$ is an order-mono and $p$ is an effective epi. 
	\begin{center}
		\begin{tikzcd}
			I\ar[d,dashed,"q"']\ar[r,tail,"i"] & X\ar[d,two heads,"p"']\ar[r,shift left=0.75ex,"f_0"]\ar[r,shift right=0.75ex,"f_1"'] & Y\ar[d,tail,"m"] \\
			I'\ar[r,tail,"{i'}"] & X'\ar[r,shift left=0.75ex,"{f'_0}"]\ar[r,shift right=0.75ex,"{f'_1}"'] & Y'
		\end{tikzcd}
	\end{center}
	Then the induced morphism $q\colon I\to I'$ is also an effective epi.
	\item Consider the following finite family of commutative squares in $\ca{E}$
	\begin{displaymath}
		\begin{tikzcd}
			A_k\ar[r,"a_k"]\ar[d,two heads,"f_k"'] & A\ar[d,tail,"f"] \\
			B_k\ar[r,"b_k"'] & B
		\end{tikzcd}
	\end{displaymath}
	where the $f_k$ are effective epis and $f$ is an order-mono. Then the induced morphism from the $n$-ary pullback of the $a_k$ to that of the $b_k$ is an effective epi.
\end{enumerate}
\end{lem}
\begin{proof}
(1)	We will show that the left-hand square is a pullback. So let $g\colon Z\to I$ and $h\colon Z\to X$ be such that $i'g=ph$. Then
	$$mf_{0}h=f'_{0}ph=f'_{0}i'g\leq f'_{1}i'g=f'_{1}ph=mf_{1}h$$
	and because $m$ is an order-mono we obtain $f_{0}h\leq f_{1}h$. Thus, there is a unique $u\colon Z\to I$ such that $iu=h$. Then we also have that $qu=g$, since $i'qu=piu=ph=i'g$ and $i'$ is order-monic. Finally, the pair $(q,i)$ is jointly order-monic because $i$ on its own is already so.  
	
	We also observe here that the same (almost) exact proof works if the two rows are equalizers instead of inserters. This then generalizes straightforwardly to $n$-ary equalizers for any $n\geq 2$. We shall need this form of the result below.
	
(2) Form the following commutative diagram, where the rows are $n$-ary equalizers.
\begin{displaymath}
	\begin{tikzcd}[sep=3.5em]
		P\ar[r,tail]\ar[d,"r"'] & A_1\times...\times A_n\ar[r,shift left=1.5ex,"a_1\pi_1"]\ar[r,shift right=1.5ex,"a_n\pi_n"',"\vdots"]\ar[d,"f_1\times...\times f_n"'] & A\ar[d,tail,"f"] \\
		Q\ar[r,tail] & B_1\times...\times B_n\ar[r,shift left=1.5ex,"b_1\pi_1"]\ar[r,shift right=1.5ex,"\vdots","b_n\pi_n"'] & B
	\end{tikzcd}
\end{displaymath}
Note that $f_1\times...\times f_n$ is an effective epi, because each $f_k$ is such and we are in a regular category. Then by (the $n$-ary equalizer version of) part (1) we have that $r\colon P\to Q$ is an effective epi. But the latter is precisely the comparison morphism between the $n$-ary pullbacks.
\end{proof}

\begin{prop}\label{left covering wrt prod and ins}
	Let $F\colon\ca{C}\to\ca{E}$ be functor from the weakly lex category $\ca{C}$ to the regular category $\ca{E}$. If $F$ is left covering with respect to terminal object, finite products and inserters, then it is left covering.
\end{prop}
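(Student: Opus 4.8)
The plan is to reduce the statement to the two limit types in the hypothesis by an induction along a construction of the weak limit out of weak products and weak inserters. Given a finite weight $W\colon\ca I\to\nc{Pos}$ and a diagram $D\colon\ca I\to\ca C$, I would invoke (the proof of) \ref{weakly lex from weak prod and weak ins} to build a chosen weak limit $\{W,D\}_w$ by a finite sequence of steps, each of which is either a weak finite product or a weak inserter. The point that already needs care here is that one can, and should, arrange this construction so that every inserter step $\mathrm{wIns}(\phi,\psi\colon B\to Q)$ is taken with codomain $Q$ a single object $D_j$ of the diagram: any inserter whose codomain is a product is the intersection (an iterated inserter) of the inserters of its components, and any inserter into a power of a diagram object unfolds, via the finitely many evaluation maps, into an iterated inserter with codomain $D_j$. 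Performing the parallel construction in the regular category $\ca E$ out of the images $FD_i$ (now with strong products and strong inserters) produces the object $\{W,FD\}$, and by naturality the comparison $F\{W,D\}_w\to\{W,FD\}$ is the composite of the comparison maps attached to the individual steps.

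I would then prove, by induction on this construction, that the comparison $c_L\colon FL\to\bar L$ attached to every intermediate object $L$ is an $\nc{so}$-morphism in $\ca E$. For a diagram object the comparison is an identity. At a product step $L=\prod_k A_k$ the comparison factors as the product comparison $F(\prod_k A_k)\to\prod_k FA_k$, which is $\nc{so}$ by the hypothesis on products, followed by $\prod_k c_{A_k}$; the latter is a finite product of the inductively-$\nc{so}$ maps $c_{A_k}$, hence $\nc{so}$ because in a regular category a finite product of $\nc{so}$-morphisms is $\nc{so}$ (each factor $c_{A_k}\times\id$ being a pullback of $c_{A_k}$ along a projection). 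At an inserter step $L=\mathrm{wIns}(\phi,\psi\colon B\to D_j)$ the comparison factors as the inserter comparison $FL\to\mathrm{Ins}(F\phi,F\psi)$, which is $\nc{so}$ by the hypothesis on inserters, followed by the map $\gamma$ into $\bar L=\mathrm{Ins}(\bar\phi,\bar\psi)$ induced by $c_B$. Since the codomain is the diagram object $D_j$, its comparison is the identity, whence $\bar\phi c_B=F\phi$ and $\bar\psi c_B=F\psi$; I would use this to identify $\mathrm{Ins}(F\phi,F\psi)$ with the pullback of the order-mono inserter leg $\bar e\colon\bar L\rightarrowtail\bar B$ along $c_B$, under which $\gamma$ becomes the pullback of the inductively-$\nc{so}$ morphism $c_B$. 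It is therefore $\nc{so}$ by stability of $\nc{so}$-morphisms under pullback in the regular category $\ca E$. As $\nc{so}$-morphisms are closed under composition, $c_L$ is $\nc{so}$.

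Applying the inductive claim to the top of the construction shows that the comparison for the chosen $\{W,D\}_w$ is $\nc{so}$. Since \ref{defi: left covering} requires this for \emph{every} weak limit and any two weak limits of the same weight factor through one another, I would close with the cancellation property that if $gf$ is $\nc{so}$ then $g$ is $\nc{so}$ (valid because order-monos are monic, so the left class of the orthogonality is right-cancellable): writing $c_{L'}=c_L\circ F(s')$ for a factorization $s'\colon L\to L'$ through any other weak limit $L'$ shows that every comparison is $\nc{so}$. Hence $F$ is left covering.

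I expect the inserter step, and specifically the verification that $\gamma$ is $\nc{so}$, to be the main obstacle. The naive factorization of $\gamma$ runs through the comparison $c_Q$ of the inserter's codomain, and for a general codomain $c_Q$ is only $\nc{so}$, not order-reflecting, so that $\mathrm{Ins}(F\phi,F\psi)$ is merely a \emph{subobject} of the pullback $c_B^{*}\bar L$ and $\gamma$ need not be $\nc{so}$. The whole weight of the proof therefore rests on the bookkeeping of the first paragraph that keeps every inserter codomain a single diagram object, forcing $c_Q$ to be an identity and collapsing that subobject inclusion into an equality; carrying out this reduction cleanly, together with checking the naturality that makes the total comparison the asserted composite of step comparisons, is where the real work lies.
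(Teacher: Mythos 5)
The paper itself offers no proof of \ref{left covering wrt prod and ins} (nor of \ref{weakly lex from weak prod and weak ins}); both are stated as routine adaptations of the corresponding ordinary results of Carboni--Vitale. So there is nothing in the text to compare against, and your proposal has to stand on its own. It does: the strategy is the expected one, and the details you supply are sound. The decomposition into one product stage plus inserter stages whose codomains are single diagram objects $D_j$ is achievable exactly as you say (cotensor conditions become inserters of pairs of evaluation maps into $D_i$, naturality equations become pairs of inserters into $D_j$), and it is precisely what makes the inserter step work: with $c_{D_j}=\id$ one has $\bar\phi c_B=F\phi$ and $\bar\psi c_B=F\psi$, the square relating $\mathrm{Ins}(F\phi,F\psi)$ to $\mathrm{Ins}(\bar\phi,\bar\psi)$ is then a genuine pullback (the order-monic inserter leg gives both the uniqueness and the two-dimensional part), and $\gamma$ is $\nc{so}$ by pullback-stability in the regular category $\ca{E}$. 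Your own diagnosis of why the naive argument fails for a general codomain (the comparison $c_Q$ is only $\nc{so}$, not order-reflecting, so one gets a subobject of the pullback rather than the pullback) is exactly right, and the product step and the closure properties of $\nc{so}$-morphisms you use (finite products of $\nc{so}$'s via pullbacks along projections, closure under composition) are all valid here.

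One slip to correct in the final step: with a factorization $s'\colon L\to L'$ through another weak limit $L'$, the composite $c_L\circ F(s')$ is not even well-typed; the correct identity, which follows from uniqueness of factorizations through the strong limit $\{W,FD\}$, is $c_L=c_{L'}\circ F(s')$. This is what your right-cancellation lemma (if $gf$ is $\nc{so}$ then $g$ is $\nc{so}$, applied with $g=c_{L'}$, $f=F(s')$) actually needs, and that lemma does hold in the enriched sense: surjectivity of the induced map into the pullback of hom-posets uses monicity of order-monos, and the order-reflection part is inherited from the corresponding property for $gf$. With that equation turned around, the argument is complete and would serve as a correct proof of the proposition.
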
 
\begin{proof}
This proof will proceed in a sequence of steps, each verifying that $F$ is left covering with respect to a particular type of limit, before the final step dealing with the general case. 

\underline{$F$ is left covering with respect to $n$-ary products:} It suffices to deal with the case $n=3$. So consider objects $A,B,C\in\ca{C}$ and take successive weak binary products $\begin{tikzcd}
	A & P\ar[l,"\pi_A"']\ar[r,"\pi_B"] & B
\end{tikzcd}$ and
$\begin{tikzcd}
	P& Q\ar[l,"\pi_P"']\ar[r,"\pi_C"] & C
\end{tikzcd}$ in $\ca{C}$. Then $Q$ is a weak ternary product and the comparison morphism to the product in $\ca{E}$ can be written as the composition $\begin{tikzcd}
FQ\ar[r,two heads,"q"] & FP\times FC\ar[r,two heads,"p\times 1"] & (FA\times FB)\times FC\cong FA\times FB\times FC
\end{tikzcd}$, where $p,q$ are the corresponding comparisons for the binary products. Since we are in a regular category, $p$ being an effective epi implies the same property for $p\times 1$ and then for the composition $(p\times 1)q$.

The general case can similarly be obtained by induction, constructing for each $n$ the $(n+1)$-ary product via the $n$-ary one and binary products.

\underline{$F$ is left covering with respect to equalizers:} Consider a pair of morphisms $\begin{tikzcd}
	X\ar[r,shift left=1ex,"f"]\ar[r,shift right=1ex,"g"'] & Y
\end{tikzcd}$ in $\ca{C}$. Form weak inserters 
$\begin{tikzcd}
E\ar[r,"e"] & X\ar[r,shift left=1ex,"f"]\ar[r,shift right=1ex,"g"',"\vleq"] & Y
\end{tikzcd}$
and
$\begin{tikzcd}
	E'\ar[r,"{e'}"] & E\ar[r,shift left=1ex,"ge"]\ar[r,shift right=1ex,"fe"',"\vleq"] & Y
\end{tikzcd}$, so that then 
$\begin{tikzcd}
E'\ar[r,"{ee'}"] & X\ar[r,shift left=1ex,"f"]\ar[r,shift right=1ex,"g"'] & Y
\end{tikzcd}$
is a weak equalizer.  Similarly, we form strong inserters 
$\begin{tikzcd}
	I\ar[r,tail,"i"] & FX\ar[r,shift left=1ex,"Ff"]\ar[r,shift right=1ex,"Fg"',"\vleq"] & FY
\end{tikzcd}$
and
$\begin{tikzcd}
	I'\ar[r,tail,"{i'}"] & I\ar[r,shift left=1ex,"Fg\circ i"]\ar[r,shift right=1ex,"Ff\circ i"',"\vleq"] & FY
\end{tikzcd}$,
so that 
$\begin{tikzcd}
	I'\ar[r,tail,"{ii'}"] & FX\ar[r,shift left=1ex,"Ff"]\ar[r,shift right=1ex,"Fg"'] & FY
\end{tikzcd}$
is an equalizer in $\ca{E}$. We also form the inserter
$\begin{tikzcd}
	I''\ar[r,tail,"{i''}"] & FE\ar[r,shift left=1ex,"F(ge)"]\ar[r,shift right=1ex,"F(fe)"',"\vleq"] & FY
\end{tikzcd}$.

Observe that, since $F$ is left covering with respect to inserters, the morphism $q\colon FE'\to I''$ such that $i''q=Fe'$ is an effective epi. Now we have the following commutative diagram
\begin{displaymath}
\begin{tikzcd}
	I''\ar[r,tail,"{i''}"]\ar[d,"r"'] & FE\ar[d,two heads,"p"']\ar[r,shift left=1ex,"F(ge)"]\ar[r,shift right=1ex,"F(fe)"',"\vleq"] & FY\ar[d,equal] \\
	I'\ar[r,tail,"{i'}"] & I\ar[r,shift left=1ex,"Fg\circ i"]\ar[r,shift right=1ex,"Ff\circ i"',"\vleq"] & FY
\end{tikzcd}
\end{displaymath}
where $p$ is the comparison morphism with $ip=Fe$, which is thus an effective epi because $F$ is left covering with respect to inserters. Then by \ref{useful lemma} we deduce that $r$ is also an effective epi. Finally, $rq\colon FE'\to I'$ is now an effective epi and it is indeed the comparison morphism to the equalizer $I'$, since $ii'q=ipi''q=FeFe'=F(ee')$.

\underline{$F$ is left covering with respect to $n$-ary equalizers:} As with products, it suffices to deal with the ternary case, as the general one then follows in the same way by induction. So consider a triple of arrows 
$\begin{tikzcd}
	X\ar[r,shift left=1.5ex,"f"]\ar[r,shift right=1.5ex,"h"']\ar[r,"g"] & Y
\end{tikzcd}$ in $\ca{C}$.

We form weak equalizers
$\begin{tikzcd}
	E\ar[r,"e"] & X\ar[r,shift left=1ex,"f"]\ar[r,shift right=1ex,"g"'] & Y
\end{tikzcd}$
and
$\begin{tikzcd}
	E'\ar[r,"{e'}"] & E\ar[r,shift left=1ex,"ge"]\ar[r,shift right=1ex,"he"'] & Y
\end{tikzcd}$
in $\ca{C}$, so that the diagram
$\begin{tikzcd}
	E'\ar[r,"{ee'}"] & 	X\ar[r,shift left=1.5ex,"f"]\ar[r,shift right=1.5ex,"h"']\ar[r,"g"] & Y
\end{tikzcd}$
is a weak ternary equalizer. Then in $\ca{E}$ we form the equalizers
$\begin{tikzcd}
	I\ar[r,tail,"i"] & FX\ar[r,shift left=1ex,"Ff"]\ar[r,shift right=1ex,"Fg"'] & FY
\end{tikzcd}$
and
$\begin{tikzcd}
	I'\ar[r,tail,"{i'}"] & I\ar[r,shift left=1ex,"Fg\circ i"]\ar[r,shift right=1ex,"Fh\circ i"'] & FY
\end{tikzcd}$,
so that
$\begin{tikzcd}
	I'\ar[r,tail,"{ii'}"] & FX\ar[r,shift left=1.5ex,"Ff"]\ar[r,shift right=1.5ex,"Fh"']\ar[r,"Fg"] & FY
\end{tikzcd}$
is a ternary equalizer. Also form the equalizer
$\begin{tikzcd}
	I''\ar[r,tail,"{i''}"] & FE\ar[r,shift left=1ex,"F(ge)"]\ar[r,shift right=1ex,"F(he)"'] & FY
\end{tikzcd}$.

By the previous step we know that $F$ is left covering with respect to (binary) equalizers, hence the comparison maps $q\colon FE\to I$, such that $iq=FE$, and $q''\colon FE'\to I''$, such that $i''q''=Fe'$, are both effective epis. Now we have the following commutative diagram
\begin{displaymath}
\begin{tikzcd}
	I''\ar[r,tail,"{i''}"]\ar[d,"r"'] & FE\ar[d,two heads,"q"']\ar[r,shift left=1ex,"F(ge)"]\ar[r,shift right=1ex,"F(he)"'] & FY\ar[d,equal] \\
	I'\ar[r,tail,"{i'}"] & I\ar[r,shift left=1ex,"Fg\circ i"]\ar[r,shift right=1ex,"Fh\circ i"'] & FY
\end{tikzcd}
\end{displaymath}
By the equalizer version of \ref{useful lemma} it follows that $r$ is also an effective epi and hence so is $rq''\colon FE'\twoheadrightarrow I'$. But the latter is precisely the comparison map to the ternary equalizer.

\underline{$F$ is left covering with respect to $n$-ary pullbacks:} Consider a finite family of arrows $f_k\colon X_k\to Y$, $k=1,...,n$, in $\ca{C}$. Take a weak $n$-ary product diagram $\pi_k\colon P\to X_k$ and then a weak $n$-ary equalizer
\begin{tikzcd}
	E\ar[r,"e"] & P\ar[r,shift left=1.5ex,"f_1\pi_1"]\ar[r,shift right=1.5ex,"f_n\pi_n"',"\vdots"] & Y
\end{tikzcd},
so that
\begin{displaymath}
\begin{tikzcd}
	& & E\ar[dll,"\pi_1e"']\ar[dl,"\pi_2e"]\ar[drr,"\pi_ne"] & & \\
	X_1\ar[drr,"f_1"'] & X_2\ar[dr,"f_2"] & & \hdots & X_n\ar[dll,"f_n"] \\
	& & Y & &
\end{tikzcd}
\end{displaymath}
is a weak $n$-ary pullback. At the same time, consider in $\ca{E}$ the $n$-ary equalizer
\begin{tikzcd}
	I\ar[r,tail,"i"] & \prod\limits_{k=1}^{n}FX_k\ar[r,shift left=1.5ex,"Ff_1\pi_{FX_1}"]\ar[r,shift right=1.5ex,"Ff_n\pi_{FX_n}"',"\vdots"] & FY
\end{tikzcd},
so that the following is an $n$-ary pullback
\begin{displaymath}
\begin{tikzcd}
& & I\ar[dll,"\pi_{FX_1}i"']\ar[dl,"\pi_{FX_2}i"]\ar[drr,"\pi_{FX_n}i"] & & \\
FX_1\ar[drr,"Ff_1"'] & FX_2\ar[dr,"Ff_2"] & & \hdots & FX_n\ar[dll,"Ff_n"] \\
& & FY & &
\end{tikzcd}
\end{displaymath}
We also form the $n$-ary equalizer
\begin{tikzcd}
	I'\ar[r,tail,"{i'}"] & FP\ar[r,shift left=1.5ex,"F(f_1\pi_1)"]\ar[r,shift right=1.5ex,"F(f_n\pi_n)"',"\vdots"] & FY
\end{tikzcd}
By the previous steps in the proof we know that the unique $q\colon FE\to I'$ such that $i'q=Fe$ and the comparison $p\colon FP\to\prod\limits_{k=1}^{n}FX_k$ are both effective epis. Then we have the following commutative diagram
\begin{displaymath}
\begin{tikzcd}
	I'\ar[r,tail,"{i'}"]\ar[d,"r"] & FP\ar[r,shift left=1.5ex,"F(f_1\pi_1)"]\ar[r,shift right=1.5ex,"F(f_n\pi_n)"',"\vdots"]\ar[d,two heads,"p"'] & FY\ar[d,equal] \\
	I\ar[r,tail,"i"] & \prod\limits_{k=1}^{n}FX_k\ar[r,shift left=1.5ex,"Ff_1\pi_{FX_1}"]\ar[r,shift right=1.5ex,"Ff_n\pi_{FX_n}"',"\vdots"] & FY
\end{tikzcd}
\end{displaymath}
It follows once more by \ref{useful lemma} that $r\colon I\to I'$ is an effective epi. Hence, so is $rq\colon FE\to I$, which is precisely the comparison map to the $n$-ary pullback.

\underline{$F$ is left covering:} Since this proof is already lengthy, to avoid over-complicating the issue we will only present the proof for a particular type of non-conical finite weighted limit. This (along with the previous steps of the proof) should already be indicative of the general situation, but is also technically the only type for which we will need the result in what follows. See also the construction of weak finite limits by weak finite products and weak inserters given in \ref{weakly lex from weak prod and weak ins}.

We consider the weak \emph{joint kernel} in $\ca{C}$ of a family of arrows $f_k\colon X\to Y_k$, for $k=1,...,n$. This is by definition a pair $\begin{tikzcd}
	L\ar[r,shift left=0.75ex,"l_0"]\ar[r,shift right=0.75ex,"l_1"'] & X
\end{tikzcd}$ which is weakly universal with respect to the property that $f_kl_0\leq f_kl_1$ for all $k=1,...,n$. In other words, $L$ is a (weak) limit weighted by $W\colon\ca{I}\to\nc{Pos}$, where $\ca{I}$ is a collection of spans $\begin{tikzcd}
i_0\ar[dr] && i_1\ar[dl] \\
& j_k &
\end{tikzcd}$, one for each $k=1,...,n$, and $W$ acts as follows:
\begin{displaymath}
\begin{tikzcd}
	i_0\ar[dr] && i_1\ar[dl] \\
	& j_k &
\end{tikzcd}
\qquad
\mapsto
\qquad
\begin{tikzcd}
	\{0\}\ar[dr,hook] && \{1\}\ar[dl,hook'] \\
	&  \{0\leq 1\}  &
\end{tikzcd}
\end{displaymath}
We would like to show that $F$ is left covering with respect to this weighted limit.

We construct $L$ in steps. First, let $\begin{tikzcd}
	X & P\ar[l,"\pi_0"']\ar[r,"\pi_1"] & X
\end{tikzcd}$ be a weak binary product in $\ca{C}$. Then for each $k=1,...,n$ we take a weak inserter $\begin{tikzcd}
E_k\ar[r,"e_k"] & P\ar[r,shift left=1ex,"f_k\pi_0"]\ar[r,shift right=1ex,"f_k\pi_1"',"\vleq"] & Y_k
\end{tikzcd}$, and finally a weak $n$-ary pullback 
\begin{displaymath}
\begin{tikzcd}
	& & L\ar[dll,"m_1"']\ar[dl,"m_2"]\ar[drr,"m_n"] & & \\
	E_1\ar[drr,"e_1"'] & E_2\ar[dr,"e_2"] & & \hdots & E_n\ar[dll,"e_n"] \\
	& & P & &
\end{tikzcd}
\end{displaymath}
so that we can set $l_0=\pi_0e_km_k$ and $l_1=\pi_1e_km_k$.

At the same time, we perform the constructions of the corresponding strong limits in $\ca{E}$ as indicated below.
\begin{displaymath}
\begin{tikzcd}
	I_k\ar[r,tail,"i_k"] & FX\times FX\ar[r,shift left=1ex,"Ff_k\pi_0"]\ar[r,shift right=1ex,"Ff_k\pi_1"',"\vleq"] & FY_k
\end{tikzcd}
\qquad
\begin{tikzcd}
	& & \Lambda\ar[dll,"\lambda_1"']\ar[dl,"\lambda_2"]\ar[drr,"\lambda_n"] & & \\
	I_1\ar[drr,tail,"i_1"'] & I_2\ar[dr,tail,"i_2"] & & \hdots & I_n\ar[dll,tail,"i_n"] \\
	& & FX\times FX & &
\end{tikzcd}
\end{displaymath}
Define $i\colon\Lambda\to FX\times FX$ by $i\coloneqq i_k\lambda_k$ and note that $i$ is precisely the joint inserter of the finite family $\begin{tikzcd}
	FX\times FX\ar[r,shift left=1ex,"Ff_k\pi_0"]\ar[r,shift right=1ex,"Ff_k\pi_1"',"\vleq"] & FY_k
\end{tikzcd}$, $k=1,...,n$ of ordered pairs of morphisms. We also form the following $n$-ary pullback in $\ca{E}$.
\begin{displaymath}
\begin{tikzcd}
	& & \Lambda'\ar[dll,"{\lambda_{1}'}"']\ar[dl,"{\lambda_{2}'}"]\ar[drr,"{\lambda_{n}'}"] & & \\
	FE_1\ar[drr,"Fe_1"'] & FE_2\ar[dr,"Fe_2"] & & \hdots & FE_n\ar[dll,"Fe_n"] \\
	& & FP & &
\end{tikzcd}
\end{displaymath}
Then we already know that the unique $r\colon FE\to\Lambda'$ such that $\lambda_{k}'r=Fm_k$ for all $k=1,...,n$ is an effective epi. Now consider the following pullback in $\ca{E}$.
\begin{displaymath}
\begin{tikzcd}
	Q\ar[r,tail,"{i'}"]\ar[d,two heads,"{p'}"'] & FP\ar[d,two heads,"p"] \\
	\Lambda\ar[r,tail,"i"'] & FX\times FX
\end{tikzcd}
\end{displaymath}
Then $p'$ is also an effective epi. In addition, an easy diagram chase shows that $i'$ is the joint inserter of the family $\begin{tikzcd}
	FP\ar[r,shift left=1ex,"F(f_k\pi_0)"]\ar[r,shift right=1ex,"F(f_k\pi_1)"'] & FY_k
\end{tikzcd}$, $k=1,...,n$. But this means that $i'$ can equally be constructed first taking inserters $\begin{tikzcd}
I_{k}'\ar[r,tail,"{i_{k}'}"] & FP\ar[r,shift left=1ex,"F(f_k\pi_0)"]\ar[r,shift right=1ex,"F(f_k\pi_1)"',"\vleq"] & FY_k
\end{tikzcd}$ and then the $n$-ary pullback
\begin{displaymath}
\begin{tikzcd}
	& & Q\ar[dll,tail,"\mu_1"']\ar[dl,tail,"\mu_2"]\ar[drr,tail,"\mu_n"] & & \\
	I_{1}'\ar[drr,tail,"{i_{1}'}"'] & I_{2}'\ar[dr,tail,"{i_{2}'}"] & & \hdots & I_{n}'\ar[dll,tail,"{i_{n}'}"] \\
	& & FP & &
\end{tikzcd}
\end{displaymath}
so that $i'=i_{k}'\mu_k$ for all $k=1,...,n$.

Now we know that for all $k=1,...,n$ the unique $q_k\colon FE_k\to I_{k}'$ such that $i_{k}'q_k=Fe_k$ is an effective epi. Hence, for all $k$ we have a commutative square
\begin{displaymath}
\begin{tikzcd}
	FE_k\ar[r,"Fe_k"]\ar[d,two heads,"q_k"'] & FP\ar[d,equal] \\
	I_{k}'\ar[r,tail,"{i_{k}'}"'] & FP
\end{tikzcd}
\end{displaymath}
It follows from \ref{useful lemma} that the induced $q\colon\Lambda'\to Q$ with $\mu_kq=q_k\lambda_{k}'$ is an effective epi.

Finally, it can now be seen that the comparison morphism between the image of weak joint kernel and the strong joint kernel is the composition $\begin{tikzcd}
	FE\ar[r,two heads,"r"] & \Lambda'\ar[r,two heads,"q"] & Q\ar[r,two heads,"{p'}"] & \Lambda
\end{tikzcd}$
and hence is an effective epi.
\end{proof}

Finally, we will need to know that in the lex context left covering functors reduce to those preserving finite limits.

\begin{prop}
	Let $F\colon\ca{C}\to\ca{E}$ be functor from the lex category $\ca{C}$ to the regular category $\ca{E}$. If $F$ is left covering, then it is left exact.
\end{prop}
\begin{proof}
The main point here is to observe that a left covering functor always preserves jointly order-monomorphic finite families of arrows. To see this, consider arrows $f_k\colon X\to Y_k$ in $\ca{C}$, for $k=1,...,n$. We can then form the (weak) \emph{joint kernel} of this family.  
To say that the family $(f_k)_{k=1}^{n}$ is jointly order-monomorphic is equivalent to having $l_0\leq l_1$. If $\begin{tikzcd}
	C\ar[r,shift left=0.75ex,"c_0"]\ar[r,shift right=0.75ex,"c_1"'] & FX
\end{tikzcd}$ is the corresponding joint kernel of $(Ff_k)_{k=1}^{n}$ in $\ca{E}$, then we have that the comparison $q\colon FL\to C$ is an effective epimorphism, since $F$ is left covering with respect to joint kernels. Hence, $q$ is in particular an order-epimorphism. Thus, we now have that $l_0\leq l_1\implies Fl_0\leq Fl_1\implies c_0q\leq c_1q \implies c_0\leq c_1$, and so $(Ff_k)_{k=1}^{n}$ are also jointly order-monomorphic.

\end{proof}

We would like to show that every left covering functor $F\colon\ca{C}\to\ca{E}$ into an exact category $\ca{E}$ extends uniquely (up to isomorphism) to a regular functor $\overline{F}\colon\ca{C}_{\rm{ex}}\to\ca{E}$. The definition (and hence the essential uniqueness) of $\overline{F}$ is forced upon us by the desired properties of $\overline{F}$.

Indeed, from \ref{C generates C_ex via coinserters} we know that every $(X,R)\in\ca{C}_{\rm{ex}}$ occurs in a coinserter diagram of the form \begin{tikzcd}[cramped]
	\Gamma R\ar[r,shift left=0.75ex,"\Gamma r_0"]\ar[r,shift right=0.75ex,"\Gamma r_1"'] & \Gamma X\ar[r,two heads,"{[1_X]}"] & (X,R)\end{tikzcd}. 
Applying $\overline{F}$ to this diagram and using the isomorphism $\overline{F}\circ\Gamma\cong F$, we obtain the following diagram in $\ca{E}$
\begin{center}
	\begin{tikzcd}
		FR\ar[rr,shift left=0.75ex," Fr_0"]\ar[rr,shift right=0.75ex,"Fr_1"']\ar[dr,two heads,"p"'] & & FX\ar[r,two heads,"{\overline{F}[1_X]}"] & \overline{F}(X,R) \\
		& R'\ar[ur,shift left=0.75ex,"{r_{0}'}"]\ar[ur,shift right=0.75ex,"{r_{1}'}"'] & &
	\end{tikzcd}
\end{center}
where $(r_{0}',r_{1}')$ is the image of $(Fr_0,Fr_1)$ in $\ca{E}$, which is a congruence by \ref{left covering functors and congruences}. By regularity of $\overline{F}$ we have that $\overline{F}[1_X]$ is an $\nc{so}$-morphism and that $R'$ is its kernel congruence. In particular, $\overline{F}(X,R)$ must be defined as the coinserter of $(Fr_0,Fr_1)$. Let us denote this coinserter $FX\twoheadrightarrow\overline{F}(X,R)$ by $q_{(X,R)}$. 

Similarly, if $[f]\colon(X,R)\to(Y,S)$ is a morphism in $\ca{C}_{\rm{ex}}$, then $\overline{F}[f]$ must be defined as the unique morphism making the following diagram commute
\begin{center}
	\begin{tikzcd}
		FR\ar[r,shift left=0.75ex," Fr_0"]\ar[r,shift right=0.75ex,"Fr_1"']\ar[d,"F\bar{F}"'] & FX\ar[r,two heads,"q_{(X,R)}"]\ar[d,"Ff"] & \overline{F}(X,R)\ar[d,dashed,"{\overline{F}[f]}"] \\
		FS\ar[r,shift left=0.75ex," Fs_0"]\ar[r,shift right=0.75ex,"Fs_1"'] & FY\ar[r,two heads,"q_{(Y,S)}"'] & \overline{F}(Y,S)
	\end{tikzcd}
\end{center}
Note that $\overline{F}[f]$ exists because both rows are coinserters in $\ca{E}$.

This deals with the existence and essential uniqueness of the functor $\overline{F}\colon\ca{C}_{\rm_{ex}}\to\ca{E}$. The crux of the universal property, which is the proof that $\overline{F}$ so defined is indeed a regular functor, is officially contained in the following.

\begin{thm}
	For any left covering functor $F\colon\ca{C}\to\ca{E}$ into an exact category $\ca{E}$, there is a unique up to isomorphism regular functor $\overline{F}\colon\ca{C}_{\rm{ex}}\to\ca{E}$ such that $\overline{F}\circ\Gamma\cong F$.
\end{thm}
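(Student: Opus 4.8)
The definition of $\overline{F}$ on objects and morphisms was already forced upon us in the discussion preceding the statement, so the essential uniqueness is automatic: any regular $G$ with $G\circ\Gamma\cong F$ must carry the coinserter presentation of \ref{C generates C_ex via coinserters} to a coinserter of $(Fr_0,Fr_1)$, whence $G\cong\overline{F}$. The plan is therefore to check that the forced assignment is a well-defined regular functor and that $\overline{F}\circ\Gamma\cong F$. For well-definedness and local monotonicity, the key remark is that $q_{(Y,S)}$ is the coinserter of $(Fs_0,Fs_1)$, so if $f\preccurlyeq g$ via some $\Sigma\colon X\to S$ then $q_{(Y,S)}Ff=q_{(Y,S)}Fs_0\,F\Sigma\leq q_{(Y,S)}Fs_1\,F\Sigma=q_{(Y,S)}Fg$; this shows simultaneously that $\overline{F}[f]$ depends only on the class $[f]$ and that $\overline{F}$ is locally monotone. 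Functoriality then follows from the uniqueness clause in the coinserter property of the $q_{(X,R)}$. Finally, $\overline{F}\Gamma A=\overline{F}(A,I_A)$ is by definition the coinserter of $(Fi_0^A,Fi_1^A)$; since $F$ is left covering the comparison $FI_A\twoheadrightarrow I_{FA}$ is an $\nc{so}$-morphism, so $(Fi_0^A,Fi_1^A)$ factors through the legs of the comma $I_{FA}=1_{FA}/1_{FA}$ in $\ca{E}$. As $\nc{so}$-morphisms are order-epic, coinserting $(Fi_0^A,Fi_1^A)$ agrees with coinserting these legs, and the latter coinserter is $1_{FA}$ since $I_{FA}$ already witnesses the generic inequality. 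This produces the required natural isomorphism $\overline{F}\Gamma\cong F$.

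Next I would verify that $\overline{F}$ preserves $\nc{so}$-morphisms. By \ref{(so,ff) factorizations in C_ex} every $\nc{so}$-morphism of $\ca{C}_{\rm{ex}}$ is isomorphic to one of the form $[1_X]\colon(X,R)\to(X,S)$, and by construction $\overline{F}[1_X]$ is characterised by $\overline{F}[1_X]\circ q_{(X,R)}=q_{(X,S)}$. As $q_{(X,S)}$ is an $\nc{so}$-morphism and $\nc{so}$-morphisms admit right cancellation (if a composite $ba$ is $\nc{so}$ then so is $b$), we conclude that $\overline{F}[1_X]$ is an $\nc{so}$-morphism.

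The substance of the argument is the preservation of finite limits. Since over $\nc{Pos}$ finite limits are generated by the terminal object, finite products and inserters (the strong counterpart of \ref{weakly lex from weak prod and weak ins}, equifiers being trivial in the poset-enriched setting), it suffices to treat these three cases; the terminal object is immediate. For a product of $(X,R)$ and $(Y,S)$ formed as in \ref{products in C_ex} from a weak product $P$ and the weak limit $\Gamma$, I would first note that $\langle F\pi_X,F\pi_Y\rangle\colon FP\to FX\times FY$ is an $\nc{so}$-morphism, because $P$ is a weak product and $F$ is left covering, and that $q_{(X,R)}\times q_{(Y,S)}$ is an $\nc{so}$-morphism (products of $\nc{so}$-morphisms are $\nc{so}$ in a regular category). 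Their composite $FP\twoheadrightarrow\overline{F}(X,R)\times\overline{F}(Y,S)$ is therefore an $\nc{so}$-morphism, and by the defining equations of $\overline{F}$ on $[\pi_X]$ and $[\pi_Y]$ it equals $\langle\overline{F}[\pi_X],\overline{F}[\pi_Y]\rangle\circ q_{(P,\Gamma)}$. Right cancellation then forces the comparison $\langle\overline{F}[\pi_X],\overline{F}[\pi_Y]\rangle\colon\overline{F}(P,\Gamma)\to\overline{F}(X,R)\times\overline{F}(Y,S)$ to be an $\nc{so}$-morphism as well. Inserters are handled identically, starting from the weak-limit construction of \ref{inserters in C_ex}.

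To upgrade these comparison $\nc{so}$-morphisms to isomorphisms I would show that they are also order-monomorphisms, and this is the step I expect to be the main obstacle. Concretely, one must check that the kernel congruence of $q_{(P,\Gamma)}$, namely the image of $(F\gamma_0,F\gamma_1)$, coincides with the congruence on $FP$ obtained by pulling back the product congruence of $\overline{F}(X,R)\times\overline{F}(Y,S)$ along the cover $\langle F\pi_X,F\pi_Y\rangle$. This is where the explicit construction of $\Gamma$ in \ref{products in C_ex}, which encodes $R$ on the first factor and $S$ on the second, must be combined with the left covering hypothesis to guarantee that all relevant comparison maps are covers, and with the exactness of $\ca{E}$. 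I expect to invoke \ref{left covering functors and congruences} to recognise the images of $(F\gamma_0,F\gamma_1)$, $(Fr_0,Fr_1)$ and $(Fs_0,Fs_1)$ as genuine congruences, and then to use effectiveness in $\ca{E}$ to conclude that two covers inducing the same congruence present the same quotient, turning the $\nc{so}$-comparison into an iso. Assembling preservation of finite limits with preservation of $\nc{so}$-morphisms then shows that $\overline{F}$ is regular, completing the proof.
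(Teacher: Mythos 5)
Your skeleton matches the paper's up to the crucial step: the forced definition of $\overline{F}$ via the coinserter presentations of \ref{C generates C_ex via coinserters} (hence uniqueness), the isomorphism $\overline{F}\Gamma\cong F$, preservation of $\nc{so}$-morphisms by right cancellation, and the fact that the product comparison is an $\nc{so}$-morphism are all correct and agree with (or correctly fill in details glossed over by) the paper. Where you diverge is in how left exactness is concluded. The paper shows only that $\overline{F}$ is left covering with respect to finite products and inserters, and then appeals to general theory: \ref{left covering wrt prod and ins} upgrades this to left covering for all finite weights, and a left covering functor whose domain has genuine finite limits automatically preserves them (the enriched analogue of Carboni--Vitale's result, which the paper leaves implicit in its opening sentence ``it suffices to show...''). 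You instead propose to upgrade each $\nc{so}$-comparison to an isomorphism by hand, by matching kernel congruences and invoking effectiveness. For binary products this is workable: $q_{(P,\Gamma)}$ and $(q_{(X,R)}\times q_{(Y,S)})\circ\langle F\pi_X,F\pi_Y\rangle$ are both effective epimorphisms out of $FP$, and showing they have the same kernel congruence does follow from \ref{left covering functors and congruences}, left covering applied to the weak limit defining $\Gamma$, and exactness of $\ca{E}$; two effective epis with the same kernel congruence present the same quotient.

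The genuine gap is the sentence ``Inserters are handled identically, starting from the weak-limit construction of \ref{inserters in C_ex}.'' They are not, and this is precisely where the paper does its real work. For products, the target of the comparison is itself a quotient of $FX\times FY$ along an evident $\nc{so}$-morphism, so everything reduces to composing two covers. For inserters, the target $I\rightarrowtail\overline{F}(X,R)$ is a \emph{subobject} of a quotient, and one must relate three different objects: the inserter $I$ of $(\overline{F}[f],\overline{F}[g])$ computed after quotienting by $q_{(X,R)}$, the inserter $I'$ of $(q_{(Y,S)}Ff,q_{(Y,S)}Fg)$ computed on $FX$, and the limit $I''$ of the $F$-image of the weak-limit diagram defining $E$ --- only the last of these receives an $\nc{so}$-comparison from $FE$ directly from the left covering hypothesis. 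The paper bridges these in two nontrivial steps: exactness of $\ca{E}$ is used to exhibit $I'$, and then $I''$, as successive pullbacks along the image congruence $S'$ of $(Fs_0,Fs_1)$ (so that $I''\to I'$ is a pullback of a cover, hence a cover), and Lemma \ref{useful lemma} is needed to show that the induced map $I'\to I$ between inserters taken before and after the quotient $q_{(X,R)}$ is an effective epimorphism, by proving the relevant square is a pullback. Neither step is produced by your product argument or by its congruence-comparison refinement; without them, even the assertion that the inserter comparison is an $\nc{so}$-morphism is unjustified. To close the gap you would need either to prove an analogue of Lemma \ref{useful lemma} and redo the $I''\twoheadrightarrow I'\twoheadrightarrow I$ analysis, or to prove once and for all the general fact that a left covering functor out of a lex category into a regular category preserves finite limits, which is the route the paper's phrasing presupposes.
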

\begin{proof}
	To show that $\overline{F}$ is left exact, it suffices to show that it is left covering with respect to finite products and inserters.
	\vspace{0.3cm}
	
	\underline{Binary Products}: Let $(X,R)$ and $(Y,S)$ be objects of $\ca{C}_{\rm{ex}}$. Recall that the corresponding binary product $\begin{tikzcd}[cramped] (X,R) & (P,\Gamma)\ar[l,"{[\pi_X]}"']\ar[r,"{[\pi_Y]}"] & (Y,S)
	\end{tikzcd}$ is constructed by first taking a weak binary product
	$\begin{tikzcd}[cramped] X & P\ar[l,"\pi_X"']\ar[r,"\pi_Y"] & Y
	\end{tikzcd}$ in $\ca{C}$. Then the following diagram commutes, where $\phi$ and $\psi$ are the canonical comparison morphisms, as can be seen by post-composing with the two projections from the product.
	\begin{center}
		\begin{tikzcd}
			FP\ar[r,two heads,"\phi"]\ar[d,"q_{(P,\Gamma)}"'] & FX\times FY\ar[d,two heads,"q_{(X,R)}\times q_{(Y,S)}"] \\
			\overline{F}(P,\Gamma)\ar[r,"\psi"'] & \overline{F}(X,R)\times\overline{F}(Y,S)
		\end{tikzcd}
	\end{center} 
	
	Observe that $q_{(X,R)}\times q_{(Y,S)}$ is an $\nc{so}$-morphism because both $q_{(X,R)}$ and $q_{(Y,S)}$ are such and we are in a regular category, while $\phi$ is an $\nc{so}$-morphism because $F$ is left covering. Thus, their composition is also an $\nc{so}$-morphism and then by the commutativity above so is $\psi$.
	\vspace{0.3cm}
	
	\underline{Terminal Object}: The terminal object of $\ca{C}_{\rm{ex}}$ is $(T,T\times T)$, where $T$ is any weak terminal object of $\ca{C}$ and $T\times T$ is any weak binary product in $\ca{C}$. The unique arrow $FT\twoheadrightarrow 1$ in $\ca{E}$ is an $\nc{so}$-morphism, hence so is $\overline{F}(T,T\times T)\to 1$.
	\vspace{0.3cm}
	
	\underline{Inserters}: Consider an inserter $\begin{tikzcd}[cramped] (E,\tilde{R})\ar[r,tail,"{[e]}"] & (X,R)\ar[r,shift left=1ex,"{[f]}"]\ar[r,shift right=1ex,"{[g]}"']\ar[r,phantom,"\vleq"] & (Y,S)
	\end{tikzcd}$ in $\ca{C}_{\rm{ex}}$. Recall that this is constructed by first taking the weak conical limit
	\begin{center}
		\begin{tikzcd}
			& X\ar[dl,"f"']\ar[dr,"g"] & \\
			Y & E\ar[u,dashed,"e"']\ar[d,dashed,"\phi"] & Y \\
			& S\ar[ul,"s_0"]\ar[ur,"s_1"'] & 
		\end{tikzcd}
	\end{center} 
	in $\ca{C}$. We then also consider in $\ca{E}$ the following corresponding inserters
	\begin{center}
		\begin{tikzcd} I\ar[r,tail,"i"] & \overline{F}(X,R)\ar[r,shift left=1ex,"{\overline{F}[f]}"]\ar[r,shift right=1ex,"{\overline{F}[g]}"']\ar[r,phantom,"\vleq"] & \overline{F}(Y,S)
		\end{tikzcd}
	\end{center}
	\begin{center}
		\begin{tikzcd} I'\ar[r,tail,"{i'}"] & FX\ar[r,shift left=1ex,"q_{(X,R)}Ff"]\ar[r,shift right=1ex,"q_{(Y,S)}Fg"']\ar[r,phantom,"\vleq"] & \overline{F}(Y,S)
		\end{tikzcd}
	\end{center}
	We observe here that, by exactness of $\ca{E}$, the inserter $I'$ can also be constructed as the first of the following two successive pullbacks.
	\begin{center}
		\begin{tikzcd}
			I''\ar[r,two heads,"{p'}"]\ar[d,"\psi"'] & I'\ar[r,tail,"{i'}"]\ar[d,"{\psi'}"'] & FX\ar[d,"{\langle Ff,Fg\rangle}"] \\
			FS\ar[r,two heads,"p"'] & S'\ar[r,tail,"{\langle s_0,s_1\rangle}"'] & FY\times FY
		\end{tikzcd}
	\end{center}
	In addition, notice that $I''$ is now precisely the following limit in $\ca{E}$:
	\begin{center}
		\begin{tikzcd}
			& FX\ar[dl,"Ff"']\ar[dr,"Fg"] & \\
			FY & I''\ar[u,dashed,"{i'p'}"']\ar[d,dashed,"\psi"] & FY \\
			& FS\ar[ul,"Fs_0"]\ar[ur,"Fs_1"'] & 
		\end{tikzcd}
	\end{center}
	Thus, since $F$ is left covering, the morphism $\epsilon\colon FE\twoheadrightarrow I''$ with $i'p'\epsilon=Fe$ and $\psi\epsilon=F\phi$ is an $\nc{so}$-morphism in $\ca{E}$. Now we gather all this data in the following diagram:
	\begin{center}
		\begin{tikzcd}
			F\tilde{R}\ar[rr,shift left=0.75ex,"F\tilde{r}_0"]\ar[rr,shift left=0.75ex,"F\tilde{r}_1"']\ar[dd,"F\bar{e}"'] & & FE\ar[rr,two heads,"q_{(X,E)}"]\ar[dd,"Fe"']\ar[dr,two heads,"{p'\epsilon}"]  & & \overline{F}(E,\tilde{R})\ar[dd,near start,"{\overline{F}[e]}"]\ar[dr,"\bar{\epsilon}"] &  \\
			& & & I'\ar[dl,tail,"{i'}"]\ar[rr,dashed,near start,"r"] & & I\ar[dl,tail,"i"] \\
			FR\ar[rr,shift left=0.75ex,"Fr_0"]\ar[rr,shift right=0.75ex,"Fr_1"']\ar[d,shift left=0.75ex,"F\bar{f}"]\ar[d,shift right=0.75ex,"F\bar{g}"'] & & FX\ar[rr,two heads,"q_{(X,R)}"]\ar[d,shift left=0.75ex,"Ff"]\ar[d,shift right=0.75ex,"Fg"'] & & \overline{F}(X,R)\ar[d,shift left=0.75ex,"{\overline{F}[f]}"]\ar[d,shift right=0.75ex,"{\overline{F}[g]}"'] & \\
			FS\ar[rr,shift left=0.75ex,"Fs_0"]\ar[rr,shift right=0.75ex,"Fs_1"']\ar[dr,two heads,"p"'] & & FY\ar[rr,two heads,"q_{(Y,S)}"'] & & \overline{F}(Y,S) & \\
			& S'\ar[ur,shift left=0.75ex,"{s_{0}'}"]\ar[ur,shift right=0.75ex,"{s_{1}'}"'] & & & & 
		\end{tikzcd}
	\end{center}
	Observe that the top right parallelogram commutes because it does so after post-composing with the (order-)mono $i$. Hence, since $p'\epsilon$ is an $\nc{so}$-morphism, for $\bar{e}$ to be an $\nc{so}$-morphism as desired, it suffices to show that the induced $r\colon I'\to I$ is such. But finally this follows from the following diagram where the rows are inserters and an application of \ref{useful lemma}.
	\begin{center}
		\begin{tikzcd}
			I'\ar[r,tail,"{i'}"]\ar[d,"r"'] & FX\ar[r,shift left=1ex,"q_{(Y,S)}Ff"]\ar[r,shift right=1ex,"q_{(Y,S)}Fg"']\ar[r,phantom,"\vleq"]\ar[d,two heads,"q_{(X,R)}"'] & \overline{F}(Y,S)\ar[d,equal] \\
			I\ar[r,tail,"i"'] & \overline{F}(X,R)\ar[r,shift left=1ex,"{\overline{F}[f]}"]\ar[r,shift right=1ex,"{\overline{F}[g]}"']\ar[r,phantom,"\vleq"] & \overline{F}(Y,S)
		\end{tikzcd}
	\end{center}
\end{proof}

We now collect some basic observations on properties of the extension functor $\overline{F}\colon\ca{C}_{\rm{ex}}\to\ca{E}$. In particular, this characterizes when $\overline{F}$ is an equivalence. Below, by an \emph{order-faithful} functor $F\colon\ca{C}\to\ca{E}$ we mean one which locally reflects the order of morphisms, i.e. $Ff\leq Fg\implies f\leq g$ for all $f,g\colon X\to Y\in\ca{C}$.

\begin{prop}\label{functoriality properties of C_ex}
	Let $F\colon\ca{C}\to\ca{E}$ be a left covering functor  into an exact category $\ca{E}$, so that $\overline{F}\colon\ca{C}_{\rm{ex}}\to\ca{E}$ is defined.
	\begin{enumerate}
		\item If $F$ is fully order-faithful and $FX$ is projective for all $X\in\ca{C}$, then $\overline{F}$ is fully order-faithful.
		\item If, in addition to the conditions in (1), for every $A\in\ca{E}$ there exists an $\nc{so}$-morphism $FX\twoheadrightarrow A$, then $\overline{F}$ is an equivalence.
	\end{enumerate}
\end{prop}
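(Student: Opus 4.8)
The plan is to prove both statements by leaning on two structural facts already in hand. First, since $\overline{F}$ is regular (the preceding theorem), each comparison $q_{(X,R)}\colon FX\twoheadrightarrow\overline{F}(X,R)$ is an $\nc{so}$-morphism and the coinserter of $(Fr_0,Fr_1)$; its kernel congruence is the $(\nc{so},\nc{ff})$-image $S^{*}\rightarrowtail FX\times FX$ of $\langle Fr_0,Fr_1\rangle$, which is a congruence by \ref{left covering functors and congruences}. Second, every $FX$ is $\nc{so}$-projective, so any morphism out of some $FX$ lifts along any $\nc{so}$-morphism. Combined with the full order-faithfulness of $F$ (so that morphisms $FW\to FX$ descend uniquely to $W\to X$ in $\ca{C}$), these give a uniform ``descent'' mechanism: data in $\ca{E}$ living under the coinserters $q$ can be pulled back into $\ca{C}$. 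Throughout I write $S^{*}$ for the image congruence of $(Y,S)$, with $p\colon FS\twoheadrightarrow S^{*}$ and projections $s_0^{*},s_1^{*}$ satisfying $s_i^{*}p=Fs_i$.

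For the fullness half of (1), I would start with $\beta\colon\overline{F}(X,R)\to\overline{F}(Y,S)$ and lift $\beta\circ q_{(X,R)}\colon FX\to\overline{F}(Y,S)$ along the $\nc{so}$-morphism $q_{(Y,S)}$, using $\nc{so}$-projectivity of $FX$, to a morphism $FX\to FY$; full order-faithfulness descends it to some $f\colon X\to Y$ in $\ca{C}$. To see that $f$ represents a morphism $(X,R)\to(Y,S)$ I would note that $q_{(X,R)}\circ Fr_0\leq q_{(X,R)}\circ Fr_1$ (coinserter), whence $q_{(Y,S)}\circ F(fr_0)\leq q_{(Y,S)}\circ F(fr_1)$, so $(F(fr_0),F(fr_1))$ factors through the kernel congruence $S^{*}$ via some $w$; lifting $w$ along $p$ (now using $\nc{so}$-projectivity of $FR$) and descending along $F$ produces $\bar f\colon R\to S$ with $s_i\bar f=fr_i$, exactly the condition for $[f]$ to exist. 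Since $\beta\circ q_{(X,R)}=q_{(Y,S)}\circ Ff$ and $q_{(X,R)}$ is epic, $\overline{F}[f]=\beta$.

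For order-faithfulness, suppose $\overline{F}[f]\leq\overline{F}[g]$; precomposing with $q_{(X,R)}$ gives $q_{(Y,S)}\circ Ff\leq q_{(Y,S)}\circ Fg$, so $(Ff,Fg)$ factors through the kernel congruence $S^{*}$. Lifting that factorization along $p$ and descending along $F$ yields $\Sigma\colon X\to S$ with $s_0\Sigma=f$ and $s_1\Sigma=g$, which is precisely a witness for $[f]\preccurlyeq[g]$, i.e. $[f]\leq[g]$; antisymmetry then upgrades equality of images to equality of morphisms. For (2) I would prove essential surjectivity. Given $A\in\ca{E}$, the hypothesis furnishes an $\nc{so}$-morphism $e\colon FX\twoheadrightarrow A$; let $E\rightarrowtail FX\times FX$ be its kernel congruence, and apply the hypothesis again to get an $\nc{so}$-morphism $\pi\colon FW\twoheadrightarrow E$. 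Descending the two composites $FW\twoheadrightarrow E\rightarrowtail FX\times FX$ along $F$ gives $r_0,r_1\colon W\to X$; once $(r_0,r_1)$ is shown to be a pseudocongruence, $(X,W)$ is an object of $\ca{C}_{\rm{ex}}$ whose image congruence is exactly $E$, so $q_{(X,W)}$ and $e$ are two $\nc{so}$-morphisms out of $FX$ with the same kernel congruence $E$; as each is the coinserter of its kernel congruence, $\overline{F}(X,W)\cong A$. With full order-faithfulness from (1), this makes $\overline{F}$ an equivalence.

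The main obstacle I anticipate is precisely the verification that the descended $(r_0,r_1)$ is a genuine pseudocongruence in $\ca{C}$, since only the $\nc{so}$-image congruence $E$ in $\ca{E}$ is handed to us. Order-reflexivity should follow by the same descent trick: for $a_0\leq a_1\colon A\to X$, order-reflexivity of the congruence $E$ factors $(Fa_0,Fa_1)$ through $E$, and lifting along $\pi$ then descending yields the required factorization through $(r_0,r_1)$. Transitivity is the delicate point: starting from a weak pullback $W*W$ in $\ca{C}$, I would use that $F$ is left covering to obtain an $\nc{so}$-comparison onto the strong pullback in $\ca{E}$, feed the resulting compatible pair into the transitivity map $\tau^{E}$ of the congruence $E$, and finally lift the composite along $\pi$ and descend along $F$ to a map $\tau\colon W*W\to W$ satisfying $r_0\tau=r_0d_0$ and $r_1\tau=r_1d_1$; keeping track of which pullback legs match the conventions in \ref{left covering functors and congruences} is where the care is needed, but no genuinely new idea beyond projectivity, full order-faithfulness, and the left-covering property is required.
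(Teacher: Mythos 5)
Your proposal is correct and follows essentially the same route as the paper: order-faithfulness and fullness both via factoring through the effective kernel congruence $\tilde{S}$ of $q_{(Y,S)}$, lifting along $p_S$ by $\nc{so}$-projectivity and descending along $F$ by full order-faithfulness, and essential surjectivity by covering $A$ and then its kernel congruence by objects in the image of $F$, descending the projections, and verifying the pseudocongruence axioms exactly as you sketch. The only deviations are cosmetic (you test order-reflexivity on arbitrary pairs $a_0\leq a_1$ rather than via the weak comma $I_X$, and your appeal to left covering in the transitivity step is unnecessary --- mere existence of the comparison $F(W*W)\to FW*FW$ suffices --- but harmless).
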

\begin{proof}
(1) Consider morphisms $\begin{tikzcd}[cramped](X,R)\ar[r,shift left=0.75ex,"{[f]}"]\ar[r,shift right=0.75ex,"{[g]}"'] & (Y,S) \end{tikzcd}$ in $\ca{C}_{\rm{ex}}$ such that $\overline{F}[f]\leq\overline{F}[g]$ in $\ca{E}$. Then we have $q_{(Y,S)}\circ\overline{F}[f]\leq q_{(Y,S)}\circ\overline{F}[g]$, so there is a $u\colon FX\to\tilde{S}\in\ca{E}$ such that $\tilde{s}_0u=Ff$ and $\tilde{s}_1u=Fg$, since in the exact category $\ca{E}$, $\tilde{S}$ is the kernel congruence of its coinserter $q_{(Y,S)}$.
\begin{center}
	\begin{tikzcd}
		FR\ar[r,two heads,"p_R"]\ar[d,shift left=0.75ex,"F\bar{g}"]\ar[d,shift right=0.75ex,"F\bar{f}"'] & \tilde{R}\ar[d,shift left=0.75ex,"\tilde{g}"]\ar[d,shift right=0.75ex,"\tilde{f}"']\ar[r,shift left=0.75ex,"\tilde{r}_0"]\ar[r,shift right=0.75ex,"\tilde{r}_1"'] & FX\ar[r,two heads,"q_{(X,R)}"]\ar[d,shift left=0.75ex,"Fg"]\ar[d,shift right=0.75ex,"Ff"']\ar[dl,dashed]\ar[dll,dashed] & \overline{F}(X,R)\ar[d,shift left=0.75ex,"{\overline{F}[g]}"]\ar[d,shift right=0.75ex,"{\overline{F}[f]}"'] \\
		FS\ar[r,two heads,"p_S"'] & \tilde{S}\ar[r,shift left=0.75ex,"\tilde{s}_0"]\ar[r,shift right=0.75ex,"\tilde{s}_1"'] & FY\ar[r,two heads,"q_{(Y,S)}"'] & \overline{F}(Y,S)
	\end{tikzcd}
\end{center}
Now because $FX$ is $\nc{so}$-projective and $p_S$ is an $\nc{so}$-morphism, we obtain a $v\colon FX\to FS$ such that $p_{S}v=u$. Then $F$ being full implies the existence of a $\Sigma\colon X\to S$ with $F\Sigma=v$. In turn, we now have $F(s_{0}\Sigma)=Fs_{0}F\Sigma=\tilde{s}_{0}p_{S}v=\tilde{s}_{0}u=Ff$ and $F(s_{1}\Sigma)=Fs_{1}F\Sigma=\tilde{s}_{1}p_{S}v=\tilde{s}_{1}u=Fg$, hence by faithfulness of $F$ we obtain $s_{0}\Sigma=f$ and $s_{1}\Sigma=g$, so that $[f]\leq[g]$.

Now consider any morphism $h\colon\overline{F}(X,R)\to\overline{F}(Y,S)$ in $\ca{E}$. Since $FX$ is projective, there exists a $u\colon FX\to FY\in\ca{E}$ such that $q_{(Y,S)}u=hq_{(X,R)}$. Then we have $q_{(Y,S)}u\tilde{r}_{0}=hq_{(X,R)}\tilde{r}_0\leq hq_{(X,R)}\tilde{r}_1=q_{(Y,S)}u\tilde{r}_{1}$, which implies the existence of a unique $\tilde{u}\colon\tilde{R}\to\tilde{S}$ such that $\tilde{s}_0\tilde{u}=u\tilde{r}_0$ and $\tilde{s}_1\tilde{u}=u\tilde{r}_1$. Since $FR$ is projective, we obtain a $\overline{u}\colon FR\to FS$ with $p_{S}\overline{u}=\tilde{u}p_{R}$.
\begin{center}
\begin{tikzcd}
	FR\ar[r,two heads,"p_R"]\ar[d,dashed,"\overline{u}"'] & \tilde{R}\ar[d,dashed,"\tilde{u}"']\ar[r,shift left=0.75ex,"\tilde{r}_0"]\ar[r,shift right=0.75ex,"\tilde{r}_1"'] & FX\ar[r,two heads,"q_{(X,R)}"]\ar[d,dashed,"u"] & \overline{F}(X,R)\ar[d,"h"] \\
	FS\ar[r,two heads,"p_S"'] & \tilde{S}\ar[r,shift left=0.75ex,"\tilde{s}_0"]\ar[r,shift right=0.75ex,"\tilde{s}_1"'] & FY\ar[r,two heads,"q_{(Y,S)}"'] & \overline{F}(Y,S)
\end{tikzcd}
\end{center}

By fullness of $F$, there exist morphisms $f\colon X\to Y$ and $\overline{f}\colon R\to S$ in $\ca{C}$ such that $Ff=u$ and $F\overline{f}=\overline{u}$. Now we have that
\begin{displaymath}
	F(fr_0)=FfFr_0=u\tilde{r}_0p_{R}=\tilde{s}_0p_{S}\overline{u}=Fs_0F\overline{f}=F(s_0\overline{f})
\end{displaymath}
which by faithfulness of $F$ implies $fr_0=s_0\overline{f}$, and similarly that $fr_1=s_1\overline{f}$. That is to say that we have a morphism $[f]\colon(X,R)\to(Y,S)$ in $\ca{C}_{\rm{ex}}$, which then by construction satisfies $\overline{F}([f])=h$.
\vspace{0.3cm}

(2) It suffices to show that $\overline{F}$ is essentially surjective on objects, so consider any $A\in\ca{E}$. Take an $\nc{so}$-morphism $e\colon FX\twoheadrightarrow A$. Then consider the kernel congruence $\begin{tikzcd}
K\ar[r,shift left=0.75ex,"k_0"]\ar[r,shift right=0.75ex,"k_1"'] & FX 
\end{tikzcd}$ of $e$ and then again cover with an $\nc{so}$-morphism $q\colon FR\twoheadrightarrow K$. By fullness of $F$, there are morphisms $\begin{tikzcd}
R\ar[r,shift left=0.75ex,"r_0"]\ar[r,shift right=0.75ex,"r_1"'] & X 
\end{tikzcd}$ such that $Fr_0=k_0q$ and $Fr_1=k_1q$. We claim that this pair forms a pseudocongruence on $X$ in $\ca{C}$.

To that end, consider the following two diagrams, the first being a weak comma in $\ca{C}$ and the second a comma in $\ca{E}$.
\begin{center}
\begin{tikzcd}
	I_X\ar[r,"c_1"]\ar[d,"c_0"']\ar[dr,phantom,"\leq"] & X\ar[d,equal] \\
	X\ar[r,equal] & X
\end{tikzcd}
\qquad
\begin{tikzcd}
	I_{FX}\ar[r,"i_1"]\ar[d,"i_0"']\ar[dr,phantom,"\leq"] & FX\ar[d,equal] \\
	FX\ar[r,equal] & FX
\end{tikzcd}
\end{center}
Let $p\colon FI_X\twoheadrightarrow I_{FX}$ be the canonical comparison morphism. Since  $\begin{tikzcd}
	K\ar[r,shift left=0.75ex,"k_0"]\ar[r,shift right=0.75ex,"k_1"'] & FX 
\end{tikzcd}$ is order-reflexive, there exists a $u\colon I_{FX}\to K$ such that $k_0u=i_0$ and $k_1u=i_1$. By $FX$ being projective, there is now a $v\colon FI_X\to FR$ with $qv=up$. In turn, by $F$ being full we obtain an $n\colon I_X\to R\in\ca{C}$ such that $Fn=v$. Now we have $F(r_0n)=Fr_0 Fn=k_0qv=k_0up=i_0p=Fc_0$ and so by faithfulness of $F$ we deduce that $r_0n=c_0$, and then similarly that $r_1n=c_1$. Hence, $R$ is order-reflexive.

For transitivity, consider the following squares, where the first is a weak pullback in $\ca{C}$ and the other two are pullbacks in $\ca{E}$. 
\begin{center}
\begin{tikzcd}
	R*R\ar[r,"d_{0}^{R}"]\ar[d,"d_{1}^{R}"'] & R\ar[d,"r_1"] \\
	R\ar[r,"r_0"'] & X
\end{tikzcd}
\qquad
\begin{tikzcd}
	K*K\ar[r,"d_{0}^{K}"]\ar[d,"d_{1}^{K}"'] & K\ar[d,"k_1"] \\
	K\ar[r,"k_0"'] & FX
\end{tikzcd}
\qquad
\begin{tikzcd}
	FR*FR\ar[r,"d_{0}^{FR}"]\ar[d,"d_{1}^{FR}"'] & FR\ar[d,"Fr_1"] \\
	FR\ar[r,"Fr_0"'] & FX
\end{tikzcd}
\end{center}

Denote by $\sigma\colon FR*FR\to K*K$ the induced morphism in $\ca{E}$, i.e. the one uniquely determined by the equalities $d_{1}^{K}\sigma=qd_{1}^{FR}$ and $d_{0}^{K}\sigma=qd_{0}^{FR}$. Let $p\colon F(R*R)\to FR*FR$ be the canonical comparison morphism. Since $F(R*R)$ is projective in $\ca{E}$, there exists a $w\colon F(R*R)\to FR$ with $qw=\tau^{K}\sigma p$, where $\tau^{K}$ is the morphism exhibiting transitivity of the congruence $K$. By fullness of $F$, there is then a $\tau^{R}\colon R*R\to R\in\ca{C}$ with $F\tau^{R}=w$. Now we have that
\begin{displaymath}
	F(r_0\tau^{R})=Fr_0F\tau^{R}=k_0qw=k_0\tau^{K}\sigma p=k_0d_{0}^{K}\sigma p=k_0qd_{0}^{FR}p=Fr_0Fd_{0}^{R}=F(r_0d_{0}^{R})
\end{displaymath}
so by faithfulness of $F$ we deduce $r_0\tau^{R}=r_0d_{0}^{R}$. Similarly, we also obtain $r_1\tau^{R}=r_1d_{1}^{R}$, so that $R$ is transitive.

Finally, it is now immediate from the definition of $\overline{F}$ and the construction of $R$ that we have $\overline{F}(X,R)\cong A$
\end{proof}

Since the inclusion $\ca{P}\hookrightarrow\ca{E}$ of a projective cover $\ca{P}$ into an exact category $\ca{E}$ satisfies all the properties in the above proposition, we immediately have the following.

\begin{thm}\label{ExactCompOfProjCover}
	Let $\ca{E}$ be an exact category and suppose that $\ca{P}$ is a projective cover of $\ca{E}$. Then $\ca{P}_{\rm{ex}}\simeq\ca{E}$.
\end{thm}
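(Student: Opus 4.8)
The plan is to deduce the equivalence by applying Proposition~\ref{functoriality properties of C_ex}(2) to the inclusion functor $J\colon\ca{P}\hookrightarrow\ca{E}$, so the work reduces to checking that $J$ satisfies the hypotheses of that proposition. Three of these are essentially immediate. First, $\ca{P}$ is weakly lex: since $\ca{E}$ is exact it is in particular lex, whence Proposition~\ref{prop: weak limits in proj cover} guarantees that the projective cover $\ca{P}$ has all weak finite limits (so that $\ca{P}_{\rm{ex}}$ and the notion of left covering both make sense for $J$). Second, $J$ is fully order-faithful because $\ca{P}$ is a \emph{full} subcategory, so that $\ca{P}(X,Y)=\ca{E}(JX,JY)$ as posets. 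Third, the two remaining conditions — that each $JX$ is $\nc{so}$-projective, and that every $A\in\ca{E}$ admits an $\nc{so}$-morphism $JX\twoheadrightarrow A$ with $X\in\ca{P}$ — are precisely the two clauses in the definition of a projective cover. Hence, once we know that $J$ is left covering, Proposition~\ref{functoriality properties of C_ex}(2) gives that $\overline{J}\colon\ca{P}_{\rm{ex}}\to\ca{E}$ is an equivalence.

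The only genuinely substantive point is that $J$ is \emph{left covering}, and this is where I would focus. By Proposition~\ref{left covering wrt prod and ins} it suffices to test the condition on finite products and inserters, so fix a finite weight $W\colon\ca{I}\to\nc{Pos}$ and a diagram $D$ of one of these shapes. Because $\ca{E}$ is lex, the strong limit $L\coloneqq\{W,JD\}$ exists in $\ca{E}$ with its universal cone $W\to\ca{E}(L,D-)$. I would then recall exactly how the proof of Proposition~\ref{prop: weak limits in proj cover} produces a weak limit of $D$ in $\ca{P}$: one chooses, using that $\ca{P}$ is a projective cover, an $\nc{so}$-morphism $e\colon P\twoheadrightarrow L$ with $P\in\ca{P}$ and equips $P$ with the cone obtained by composing the universal cone of $L$ with $e$. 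For this particular weak limit the canonical comparison of Definition~\ref{defi: left covering}, $J\{W,D\}_w\to\{W,JD\}=L$, is literally $e$ itself, since $e$ is the unique map $P\to L$ through which the composite cone factors; being an $\nc{so}$-morphism, left covering holds for this choice.

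To upgrade this to \emph{every} weak limit $P'$ of $D$, as the definition formally demands, I would invoke a cancellation property of $\nc{so}$-morphisms, which form a class of the shape ${}^{\perp}(\text{order-monos})$. The weak-universal property of $P'$ applied to the cone on $P$ yields a map $g'\colon P\to P'$ over the cones, and comparing the induced maps into the strong limit $L$ gives $c'\circ Jg'=e$, where $c'\colon JP'\to L$ is the comparison for $P'$. Now for any class of the form ${}^{\perp}\ca{M}$ with $\ca{M}$ consisting of (order-)monomorphisms one has the cancellation: if a composite $c'\circ Jg'$ lies in the class, then so does its left factor $c'$; this follows by the usual diagonal-filling argument, with uniqueness of the lift against an order-mono supplying the missing triangle. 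Since $e\in\nc{so}$, we conclude $c'\in\nc{so}$, so the comparison is an $\nc{so}$-morphism for \emph{every} weak limit, and $J$ is left covering.

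The main obstacle is thus not a single hard computation but the correct assembly of earlier results, and in particular the treatment of the quantification over all weak limits inside the left-covering check; this subtlety is precisely what the cancellation property of $\nc{so}$-morphisms resolves. With left covering in hand, Proposition~\ref{functoriality properties of C_ex}(2) delivers $\ca{P}_{\rm{ex}}\simeq\ca{E}$.
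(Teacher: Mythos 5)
Your proof is correct and takes essentially the same route as the paper: the paper's own proof consists precisely of the observation that the inclusion $\ca{P}\hookrightarrow\ca{E}$ satisfies all the hypotheses of Proposition~\ref{functoriality properties of C_ex}(2). The only difference is that the paper leaves the left-covering verification implicit, and your argument for it --- identifying the comparison map with the chosen $\nc{so}$-cover of the strong limit, then transferring to an arbitrary weak limit via the cancellation property of $\nc{so}$-morphisms (valid since they form a class left orthogonal to order-monos) --- is a correct way to fill in that step.
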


Thus, the characterization of the exact completion and its relation to projectives has a precise analogue in the $\nc{Pos}$-enriched world, as long as all notions are interpreted in the appropriate enriched sense. In particular, if two exact categories have equivalent projective covers, then they are themselves equivalent.

\begin{cor}
	Let $\ca{E}$ and $\ca{F}$ be exact categories with projective covers $\ca{P}$ and $\ca{Q}$ respectively. If $\ca{P}\simeq\ca{Q}$, then $\ca{E}\simeq\ca{F}$.
\end{cor}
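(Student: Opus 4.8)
The plan is to reduce everything to \ref{ExactCompOfProjCover} together with the observation that the exact completion is invariant under equivalence of its weakly lex input. First note that, since $\ca{P}$ and $\ca{Q}$ are projective covers of the exact (hence in particular lex) categories $\ca{E}$ and $\ca{F}$, \ref{prop: weak limits in proj cover} guarantees that both are weakly lex, so that $\ca{P}_{\rm{ex}}$ and $\ca{Q}_{\rm{ex}}$ are defined. By \ref{ExactCompOfProjCover} we have $\ca{E}\simeq\ca{P}_{\rm{ex}}$ and $\ca{F}\simeq\ca{Q}_{\rm{ex}}$, so it suffices to produce an equivalence $\ca{P}_{\rm{ex}}\simeq\ca{Q}_{\rm{ex}}$.

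Fix an equivalence $G\colon\ca{P}\to\ca{Q}$ and let $\Gamma_{\ca{Q}}\colon\ca{Q}\to\ca{Q}_{\rm{ex}}$ be the canonical embedding, which exhibits $\ca{Q}$ as a projective cover of $\ca{Q}_{\rm{ex}}$. I would consider the composite $F\coloneqq\Gamma_{\ca{Q}}\circ G\colon\ca{P}\to\ca{Q}_{\rm{ex}}$ and show that it satisfies the hypotheses of \ref{functoriality properties of C_ex}(2); then $\overline{F}\colon\ca{P}_{\rm{ex}}\to\ca{Q}_{\rm{ex}}$ is an equivalence and we finish by composing the three equivalences.

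Each hypothesis is inherited from the two factors. As remarked just after \ref{functoriality properties of C_ex}, the inclusion $\Gamma_{\ca{Q}}$ of a projective cover is left covering, fully order-faithful, sends every object to an $\nc{so}$-projective, and is such that every object of $\ca{Q}_{\rm{ex}}$ admits an $\nc{so}$-morphism out of an object in its image. On the other hand $G$, being an equivalence, is fully order-faithful and essentially surjective, and preserves weak finite limits. Combining these: $F$ is fully order-faithful as a composite of such functors; $FP=\Gamma_{\ca{Q}}(GP)$ is $\nc{so}$-projective for every $P\in\ca{P}$; and since $G$ is essentially surjective, every object of $\ca{Q}_{\rm{ex}}$ is $\nc{so}$-covered by some $\Gamma_{\ca{Q}}(GP)$, i.e. by an object in the image of $F$. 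For the left covering property, given a finite weight $W$ and diagram $D$ in $\ca{P}$, the functor $G$ sends a weak limit $\{W,D\}_{w}$ to a weak limit $\{W,GD\}_{w}$ in $\ca{Q}$, and then $\Gamma_{\ca{Q}}$ being left covering makes the comparison $F\{W,D\}_{w}=\Gamma_{\ca{Q}}G\{W,D\}_{w}\to\{W,\Gamma_{\ca{Q}}GD\}=\{W,FD\}$ an $\nc{so}$-morphism.

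The only point requiring genuine care, and hence the main obstacle, is this last verification that $F$ is left covering, which hinges on the fact that an equivalence preserves weak finite limits, so that $G\{W,D\}_{w}$ really is a weak limit of $GD$ in $\ca{Q}$ and the comparison for $F$ coincides with the one for $\Gamma_{\ca{Q}}$. Once left coveringness is established the remaining conditions are formal, and \ref{functoriality properties of C_ex}(2) yields the equivalence $\overline{F}$, giving $\ca{E}\simeq\ca{P}_{\rm{ex}}\simeq\ca{Q}_{\rm{ex}}\simeq\ca{F}$.
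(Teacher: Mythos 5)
Your proposal is correct and takes essentially the same route as the paper: the corollary is there treated as an immediate consequence of Theorem \ref{ExactCompOfProjCover}, via $\ca{E}\simeq\ca{P}_{\rm{ex}}\simeq\ca{Q}_{\rm{ex}}\simeq\ca{F}$, where the middle equivalence is the invariance of the exact completion under equivalence of the weakly lex input. Your explicit verification of that invariance through \ref{functoriality properties of C_ex}(2), applied to $\Gamma_{\ca{Q}}\circ G$ (whose hypotheses hold because $\ca{Q}$ is a projective cover of the exact category $\ca{Q}_{\rm{ex}}$), is precisely the detail the paper leaves implicit.
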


\begin{ex}
	We shall have a large class of examples of categories which are exact completions by virtue of the results in the next section. However, let us also mention a couple here.
\begin{enumerate}
	\item Every set, considered as a discrete poset, is projective in $\nc{Pos}$. And in turn every poset $(X,\leq_X)$ can be covered with an obvious surjection from a discrete poset, namely $(X,=)\twoheadrightarrow(X,\leq_X)$. Thus, $\nc{Set}$ is a projective cover in $\nc{Pos}$, in fact the full subcategory of projectives, so we deduce that $\nc{Set}_{\rm{ex}}\simeq\nc{Pos}$. We note here that the exact completion of $\nc{Set}$ qua ordinary category is itself.
	
	\item Recall that a compact Hausdorff space $X$ is said to be \emph{extremally disconnected} if the closure in $X$ of every open subset remains open. By the work of Gleason \cite{GleasonProjective}, we know that these are precisely the projective objects in the ordinary category $\nc{CHaus}$ of compact Hausdorff spaces and that the latter category has enough projectives. Then it is clear that, if we consider the category $\nc{ExtrDis}$ of extremally disconnected compact Hausdorff spaces as a locally discrete subcategory of $\nc{Nach}$, i.e. by equipping every space $X\in\nc{ExtrDis}$ with the equality order, then it is a projective cover therein. Thus, we deduce that $\nc{ExtrDis}_{\rm{ex}}\simeq\nc{Nach}$. 
\end{enumerate}
\end{ex}

\section{Exact Completion and Categories of Ordered Algebras}

In this section we connect the exact completion with varieties of ordered algebras and thus answer an open question of Kurz \& Velebil from \cite{Kurz-Velebil}. More generally, we will apply the exact completion to obtain a characterization of categories which are monadic (in the enriched sense) over $\nc{Pos}$, following Vitale's approach in \cite{VitaleMonadicOverSet} for the ordinary setting. However, it is clear that in our present context we must restrict to sufficiently ``nice'' monads in order to reconcile monadicity with exactness. Indeed, as observed for example in \cite{Kurz-Velebil}, a category of the form $\nc{Pos}^{\mathbb{T}}$ may be badly behaved. In particular, such a category need not even be regular. The reason that such issues do not occur in the ordinary setting is essentially because of the Axiom of Choice in $\nc{Set}$.

The monads we will need to restrict to are those which preserve quotients of congruences. From one perspective, this is on a practical level the minimum requirement that one needs to impose so that the proof of exactness of $\nc{Pos}^{\mathbb{T}}$ works. From another point of view, it is known \cite{Kurz-Velebil,Adamek_Dostal_Velebil_2022} that ordered varieties correspond precisely to those monads which are \emph{strongly finitary}. And in turn, the latter property is equivalent to being finitary and preserving quotients of congruences. So it seems reasonable that dropping the finitary part should correspond to general (exact) categories $\nc{Pos}^{\mathbb{T}}$.

We begin with an easy observation on projectives in a category of algebras $\nc{Pos}^{\mathbb{T}}$, at least for sufficiently nice monads $\mathbb{T}$.

\begin{lem}\label{ProjectiveAlg}
	Let $\mathbb{T}=(T,\eta,\mu)$ be a monad on $\nc{Pos}$ such that $T$ preserves $\nc{so}$-morphisms (=surjections). Then:
	\begin{enumerate}
		\item For every discrete poset $X$, the free algebra $(TX,\mu_X)$ is an $\nc{so}$-projective in $\nc{Pos}^{\mathbb{T}}$.
		\item $\nc{Pos}^{\mathbb{T}}$ has enough projectives.
	\end{enumerate}
\end{lem}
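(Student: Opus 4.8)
The plan is to exploit the $\nc{Pos}$-enriched free--forgetful adjunction between $\nc{Pos}$ and $\nc{Pos}^{\mathbb{T}}$ in order to transport the question about $\nc{so}$-projectivity back to $\nc{Pos}$, where it can be settled by hand. Write $U\colon\nc{Pos}^{\mathbb{T}}\to\nc{Pos}$ for the forgetful functor and $\epsilon$ for the counit. The one structural fact I will rely on is that, under the hypothesis that $T$ preserves surjections, $\nc{Pos}^{\mathbb{T}}$ is regular and its $\nc{so}$-morphisms are exactly the morphisms whose underlying monotone map is a surjection; equivalently, $U$ both preserves and reflects $\nc{so}$-morphisms. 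This is the point at which the hypothesis on $T$ is genuinely used.

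For (1), recall that the adjunction gives a natural isomorphism of posets $\nc{Pos}^{\mathbb{T}}((TX,\mu_X),(A,a))\cong\nc{Pos}(X,UA)$. Given an $\nc{so}$-morphism $e\colon(A,a)\twoheadrightarrow(B,b)$, applying $\nc{Pos}^{\mathbb{T}}((TX,\mu_X),-)$ yields, up to this isomorphism, the map $\nc{Pos}(X,Ue)\colon\nc{Pos}(X,UA)\to\nc{Pos}(X,UB)$, and by the fact above $Ue$ is a surjection in $\nc{Pos}$. Here is where \emph{discreteness} of $X$ is essential: since $X$ carries the discrete order, a monotone map out of $X$ is merely a function on underlying sets, so any $h\colon X\to UB$ lifts pointwise along the surjection $Ue$ (using the Axiom of Choice in $\nc{Set}$) to a function $X\to UA$, which is automatically monotone. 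Hence $\nc{Pos}(X,Ue)$ is surjective, that is, an $\nc{so}$-morphism of $\nc{Pos}$, and so $(TX,\mu_X)$ is $\nc{so}$-projective.

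For (2), I will produce, for an arbitrary algebra $(A,a)$, an $\nc{so}$-cover by a free algebra on a discrete poset. Let $X_A$ be the discrete poset on the underlying set of $A$ and let $i\colon X_A\to UA$ be the identity on underlying sets, which is monotone because $X_A$ is discrete and is surjective. Consider the composite $(TX_A,\mu_{X_A})\xrightarrow{Ti}(TUA,\mu_{UA})\xrightarrow{\epsilon_{(A,a)}}(A,a)$. Its underlying map is $a\circ Ti$, and precomposing with $\eta_{X_A}$, using naturality of $\eta$ together with the unit law $a\circ\eta_{UA}=\id_{UA}$, gives $a\circ Ti\circ\eta_{X_A}=i$; since $i$ is surjective, so is $a\circ Ti$. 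Thus the composite is a morphism whose underlying map is a surjection, hence an $\nc{so}$-morphism by the structural fact above, and its domain is $\nc{so}$-projective by (1). Consequently the full subcategory of $\nc{Pos}^{\mathbb{T}}$ spanned by the free algebras on discrete posets is a projective cover, so $\nc{Pos}^{\mathbb{T}}$ has enough projectives.

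The main obstacle, and the crux of why the statement restricts to discrete posets, is that surjections in $\nc{Pos}$ do not in general split: a free algebra $(TX,\mu_X)$ on an arbitrary poset $X$ need not be projective precisely because lifting a monotone map $X\to UB$ along a surjection requires a monotone section, which may fail to exist. Discreteness removes exactly this difficulty by reducing the lifting to a set-level choice. The remaining delicate input is the identification of the $\nc{so}$-morphisms of $\nc{Pos}^{\mathbb{T}}$ with the $U$-surjections, which depends on $T$ preserving surjections and on the regularity of $\nc{Pos}^{\mathbb{T}}$.
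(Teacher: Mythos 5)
Your proposal is correct and takes essentially the same route as the paper: part (1) is the paper's argument (a choice-based pointwise lift along the surjection $Ue$, automatically monotone by discreteness of $X$) merely repackaged through the adjunction isomorphism $\nc{Pos}^{\mathbb{T}}((TX,\mu_X),-)\cong\nc{Pos}(X,U-)$, and part (2) constructs the identical cover $\epsilon_{(A,a)}\circ Fi\colon F|A|\twoheadrightarrow (A,a)$, differing only in that you verify surjectivity of its underlying map directly by precomposing with the unit, whereas the paper notes each factor ($Fp$ and $\alpha$) is an $\nc{so}$-morphism separately. Both arguments rest on the same input from Kurz--Velebil, namely that $U$ preserves and reflects $\nc{so}$-morphisms when $T$ preserves surjections.
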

\begin{proof}
	(1) Consider any morphism $f\colon (TX,\mu_X)\to(B,\beta)$ and an $\nc{so}$-morphism $q\colon(A,\alpha)\twoheadrightarrow(B,\beta)$. Since $T$ preserves $\nc{so}$-morphisms, so does the forgetful functor $U\colon\nc{Pos}^{\mathbb{T}}\to\nc{Pos}$ by \cite[Proposition 4.12]{Kurz-Velebil}, i.e. $q$ is surjective. Thus, since $X$ is discrete, any function $\bar{f}\colon X\to A$ with $q\bar{f}=f\eta_X$ is automatically order-preserving and so lives in $\nc{Pos}$. Now the induced morphism of $\mathbb{T}$-algebras $\alpha\circ T\bar{f}\colon(TX,\mu_X)\to(A,\alpha)$ satisfies
	\begin{displaymath}
		q\alpha T\bar{f}=\beta TqT\bar{f}=\beta T(q\bar{f})=\beta T(f\eta_X)=\beta TfT\eta_X=f\mu_X T\eta_X=f
	\end{displaymath}
	
	(2) For any $(A,\alpha)\in\nc{Pos}^{\mathbb{T}}$ we have the canonical morphism $\alpha\colon(TA,\mu_A)\to(A,\alpha)$ such that $U\alpha$ is a (split) surjection. Since $U\colon\nc{Pos}^{\mathbb{T}}\to\nc{Pos}$ also reflects $\nc{so}$-morphisms, again by \cite[Proposition 4.12]{Kurz-Velebil}, we have that $\alpha$ is an $\nc{so}$-morphism in $\nc{Pos}^{\mathbb{T}}$. Consider also the canonical surjection $p\colon|A|\twoheadrightarrow A$, where $|A|$ denotes the discrete poset with the same underlying set as $A$. Applying the free algebra functor $F\colon\nc{Pos}\to\nc{Pos}^{\mathbb{T}}$, which is a left adjoint, we obtain an $\nc{so}$-morphism $Fp\colon F|A|\to FA$. Then the composition $\begin{tikzcd}
		F|A|\ar[r,two heads,"Fp"] & FA\ar[r,two heads,"\alpha"] & (A,\alpha)
	\end{tikzcd}$ is an $\nc{so}$-morphism as well, while $F|A|=(T|A|,\mu_{|A|})$ is projective by part (1).
\end{proof}

If $\nc{Kl}(\mathbb{T})_{d}$ denotes the full subcategory of the Kleisli category on the algebras which are free on a discrete poset, then the above lemma shows that $\nc{Kl}(\mathbb{T})_{d}$ is a projective cover of $\nc{Pos}^{\mathbb{T}}$. When moreover $\mathbb{T}$ preserves coinserters of congruences, $\nc{Pos}^{\mathbb{T}}$ is exact and so we immediately have the following.

\begin{cor}
	Let $\mathbb{T}$ be a monad on $\nc{Pos}$ which preserves coinserters of congruences. Then $\nc{Pos}^{\mathbb{T}}\simeq(\nc{Kl}(\mathbb{T})_{d})_{\rm{ex}}$
\end{cor}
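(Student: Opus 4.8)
The plan is to deduce the statement from Theorem \ref{ExactCompOfProjCover} applied with $\ca{E}=\nc{Pos}^{\mathbb{T}}$ and $\ca{P}=\nc{Kl}(\mathbb{T})_{d}$. Thus the whole proof reduces to checking the two hypotheses of that theorem: that $\nc{Pos}^{\mathbb{T}}$ is exact, and that $\nc{Kl}(\mathbb{T})_{d}$ is a projective cover of it. Both are essentially already recorded in the material preceding the statement, so the task is mainly one of assembling the pieces, with a single genuine verification to perform.

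First I would treat the projective cover. This is precisely what Lemma \ref{ProjectiveAlg} and the remark following it deliver, \emph{except} that the lemma is stated under the hypothesis that $T$ preserves $\nc{so}$-morphisms, whereas the corollary only assumes that $\mathbb{T}$ preserves coinserters of congruences. So the one point that genuinely needs an argument is that the latter hypothesis implies the former. Since $\nc{Pos}$ is regular (being, for instance, a presheaf category $[\mathbf{1}^{\op},\nc{Pos}]$), every $\nc{so}$-morphism $q$ in $\nc{Pos}$ is the coinserter of its kernel congruence, and a kernel congruence is in particular a congruence. Applying $T$ and invoking the hypothesis, $Tq$ is again a coinserter; as every coinserter in $\nc{Pos}$ is a surjection, $Tq$ is an $\nc{so}$-morphism. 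Hence $T$ preserves $\nc{so}$-morphisms, Lemma \ref{ProjectiveAlg} applies, and $\nc{Kl}(\mathbb{T})_{d}$ is a projective cover of $\nc{Pos}^{\mathbb{T}}$.

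For the second hypothesis, exactness of $\nc{Pos}^{\mathbb{T}}$ holds precisely because $\mathbb{T}$ preserves coinserters of congruences, as discussed in the paragraphs leading up to the statement. With both hypotheses of Theorem \ref{ExactCompOfProjCover} in hand, that theorem yields at once the equivalence $(\nc{Kl}(\mathbb{T})_{d})_{\rm{ex}}\simeq\nc{Pos}^{\mathbb{T}}$. The only step that is not pure bookkeeping is the implication that preservation of coinserters of congruences forces preservation of surjections; and even this is short once one recalls that surjections in $\nc{Pos}$ are coinserters of their (effective) kernel congruences, so I expect it to be the single place in the argument meriting an explicit sentence.
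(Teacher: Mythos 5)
Your proposal is correct and follows essentially the same route as the paper, which obtains the corollary immediately by combining Lemma \ref{ProjectiveAlg} (giving the projective cover $\nc{Kl}(\mathbb{T})_{d}$), the exactness of $\nc{Pos}^{\mathbb{T}}$, and Theorem \ref{ExactCompOfProjCover}. Your one explicit verification --- that preservation of coinserters of congruences forces preservation of surjections, since every surjection in the regular category $\nc{Pos}$ is the coinserter of its kernel congruence and coinserters in $\nc{Pos}$ are surjective --- is sound and neatly closes a small gap the paper leaves implicit (its Lemma \ref{ProjectiveAlg} is stated under the a priori different hypothesis that $T$ preserves $\nc{so}$-morphisms).
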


In particular, since the monads corresponding to varieties of ordered algebras satisfy the condition above, we also obtain the following.

\begin{cor}
	If $\ca{V}$ is a variety of ordered algebras, then $\ca{V}\simeq \ca{P}_{\rm{ex}}$, where $\ca{P}$ is the full subcategory of $\ca{V}$ on the algebras which are free on a discrete poset.
\end{cor}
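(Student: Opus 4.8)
The plan is to realize $\ca{V}$ as a category of algebras $\nc{Pos}^{\mathbb{T}}$ for a suitable monad and then invoke the abstract characterization of exact completions in \ref{ExactCompOfProjCover}. First I would recall that every variety of ordered algebras in the sense of Bloom \& Wright is equivalent to $\nc{Pos}^{\mathbb{T}}$ for a \emph{strongly finitary} monad $\mathbb{T}=(T,\eta,\mu)$ on $\nc{Pos}$, by the correspondence established in \cite{Kurz-Velebil,Adamek_Dostal_Velebil_2022}. The structural input I need from this is that strong finitariness entails, in particular, that $T$ preserves coinserters of congruences (this being the ``preservation of quotients of congruences'' half of strong finitariness).

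Next I would check that this single hypothesis places us in the situation of both \ref{ProjectiveAlg} and of the preceding corollary. Since $\nc{Pos}$ is itself exact, every surjection ($\nc{so}$-morphism) there is the coinserter of its own kernel congruence; hence a functor that preserves coinserters of congruences automatically preserves surjections. In particular $T$ preserves $\nc{so}$-morphisms, so \ref{ProjectiveAlg} applies and tells us that the algebras free on a discrete poset are $\nc{so}$-projective and that they form a projective cover of $\nc{Pos}^{\mathbb{T}}$. Under the equivalence $\ca{V}\simeq\nc{Pos}^{\mathbb{T}}$ this projective cover is carried precisely onto the full subcategory $\ca{P}$ of $\ca{V}$ on the algebras which are free on a discrete poset, i.e. $\ca{P}\simeq\nc{Kl}(\mathbb{T})_{d}$.

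Finally, because $\mathbb{T}$ preserves coinserters of congruences, $\nc{Pos}^{\mathbb{T}}$ is exact, so $\ca{V}$ is an exact category admitting $\ca{P}$ as a projective cover. Applying \ref{ExactCompOfProjCover} then yields $\ca{P}_{\rm{ex}}\simeq\ca{V}$ at once; equivalently, one may simply quote the preceding corollary together with the identification $\ca{P}\simeq\nc{Kl}(\mathbb{T})_{d}$. The only genuinely substantive point is the first one, namely the identification of varieties with strongly finitary monads and the resulting preservation of coinserters of congruences; this is not proved here but imported from the cited literature, and everything after it is a direct assembly of the results of the present paper.
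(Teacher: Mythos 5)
Your proposal is correct and follows essentially the same route as the paper: the paper likewise identifies $\ca{V}$ with $\nc{Pos}^{\mathbb{T}}$ for a strongly finitary monad, notes that such monads preserve coinserters of congruences, and then invokes the preceding corollary (itself resting on \ref{ProjectiveAlg} and \ref{ExactCompOfProjCover}). The only difference is that you spell out the (correct) intermediate observation that preservation of coinserters of congruences implies preservation of surjections, which the paper leaves implicit.
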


Before the main result of this section, we will need the following easy characterization of Kleisli categories for monads on $\nc{Pos}$. This again precisely mimics the ordinary case of monads over $\nc{Set}$.

\begin{lem}\label{Kleisli characterization}
	For a category $\ca{C}$, the following are equivalent:
	\begin{enumerate}
		\item $\ca{C}\simeq\nc{Kl}(\mathbb{T})$ for a monad $\mathbb{T}$ on $\nc{Pos}$.
		\item There exists an object $G\in\ca{C}$ such that:
		\begin{itemize}
			\item $P\bullet G$ exists for all $P\in\nc{Pos}$.
			\item $(\forall C\in\ca{C})(\exists P\in\nc{Pos})\hspace{5mm}C\cong P\bullet G$
		\end{itemize}
	\end{enumerate}
\end{lem}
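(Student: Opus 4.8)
The plan is to identify the two conditions through the $\Pos$-adjunction furnished by copowers of the distinguished object $G$, the generator being (the image of) the free object on the one-element poset.

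\emph{For $(1)\Rightarrow(2)$} I would work inside $\nc{Kl}(\mathbb{T})$, whose objects are the posets and whose free $\Pos$-functor $F\colon\nc{Pos}\to\nc{Kl}(\mathbb{T})$ is the identity on objects and is a left $\Pos$-adjoint to the forgetful functor. Being a left adjoint, $F$ preserves all copowers. Setting $G\coloneqq F\mathbf{1}$ for the one-element poset $\mathbf{1}$ and using the canonical isomorphism $P\cong P\bullet\mathbf{1}$ in $\nc{Pos}$, I obtain $FP\cong F(P\bullet\mathbf{1})\cong P\bullet G$ for every poset $P$. Since every object of $\nc{Kl}(\mathbb{T})$ is of the form $FP$, every object is isomorphic to a copower of $G$, and all such copowers exist. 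Transporting $G$ across the equivalence $\ca{C}\simeq\nc{Kl}(\mathbb{T})$, which preserves copowers as a $\Pos$-equivalence, yields the required object of $\ca{C}$.

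\emph{For $(2)\Rightarrow(1)$} the copower hypothesis says exactly that $P\mapsto P\bullet G$ underlies a $\Pos$-functor $J\coloneqq(-)\bullet G\colon\nc{Pos}\to\ca{C}$ which is left $\Pos$-adjoint to $R\coloneqq\ca{C}(G,-)\colon\ca{C}\to\nc{Pos}$, the defining isomorphism $\ca{C}(P\bullet G,X)\cong\nc{Pos}(P,\ca{C}(G,X))$ being precisely the adjunction iso. This adjunction induces a $\Pos$-monad $\mathbb{T}=(T,\eta,\mu)$ on $\nc{Pos}$ with $T=RJ$, that is $TP=\ca{C}(G,P\bullet G)$. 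I then invoke the $\Pos$-enriched version of the standard fact that the Kleisli comparison $L\colon\nc{Kl}(\mathbb{T})\to\ca{C}$, determined by $L\circ F=J$, is always fully faithful, with essential image the full subcategory of $\ca{C}$ spanned by the free objects $JP=P\bullet G$. Concretely, on hom-posets $L$ realizes the chain $\nc{Kl}(\mathbb{T})(P,Q)=\nc{Pos}(P,TQ)=\nc{Pos}(P,\ca{C}(G,Q\bullet G))\cong\ca{C}(P\bullet G,Q\bullet G)$, the last step being the copower adjunction, so $L$ is $\Pos$-fully-faithful. By hypothesis $(2)$ every object of $\ca{C}$ is isomorphic to some $P\bullet G$, hence $L$ is essentially surjective; therefore $L$ is a $\Pos$-equivalence and $\ca{C}\simeq\nc{Kl}(\mathbb{T})$.

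The main obstacle is not conceptual but bookkeeping in the enriched setting: one must check that $(-)\bullet G$ genuinely assembles into a $\Pos$-functor left $\Pos$-adjoint to $\ca{C}(G,-)$, so that the induced $\mathbb{T}$ is an honest $\Pos$-monad, and that the displayed hom-poset isomorphisms are $\Pos$-natural and coincide with the action of the Kleisli comparison on morphisms. All of these are the $\Pos$-enriched analogues of familiar facts about Kleisli categories, so the real content is merely to observe that the data of condition $(2)$ furnish exactly the adjunction needed to reconstruct $\mathbb{T}$.
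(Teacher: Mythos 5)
Your proposal is correct and follows essentially the same route as the paper's proof: for $(1)\Rightarrow(2)$ take $G=F\mathbf{1}$, and for $(2)\Rightarrow(1)$ use the adjunction $-\bullet G\dashv\ca{C}(G,-)$ to induce the monad $\mathbb{T}$ and observe that the Kleisli comparison $\nc{Kl}(\mathbb{T})\to\ca{C}$ is always fully order-faithful, with essential surjectivity supplied by hypothesis $(2)$. The extra details you provide (preservation of copowers by the left adjoint, and the explicit hom-poset chain establishing full order-faithfulness of the comparison) merely fill in what the paper leaves implicit.
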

\begin{proof}
	If $\ca{C}\simeq\nc{Kl}(\mathbb{T})$, then we can simply take $G=F1$.
	
	For the converse, the existence of all $P\bullet G$ implies that there is a functor $-\bullet G\colon\nc{Pos}\to\ca{C}$, which is then by definition left adjoint to $\ca{C}(G,-)$ and hence there is an associated monad $\mathbb{T}$. The comparison functor $\nc{Kl}(\mathbb{T})\to\ca{C}$ is always fully order-faithful and then the second condition says that it is also essentially surjective on objects, hence an equivalence of $\nc{Pos}$-categories.
\end{proof}

We are now ready for the main application.

\begin{thm}\label{characterization of monadic over Pos}
	For a category $\ca{E}$, the following are equivalent:
	\begin{enumerate}
		\item $\ca{E}\simeq\nc{Pos}^{\mathbb{T}}$ for a monad $\mathbb{T}$ on $\nc{Pos}$ which preserves coinserters of congruences.
		\item $\ca{E}$ is exact and there exists an object $G\in\ca{E}$ such that:
		\begin{itemize}
			\item $P\bullet G$ exists for all $P\in\nc{Pos}$.
			\item $G$ is an $\nc{so}$-projective.
			\item For every $X\in\ca{E}$, there exists an $\nc{so}$-morphism $P\bullet G\twoheadrightarrow X$ for some $P\in\nc{Pos}$.
		\end{itemize}
	\end{enumerate}
\end{thm}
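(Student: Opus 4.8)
The plan is to prove the equivalence by exhibiting on both sides a common projective cover consisting of ``free'' objects, and then to invoke the characterization of exact completions (Theorem \ref{ExactCompOfProjCover}) together with the Kleisli description of Lemma \ref{Kleisli characterization}.

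For the direction $(1)\Rightarrow(2)$, assume $\ca{E}\simeq\nc{Pos}^{\mathbb{T}}$ with $\mathbb{T}$ preserving coinserters of congruences; then $\ca{E}$ is exact by the discussion preceding the corollary to Lemma \ref{ProjectiveAlg}. I would first note that $T$ preserves $\nc{so}$-morphisms: in the exact category $\nc{Pos}$ every surjection is the coinserter of its kernel congruence, so its image under $T$ is again a coinserter, hence an $\nc{so}$-morphism. The witnessing object is the free algebra $G\coloneqq F1=(T1,\mu_1)$. Since the left adjoint $F$ preserves copowers and $P\bullet 1\cong P$ in $\nc{Pos}$, we get $P\bullet G\cong FP=(TP,\mu_P)$, so all copowers exist; as $1$ is discrete and $T$ preserves $\nc{so}$-morphisms, $G$ is $\nc{so}$-projective by Lemma \ref{ProjectiveAlg}(1); and for any $(A,\alpha)$ the proof of Lemma \ref{ProjectiveAlg}(2) supplies an $\nc{so}$-morphism $F|A|\twoheadrightarrow(A,\alpha)$ with $F|A|\cong|A|\bullet G$, giving the required cover.

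For the direction $(2)\Rightarrow(1)$, the existence of all copowers $P\bullet G$ yields an adjunction $-\bullet G\dashv\ca{E}(G,-)\colon\ca{E}\to\nc{Pos}$ and hence a monad $\mathbb{T}$ with $T=\ca{E}(G,-\bullet G)$. I would record that $T$ preserves $\nc{so}$-morphisms: a surjection $f$ in $\nc{Pos}$ is a coinserter of a congruence, so $f\bullet G$ is an $\nc{so}$-morphism in $\ca{E}$ (the left adjoint $-\bullet G$ preserves coinserters), and then $\ca{E}(G,f\bullet G)$ is surjective because $G$ is projective. Let $\ca{P}_{d}\subseteq\ca{E}$ be the full subcategory on the objects $P\bullet G$ with $P$ \emph{discrete}. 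Each such object is $\nc{so}$-projective, since $\ca{E}(P\bullet G,-)\cong\nc{Pos}(P,\ca{E}(G,-))$ and for discrete $P$ the functor $\nc{Pos}(P,-)$ preserves surjections. Moreover $\ca{P}_{d}$ covers $\ca{E}$: given $X$, condition (2) supplies an $\nc{so}$-morphism $P\bullet G\twoheadrightarrow X$, and precomposing with the image under $-\bullet G$ of the canonical surjection $|P|\twoheadrightarrow P$ (again an $\nc{so}$-morphism) yields an $\nc{so}$-morphism out of the discrete-based $|P|\bullet G$. Thus $\ca{P}_{d}$ is a projective cover of $\ca{E}$, so $\ca{E}\simeq(\ca{P}_{d})_{\rm{ex}}$ by Theorem \ref{ExactCompOfProjCover}. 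By Lemma \ref{Kleisli characterization} the full subcategory of $\ca{E}$ on all copowers of $G$ is $\nc{Kl}(\mathbb{T})$, and this equivalence restricts to $\ca{P}_{d}\simeq\nc{Kl}(\mathbb{T})_{d}$; hence $\ca{E}\simeq(\nc{Kl}(\mathbb{T})_{d})_{\rm{ex}}$.

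The main obstacle is the remaining claim that $\mathbb{T}$ preserves coinserters of congruences; granting it, the corollary that $\nc{Pos}^{\mathbb{T}}\simeq(\nc{Kl}(\mathbb{T})_{d})_{\rm{ex}}$ for such monads finishes the proof via $\nc{Pos}^{\mathbb{T}}\simeq(\nc{Kl}(\mathbb{T})_{d})_{\rm{ex}}\simeq\ca{E}$. I would reduce this to a preservation property of $V\coloneqq\ca{E}(G,-)$. Being representable, $V$ preserves all limits, in particular kernel congruences; being $\nc{so}$-preserving (as $G$ is projective) and order-mono-preserving, it also preserves $(\nc{so},\nc{ff})$-factorizations, hence images. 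Given a congruence $E\rightrightarrows P$ in $\nc{Pos}$ with coinserter $q\colon P\to Q$, so $E=\ker q$ by exactness of $\nc{Pos}$, the left adjoint $-\bullet G$ sends it to a coinserter $E\bullet G\rightrightarrows P\bullet G\xrightarrow{q\bullet G}Q\bullet G$ in $\ca{E}$ with $q\bullet G$ an $\nc{so}$-morphism. Applying $V$ and using that it preserves limits and images, the surjection $V(q\bullet G)=Tq$ has as its kernel congruence the congruence generated in $\nc{Pos}$ by the image of $\langle Te_0,Te_1\rangle$; since $\nc{Pos}$ is exact, $Tq$ is the coinserter of its kernel congruence and therefore of $(Te_0,Te_1)$. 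The delicate point, which I expect to be the crux, is precisely this last identification: one must check that $V$ carries the congruence generated in $\ca{E}$ by the image of $E\bullet G$ to the congruence generated in $\nc{Pos}$ by the image of $V(E\bullet G)$, that is, that $V$ is compatible with the generation of congruences out of the finite-limit and image data defining order-reflexive and transitive closures.
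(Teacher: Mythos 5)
Your proposal follows the paper's proof step for step in its overall architecture: the same treatment of (1)$\Rightarrow$(2) via Lemma \ref{ProjectiveAlg} (the paper simply defers this direction to Kurz--Velebil), and for (2)$\Rightarrow$(1) the same chain --- the adjunction $-\bullet G\dashv\ca{E}(G,-)$, the projective cover $\ca{P}_{d}$ on discrete copowers, Theorem \ref{ExactCompOfProjCover}, Lemma \ref{Kleisli characterization}, and the equivalences $\ca{E}\simeq(\ca{P}_{d})_{\rm{ex}}\simeq(\nc{Kl}(\mathbb{T})_{d})_{\rm{ex}}$. Those steps are all correct. The genuine gap is the one you flag and then leave open: you never prove that the induced monad $\mathbb{T}$ preserves coinserters of congruences. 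This cannot be postponed, because it is literally part of statement (1), and it is also the hypothesis of the corollary $\nc{Pos}^{\mathbb{T}}\simeq(\nc{Kl}(\mathbb{T})_{d})_{\rm{ex}}$ that your final chain of equivalences invokes. As written, your argument establishes $\ca{E}\simeq(\nc{Kl}(\mathbb{T})_{d})_{\rm{ex}}$ but not (1).

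The paper does supply an argument for this claim, and its key move is exactly the idea missing from your write-up: it never works with \emph{generated} congruences at all. The paper reduces to showing that $V=\ca{E}(G,-)$ preserves coinserters of congruences \emph{in $\ca{E}$}, and proves this in three lines: if $(r_0,r_1)$ is a congruence in $\ca{E}$ with coinserter $c$, then by exactness of $\ca{E}$ it already \emph{is} the kernel congruence of $c$; $V$ preserves kernel congruences because it is representable, and sends $c$ to a surjection because $G$ is projective; and in the regular category $\nc{Pos}$ every surjection is the coinserter of its kernel congruence. Applying this to the congruence $\ker(q\bullet G)$ --- whose coinserter is again $q\bullet G$, since $q\bullet G$ is a coinserter, hence an $\nc{so}$-morphism, in the regular category $\ca{E}$ --- one concludes that $Tq=V(q\bullet G)$ is the coinserter of its kernel congruence $V(\ker(q\bullet G))=\ker(Tq)$. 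So exactness of $\ca{E}$ replaces your ``congruence generated by the image of $E\bullet G$'' by a kernel congruence, which $V$ preserves on the nose; no closure or image-generation process has to be commuted with $V$. I should note, in fairness to your instinct, that the paper treats the remaining passage --- from ``$Tq$ is the coinserter of $\ker(Tq)$'' to ``$Tq$ is the coinserter of the pair $(Te_0,Te_1)$'' --- as immediate (this is the force of its phrase ``it suffices of course to show that $\ca{E}(G,-)$ preserves them''), and that residue is precisely where your delicate point lives: one must still see that a morphism coinserting $(Te_0,Te_1)$ coinserts $\ker(Tq)$. But measured against the paper's own proof, what your proposal lacks is the exactness-plus-projectivity lemma above, which constitutes the entirety of the paper's verification of the monad condition.
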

\begin{proof}

The implication (1)$\implies$(2) is more or less contained in \cite{Kurz-Velebil}, so we only prove the converse.

Let $\ca{C}$ be the full subcategory of $\ca{E}$ spanned by all objects of the form $P\bullet G$, for any $P\in\nc{Pos}$. Then we have $\ca{C}\simeq\nc{Kl}(\mathbb{T})$ for a monad $\mathbb{T}$ on $\nc{Pos}$, namely the one induced by the adjunction $-\bullet G\dashv\ca{E}(G,-)$. We will show that this monad preserves coinserters of congruences, for which it suffices of course to show that $\ca{E}(G,-)$ preserves them. 

So consider a coinserter $\begin{tikzcd}
	R\ar[r,shift left=0.75ex,"r_0"]\ar[r,shift right=0.75ex,"r_1"'] & X\ar[r,two heads,"q"] & Y
\end{tikzcd}$ in $\ca{E}$, where $(r_0,r_1)$ is a congruence. Since $\ca{E}$ is exact, $(r_0,r_1)$ must in fact be the kernel congruence of its coinserter $q$. Since $\ca{E}(G,-)$ preserves limits, in the diagram $\begin{tikzcd}
\ca{E}(G,R)\ar[r,shift left=0.75ex,"r_0\circ-"]\ar[r,shift right=0.75ex,"r_1\circ-"'] & \ca{E}(G,X)\ar[r,two heads,"q\circ-"] & \ca{E}(G,Y)
\end{tikzcd}$ in $\nc{Pos}$, $(r_0\circ-,r_1\circ-)$ is the kernel congruence of $q\circ-$. But $G$ is projective, so $\ca{E}(G,-)$ also preserves $\nc{so}$-morphisms, whence $q\circ-$ is also an $\nc{so}$-morphism in $\nc{Pos}$. Thus, $q\circ-$ must be the coinserter of its kernel congruence.

Now let $\ca{C}_d$ be the full subcategory of $\ca{C}$ on the copowers $I\bullet G$ with $I$ a discrete poset, i.e. on the coproducts $\bigsqcup_{I}G$. The equivalence $\ca{C}\simeq\nc{Kl}(\mathbb{T})$ clearly restricts to an equivalence $\ca{C}_{d}\simeq\nc{Kl}(\mathbb{T})_{d}$. Then $G$ being projective implies that every object of $\ca{C}_{d}$ is as well. Now $\ca{C}_{d}$ is a projective cover of $\ca{E}$: indeed, for any $P\in\nc{Pos}$ there is a canonical $\nc{so}$-morphism $|P|\twoheadrightarrow P$ with $|P|$ discrete. Thus, for any object $X\in\ca{E}$ we can find a composite $\nc{so}$-morphism $|P|\bullet G\twoheadrightarrow P\bullet G\twoheadrightarrow X$ (since $-\bullet G$ is left adjoint). In addition, $\nc{Kl}(\mathbb{T})_d$ is a projective cover of $\nc{Pos}^{\mathbb{T}}$ by \ref{ProjectiveAlg}. Thus, by \ref{ExactCompOfProjCover} we conclude that $\ca{C}_{d}\simeq\nc{Kl}(\mathbb{T})_{d}\implies(\ca{C}_{d})_{ex}\simeq (\nc{Kl}(\mathbb{T})_{d})_{ex}\implies \ca{E}\simeq\nc{Pos}^{\mathbb{T}}$. 
\end{proof}

\begin{rmk}
	From \ref{characterization of monadic over Pos} we can, for example, recover the fact that the category $\nc{Nach}$ of Nachbin's compact ordered spaces is monadic over $\nc{Pos}$, which is originally due to Flagg \cite{Flagg1997algebraic}. Indeed, the one element space $G= \mathfrak{1}$ is easily seen to satisfy the conditions of the theorem, while we know that $\nc{Nach}$ is exact from \cite[Proposition 5.8]{MyPosExReg}.
\end{rmk}

\begin{rmk}
	As is clear from the proof of \ref{characterization of monadic over Pos}, in the third condition of (2) we can in fact assume that every $X\in\ca{E}$ is covered by a copower $P\bullet G$ with $P$ discrete, i.e. by a coproduct of copies of $G$. Such an object is then furthermore projective, while an arbitrary $P\bullet G$ of course need not be.
\end{rmk}

\bibliographystyle{alpha}
\bibliography{myReferences}

@misc{barr1971exact,
  title={Exact categories and categories of sheaves, vol. 236 of {L}ecture {N}otes in {M}athematics},
  author={Barr, M. and Grillet, P. A. and Van Osdol, D. H.},
  year={1971},
  publisher={Springer-Verlag}
}

@article{Kurz-Velebil,
  title={Quasivarieties and varieties of ordered algebras: regularity and exactness},
  author={Kurz, A. and Velebil, J.},
  journal={Mathematical Structures in Computer Science},
  volume={27},
  number={7},
  pages={1153--1194},
  year={2017},
  publisher={Cambridge University Press}
}

@article{PerugiaNotes,
  title={Teoria delle categorie sopra un topos di base},
  author={Lawvere, F. W.},
  journal={Lecture Notes, Univ. of Perugia},
  year={1973}
}

@article{CarboniCelia-Magno,
  title={The free exact category on a left exact one},
  author={Carboni, A. and Celia Magno, R.},
  journal={Journal of the Australian Mathematical Society},
  volume={33},
  number={3},
  pages={295--301},
  year={1982},
  publisher={Cambridge University Press}
}

@article{Carboni-Vitale,
  title={Regular and exact completions},
  author={Carboni, A. and Vitale, E. M.},
  journal={Journal of Pure and Applied Algebra},
  volume={125},
  number={1-3},
  pages={79--116},
  year={1998},
  publisher={Elsevier}
}

@article{FiniteLimitsEnriched,
  title={Structures defined by finite limits in the enriched context, {I}},
  author={Kelly, G. M.},
  journal={Cahiers de Topologie et G{\'e}om{\'e}trie Diff{\'e}rentielle Cat{\'e}goriques},
  volume={23},
  number={1},
  pages={3--42},
  year={1982}
}

@article{VitaleMonadicOverSet,
  title={On the characterization of monadic categories over {S}et},
  author={Vitale, E. M.},
  journal={Cahiers de Topologie et G{\'e}om{\'e}trie Diff{\'e}rentielle Cat{\'e}goriques},
  volume={35},
  number={4},
  pages={351--358},
  year={1994}
}

@phdthesis{VitaleThesis,
  title={Left covering functors},
  author={Vitale, E. M.},
  journal={Ph. D. thesis},
  school={Universit{\'e} Catholique de Louvain},
  year={1994}
}

@article{SucciCruciani,
  title={La teoria delle relazioni nello studio di categorie regolari e di categorie esatte. Riv},
  author={Succi Cruciani, R.},
  journal={Mat. Univ. Parma (4)},
  volume={1},
  number={143-158},
  pages={2},
  year={1975}
}

@article{2dimSheafTheory,
  title={Two-dimensional sheaf theory},
  author={Street, R.},
  journal={Journal of Pure and Applied Algebra},
  volume={23},
  number={3},
  pages={251--270},
  year={1982},
  publisher={Elsevier}
}

@article{BloomWright,
  title={P-varieties: a signature independent characterization of varieties of ordered algebras},
  author={Bloom, S.  and Wright, J.},
  journal={Journal of Pure and Applied Algebra},
  volume={29},
  number={1},
  pages={13--58},
  year={1983},
  publisher={Elsevier}
}

@article{MyPosExReg,
  title={The exact completion for regular categories enriched in posets},
  author={Aravantinos-Sotiropoulos, V.},
  journal={Journal of Pure and Applied Algebra },
  volume={226},
  issue={7},
  year={2022}
}

@phdthesis{MyThesis,
  title={Aspects of Regular and Exact Completions.},
  author={Aravantinos-Sotiropoulos, V.},
  school={Universit{\'e} Catholique de Louvain},
  year={2021}
}

@article{Adamek_Dostal_Velebil_2022, 
title={A categorical view of varieties of ordered algebras}, 
volume={32},  
number={4}, 
journal={Mathematical Structures in Computer Science}, 
author={Ad{\'a}mek, J. and Dost{\'a}l, M. and Velebil, J.}, 
year={2022}, 
pages={349–373}
}

@article{Adamek2023varieties,
  title={Varieties of ordered algebras as categories},
  author={Ad{\'a}mek, J. and Rosick{\`y}, J.},
  journal={Algebra Universalis},
  volume={84},
  number={2},
  pages={9},
  year={2023},
  publisher={Springer}
}

@article{Adamek2024TAC,
  title={Categories which are varieties of classical or ordered algebras},
  author={Ad{\'a}mek, J.},
  vol={43},
  journal={Theory and Applications of Categories},
 number={3},
 pages={39--67},
  year={2025}
}

@article{Flagg1997algebraic,
  title={Algebraic theories of compact pospaces},
  author={Flagg, B.},
  journal={Topology and its Applications},
  volume={77},
  number={3},
  pages={277--290},
  year={1997},
  publisher={Elsevier}
}

@article{LackRosickyWeakness,
title = {Enriched weakness},
author={Lack, S. and Rosick{\`y}, J.},
journal = {Journal of Pure and Applied Algebra},
volume = {216},
number = {8},
pages = {1807-1822},
year = {2012},
}

@article{NotionsOfFlatness,
  title={Notions of flatness relative to a Grothendieck topology},
  author={Karazeris, P.},
  journal={Theory and Applications of Categories},
  volume={11},
  number={5},
  pages={225--236},
  year={2004}
}

@article{tendas2024flatness,
  title={Flatness, weakly lex colimits, and free exact completions},
  author={Tendas, G.},
  journal={Annali di Matematica Pura ed Applicata (1923-)},
  volume={203},
  number={2},
  pages={823--856},
  year={2024},
  publisher={Springer}
}

@article{GleasonProjective,
  title={Projective topological spaces},
  author={Gleason, A. M.},
  journal={Illinois Journal of Mathematics},
  volume={2},
  number={4A},
  pages={482--489},
  year={1958},
  publisher={Duke University Press}
}

@article{OrdMaltsev,
author = {Clementino, M. M. and Rodelo, D.},
title = {Enriched aspects of calculus of relations and 2-permutability},
journal = {Journal of Algebra and Its Applications},
volume = {0},
number = {0},
pages = {2650233},
year = {0},
doi = {10.1142/S0219498826502336},
URL = {https://doi.org/10.1142/S0219498826502336},
eprint = { https://doi.org/10.1142/S0219498826502336},
}
	
\end{document}